\date{June 26, 2017}
\let\oldsection\section
\renewcommand\section{\setcounter{equation}{0}\oldsection}
\newtheorem{corollary}{Corollary}[section]
\newtheorem{theorem}{Theorem}[section]
\newtheorem{lemma}{Lemma}[section]
\newtheorem{proposition}{Proposition}[section]
\newtheorem{definition}{Definition}[section]
\newtheorem{remark}{Remark}[section]
\begin{document}

\title[The primitive equations as the small aspect ratio limit]{The primitive equations as the small aspect ratio limit of the Navier-Stokes equations: rigorous justification of the hydrostatic approximation}

\author{Jinkai~Li}
\address[Jinkai~Li]{Department of Mathematics, The Chinese University of Hong Kong, Hong Kong, P.R.China}
\email{jklimath@gmail.com}

\author{Edriss~S.~Titi}
\address[Edriss~S.~Titi]{
Department of Mathematics, Texas A\&M University, 3368--TAMU, College Station, TX 77843-3368, USA. ALSO, Department of Computer Science and Applied Mathematics, Weizmann Institute of Science, Rehovot 76100, Israel.}
\email{titi@math.tamu.edu and edriss.titi@weizmann.ac.il}

\keywords{Small aspect ratio limit; anisotropic Navier-Stokes equations; primitive equations; hydrostatic approximation (balance).}
\subjclass[2010]{35Q30, 35Q86, 76D05, 86A05, 86A10.}


\begin{abstract}
An important feature of the planetary oceanic
dynamics is that the aspect ratio (the ratio of the depth to horizontal width) is very small.
As a result, the hydrostatic
approximation (balance), derived by performing the formal small aspect ratio limit
to the Navier-Stokes equations, is considered as a fundamental
component in the primitive equations of the
large-scale ocean.
In this paper, we justify rigorously
the small aspect ratio limit of the Navier-Stokes equations to
the primitive equations. Specifically, we prove that the
Navier-Stokes equations, after being scaled appropriately by the small aspect ratio parameter of the physical domain, converge strongly to the primitive
equations, globally and uniformly in time, and the convergence rate is of the
same order as the aspect ratio parameter. This result
validates the hydrostatic
approximation for the large-scale
oceanic dynamics. Notably, only the weak convergence of this small aspect ratio
limit was rigorously justified before.
\end{abstract}

\maketitle


\allowdisplaybreaks
\section{Introduction}
\label{sec1}
In the context of the geophysical flow concerning the large-scale
oceanic dynamics, the ratio of the depth to the horizontal width is very small.
With the aid of this fact, by scaling the incompressible Navier-Stokes
equations with respect to the aspect ratio parameter
and taking the small aspect
ratio limit, one obtains formally the primitive equations for the large-scale
oceanic dynamics. The primitive equations are nothing but the Navier-Stokes
equations in which the vertical momentum equation is being replaced
by the hydrostatic
approximation (balance). Due to its high accuracy for the large-scale oceanic dynamics,
the hydrostatic approximation
forms a fundamental component in the primitive equations.

The rigorous mathematical justification of the small aspect ratio limit
from the Navier-Stokes equations to the primitive equations was studied
before by Az\'erad--Guill\'en \cite{AZGU}, in which the weak
convergence was established. Since only the weak convergence was obtained in
\cite{AZGU}, no convergence rate was provided. The aim of this paper
is to show the strong convergence from the Navier-Stokes equations to
the primitive equations, as the aspect ratio parameter
goes to zero. Moreover, it will be
shown, the strong convergence is actually global and uniform
in time, and that the
convergence rate is of the same order as the aspect ratio parameter.

Let's consider the anisotropic Navier-Stokes equations
$$
\partial_tu+(u\cdot\nabla)u-\mu\Delta_Hu-\nu\partial_z^2u+\nabla p=0,
$$
in the $\varepsilon$-dependent domain
$\Omega_\varepsilon:=M\times(-\varepsilon,\varepsilon)$, where $\varepsilon>0$ is a very small parameter, and
$M=(0,L_1)\times(0,L_2)$, for two positive constants $L_1$ and $L_2$
of order $O(1)$ with respect to $\varepsilon$.
Here the vector field $u=(v, w)$, with $v=(v_1, v_2)$, is the velocity,
and the scalar function
$p$ is the pressure. Similar to the
case considered in Az\'erad--Guill\'en
\cite{AZGU}, we suppose that the horizontal viscous coefficient
$\mu$ and the vertical viscous coefficient $\nu$ have different orders,
that is $\mu=O(1)$ and $\nu=O(\varepsilon^2)$. We suppose, for
simplicity, that $\mu=1$ and $\nu=\varepsilon^2$. Note that it is
necessary to consider the above anisotropic viscosities scaling
in the horizontal and vertical
directions, so that the
Navier-Stokes equations converge to the primitive equations,
as the aspect ratio $\varepsilon$ goes to zero.
In fact, for the case when $(\mu,\nu)=O(1)$, it has been
shown in Bresh--Lemoine--Simon \cite{BRLESI} that the stationary Navier-Stokes equations
converge to a linear system with only vertical dissipation.

We first transform the above anisotropic Navier-Stokes equations,
defined on the $\varepsilon$-dependent domain $\Omega_\varepsilon$,
to a scaled Navier-Stokes equations defined on a fixed domain. To this end, we introduce the new unknowns
\begin{eqnarray*}
  &&u_\varepsilon=(v_\varepsilon, w_\varepsilon),\quad
  v_\varepsilon(x,y,z,t)=v(x,y,\varepsilon z,t),\\
  &&w_\varepsilon(x,y,z,t)=\frac{1}{\varepsilon}w(x,y,\varepsilon z,t),\quad
  p_\varepsilon(x,y,z,t)=p(x,y,\varepsilon z,t),
\end{eqnarray*}
for any $(x,y,z)\in \Omega:=M\times(-1,1)$, and for any $t\in(0,\infty)$.
Then, $u_\varepsilon=(v_\varepsilon, w_\varepsilon)$ and $p_\varepsilon$ satisfy
the following scaled Navier-Stokes equations (SNS)
\begin{equation}\label{SNS}
(SNS)~~\left\{
\begin{array}{l}
  \partial_t v_\varepsilon+(u_\varepsilon\cdot\nabla)v_\varepsilon -\Delta v_\varepsilon+\nabla_Hp_\varepsilon=0,\\
  \nabla_H\cdot v_\varepsilon+\partial_zw_\varepsilon=0,\\
  \varepsilon^2(\partial_tw_\varepsilon+u_\varepsilon\cdot\nabla w_\varepsilon-\Delta w_\varepsilon)+\partial_zp_\varepsilon=0,
\end{array}
\right.
\end{equation}
defined in the fixed domain $\Omega$. In addition, we consider
the periodic boundary value problem to (SNS), and thus, complement it
with the boundary and initial conditions
\begin{eqnarray}
  &v_\varepsilon, w_\varepsilon \mbox{ and }p_\varepsilon\mbox{ are periodic in }x,y,z,\label{bc}\\
  &(v_\varepsilon, w_\varepsilon)|_{t=0}=(v_0, w_0), \label{ic}
\end{eqnarray}
where $(v_0, w_0)$ is given.
Furthermore, for simplicity, we suppose in addition that the following symmetry condition holds
\begin{equation}\label{sc}
  v_\varepsilon, w_\varepsilon\mbox{ and }p_\varepsilon \mbox{ are even, odd and odd with respect to }z,\mbox{ respectively}.
\end{equation}
Note that this symmetry condition is preserved by the dynamics of
(SNS), in other words,
it is automatically satisfied as long as it is
satisfied initially. For this reason, throughout this paper, we always
suppose, without any further mention, that the initial horizontal velocity $v_0$ satisfies
\begin{equation*}
v_0\mbox{ is periodic in }x,y,z,\mbox{ and is even in }z. \label{sc0}
\end{equation*}

Throughout this paper, we used $\nabla_H$ and $\Delta_H$ to denote
the horizontal gradient and horizontal Laplacian, respectively,
that is $\nabla_H=(\partial_x,\partial_y)$ and $\Delta_H=\partial_x^2+\partial_y^2$. For $1\leq q\leq\infty$, and positive integer $k$, we denote by $L^q(\Omega)$ and $H^k(\Omega)$, respectively,
the standard Lebessgue and Sobolev spaces equipped with the standard norms. We use $L^2_\sigma(\Omega)$
to denote the space consisting of all divergence-free functions in $L^2(\Omega)$. Note that since we
consider the periodic boundary problems, all the functions considered in this paper are supposed to be periodic in the
spatial variables. For simplicity, we use the notation $\|\cdot\|_q$ and $\|\cdot\|_{q,M}$ to denote the $L^q(\Omega)$ and
$L^q(M)$ norms, respectively. Also, we will use the same notation to
denote both a space itself and its finite product spaces.

Following the same arguments as those
for the standard Navier-Stokes equations,
see, e.g., Constantin--Foias \cite{CONFO} and Temam \cite{TEMAMNS},
one can prove that, for any initial data $u_0=(v_0, w_0)\in L^2(\Omega)$, with $\nabla\cdot u_0=0$,
there is a global weak solution
$u$ to the scaled Navier-Stokes equations (\ref{SNS}), subject to the boundary and initial conditions (\ref{bc})--(\ref{ic}),
and if moreover, the initial data $u_0\in H^1(\Omega)$, it has a unique local in time strong solution, where the weak
solutions are defined as:

\begin{definition}\label{def}
Given $u_0=(v_0,w_0)\in L^2(\Omega)$, with $\nabla\cdot u_0=0$. A space periodic function $u$ is called a Leray-Hopf weak solution to (SNS), subject to (\ref{bc})--(\ref{ic}), if

  (i) it has the regularity that
  $$
  u\in C_w([0,\infty); L^2_\sigma(\Omega))\cap L^2_{\text{loc}}([0,\infty), H^1(\Omega)),
  $$
  where the subscript $w$ means weakly continuous,

  (ii) satisfies the energy inequality
  $$
  \|v(t)\|_2^2+\varepsilon^2\|w(t)\|_2^2+2\int_0^t(\|\nabla v\|_2^2 +\varepsilon^2\|\nabla w\|_2^2)ds\leq\|v_0\|_2^2+\varepsilon^2\|w_0\|_2^2,
  $$
  for a.e. $t\in[0,\infty)$,

  (iii) and the following integral identity holds
  \begin{align*}
    \int_Q&[-(v\cdot\partial_t\varphi_H+\varepsilon^2 w\partial_t\varphi_3)+((u\cdot\nabla)v \cdot\varphi_H+\varepsilon^2 u\cdot\nabla w\varphi_3)+\nabla v:\nabla\varphi_H\\
    &+ \varepsilon^2\nabla w\cdot\nabla\varphi_3]dxdydzdt=\int_\Omega(v_0\cdot\varphi_H(\cdot,0) +\varepsilon^2 w_0\varphi_3(\cdot,0))dxdydz,
  \end{align*}
  for any spatially periodic function $\varphi=(\varphi_H,\varphi_3)$, with $\varphi_H=(\varphi_1,\varphi_2)$, such that $\nabla\cdot\varphi=0$ and $\varphi\in C_0^\infty(\overline\Omega\times[0,\infty))$, where $Q:=\Omega\times(0,\infty)$.
\end{definition}

Formally, by taking the limit $\varepsilon\rightarrow0$ in (SNS), one obtains the following primitive equations (PEs)
\begin{equation}\label{PE}
(PEs)~\left\{
\begin{array}{l}
  \partial_tv+(u\cdot\nabla)v-\Delta v+\nabla_Hp=0,\\
  \nabla_H\cdot v+\partial_zw=0,\\
  \partial_zp=0,
\end{array}
\right.
\end{equation}
where $u=(v,w)$.

The primitive equations form a corner stone in many global circulation
models (GCM) and are used as the fundamental models for the weather
prediction, see, e.g., Haltiner--Williams \cite{HAWI}, Lewandowski \cite{LEWAN}, Majda \cite{MAJDA},
Pedlosky \cite{PED}, Vallis \cite{VALLIS}, Washington--Parkinson \cite{WP}, and Zeng \cite{ZENG}.
During the last three
decades, since the works Lions--Temam--Wang \cite{LTW92A,LTW92B,LTW95} in the 1990s, the primitive equations have been the subject of very
intensive mathematical research.
The current state of art concerning the primitive equations is that
they have global weak solutions (but the general uniqueness is still unclear except
for some special cases \cite{BGMR03,TACHIM,KPRZ,LITITIUNIQ}),
see Lions--Temam--Wang \cite{LTW92A,LTW92B,LTW95}, and have a unique
global strong solution, see Cao--Titi \cite{CAOTITI07}, Kukavica--Ziane \cite{KUZI1,KUZI2}
and Kobelkov \cite{KOB}, and see Hieber--Kashiwabara \cite{HIEKAS} and
Hieber--Hussien--Kashiwabara \cite{HIEHUSKAS} for some generalizations
in the $L^p$ settings. Some recent developments concerning the global
strong solutions to the primitive equations towards the direction of partial dissipation cases are made by Cao--Titi \cite{CAOTITI12} and Cao--Li--Titi \cite{CAOLITITI1,CAOLITITI2,CAOLITITI3,CAOLITITI4,CAOLITITI5}. Notably,
the works \cite{CAOLITITI3,CAOLITITI4,CAOLITITI5} show
that the horizontal viscosity turns out
to be more crucial than the vertical one for the global well-posedness,
because the results there show that
the merely horizontal viscosity is sufficient to guarantee the global
well-posedness of strong solutions to the primitive equations,
see Li--Titi \cite{LITITITCM,LITITITCMMOISTURE} for some related results and also a recent survey paper by Li--Titi \cite{LITITISURVEY} for more information. However, the invicid primitive
equations may develop finite time singularities, see Cao et al.\,\cite{CINT} and Wong \cite{WONG}.

Despite the important fact that the hydrostatic approximation plays a
crucial role in
the primitive equations, to the best of our knowledge, the only
known mathematical justification of its derivation, via the small aspect
ratio limit, is done in \cite{AZGU}, where only the weak convergence
is proved, and no convergence rate can be deduced there. Historically,
the most possible reason that only the weak convergence can be
established
in \cite{AZGU} is that the global existence of strong solutions to
the primitive equations was still an open question at that time. As it will be
seen below in the proof of our results, the global well-posedness of
strong solutions to the primitive equations plays a fundamental
role in the strong convergence of the Navier-Stokes equations to the
primitive equations.
The aim of this paper is to rigorously justify the strong convergence from (SNS) to (PEs), subject to the same
boundary and initial conditions (\ref{bc})--(\ref{sc}).

Before stating our main results, it is necessary to clarify some statements on the initial data $u_0=(v_0, w_0)$.
Recall that the solutions considered in this paper satisfy the symmetry condition (\ref{sc}),
so does the initial datum $u_0=(v_0,w_0)$. Since $w_0$ is odd in $z$, one has $w_0|_{z=0}=0$. Thus,
it follows from the incompressibility condition that $w_0$ can be uniquely determined as
\begin{equation}\label{ne00}
w_0(x,y,z)=-\int_0^z\nabla_H\cdot v_0(x,y,z')dz',
\end{equation}
for any $(x,y)\in M$ and $z\in(-1,1)$.
Due to this fact, throughout this paper, concerning the initial
velocity $u_0$, we only need to specify the horizontal components $v_0$, while the vertical component
$w_0$ is uniquely determined in terms of $v_0$ through (\ref{ne00}).
For this reason, we use, in this paper, both the statements ``initial data $u_0$" and ``initial data $v_0$", with (\ref{ne00}) is assumed
for the latter case.

Now, we are ready to state our main results. In case that the initial data $v_0\in H^1(\Omega)$, one can not generally
expect that $w_0$, determined by (\ref{ne00}), belongs to $H^1(\Omega)$. Instead, one should consider
$u_0=(v_0,w_0)$ as
a function in $L^2(\Omega)$, and thus can obtain a global weak solution $(v_\varepsilon, w_\varepsilon)$ to (SNS), subject
to (\ref{bc})--(\ref{sc}). For this case, we have the following theorem concerning the strong convergence:

\begin{theorem}\label{thm0}
Given a periodic function $v_0\in H^1(\Omega)$, such that
$$
\nabla_H\cdot\left(\int_{-1}^1v_0(x,y,z)dz\right)=0,\quad\int_\Omega v_0(x,y,z) dxdydz=0.
$$
Let $(v_\varepsilon,
w_\varepsilon)$ and $(v,w)$, respectively, be an arbitrary Leray-Hopf weak solution to (SNS) and the unique global strong solution to (PEs), subject to (\ref{bc})--(\ref{sc}). Denote by
$$
(V_\varepsilon, W_\varepsilon)=(v_\varepsilon-v, w_\varepsilon-w).
$$

Then, we have the a priori estimate
\begin{align*}
  \sup_{0\leq t<\infty}\|(V_\varepsilon,\varepsilon W_\varepsilon)\|_2^2(t)  +\int_0^{\infty}\|\nabla( V_\varepsilon,\varepsilon W_\varepsilon) \|_2^2(t)dt
  \leq C\varepsilon^2(\|v_0\|_2^2+\varepsilon^2\|w_0\|_2^2+1)^2,
\end{align*}
for any $\varepsilon\in(0,\infty)$, where $C$ is a positive constant depending only on $\|v_0\|_{H^1}$, $L_1$, and $L_2$. As a consequence, we have the following strong convergences
\begin{eqnarray*}
  &(v_\varepsilon,\varepsilon w_\varepsilon)\rightarrow (v,0),\mbox{ in }L^\infty(0,\infty; L^2(\Omega)),\\
  &(\nabla v_\varepsilon, \varepsilon\nabla w_\varepsilon,w_\varepsilon)\rightarrow(\nabla v,0,w),\mbox{ in }L^2(0,\infty;L^2(\Omega)),
\end{eqnarray*}
and the convergence rate is of the order $O(\varepsilon)$.
\end{theorem}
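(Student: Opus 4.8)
The plan is a relative--energy (weak--strong) estimate for the difference $(V_\varepsilon,W_\varepsilon)=(v_\varepsilon-v,\,w_\varepsilon-w)$. Writing $U_\varepsilon=(V_\varepsilon,W_\varepsilon)$ and using $(u_\varepsilon\cdot\nabla)v_\varepsilon-(u\cdot\nabla)v=(u_\varepsilon\cdot\nabla)V_\varepsilon+(U_\varepsilon\cdot\nabla)v$, subtracting (PEs) from (SNS) gives, formally,
\begin{align*}
&\partial_tV_\varepsilon+(u_\varepsilon\cdot\nabla)V_\varepsilon+(U_\varepsilon\cdot\nabla)v-\Delta V_\varepsilon+\nabla_H(p_\varepsilon-p)=0,\\
&\nabla_H\cdot V_\varepsilon+\partial_zW_\varepsilon=0,\\
&\varepsilon^2\bigl(\partial_tW_\varepsilon+(u_\varepsilon\cdot\nabla)W_\varepsilon-\Delta W_\varepsilon\bigr)+\partial_z(p_\varepsilon-p)=-\varepsilon^2\bigl(\partial_tw+(u_\varepsilon\cdot\nabla)w-\Delta w\bigr),
\end{align*}
with $V_\varepsilon|_{t=0}=W_\varepsilon|_{t=0}=0$, since both $(v_\varepsilon,w_\varepsilon)$ and $(v,w)$ start from $(v_0,w_0)$ (recall $w_0$ is determined by $v_0$ through (\ref{ne00})). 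As $(v_\varepsilon,w_\varepsilon)$ is only a Leray--Hopf weak solution, I would not differentiate it: instead I would use the smoother strong solution $(v,w)$ of (PEs) (suitably mollified) as a test function in Definition~\ref{def}, test (PEs) against $(v_\varepsilon,w_\varepsilon)$, and add these to the energy inequality for (SNS) and the energy equality for (PEs), as in the Serrin-type weak--strong uniqueness argument. The pressure contributions cancel by $\nabla_H\cdot V_\varepsilon+\partial_zW_\varepsilon=0$, and the self-transport terms $\int(u_\varepsilon\cdot\nabla)V_\varepsilon\cdot V_\varepsilon$, $\int(u_\varepsilon\cdot\nabla)W_\varepsilon\,W_\varepsilon$ vanish because $\nabla\cdot u_\varepsilon=0$. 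This yields the relative energy inequality
\begin{align*}
\frac12\frac{d}{dt}\bigl(\|V_\varepsilon\|_2^2+\varepsilon^2\|W_\varepsilon\|_2^2\bigr)+\|\nabla V_\varepsilon\|_2^2+\varepsilon^2\|\nabla W_\varepsilon\|_2^2\le-\int_\Omega(U_\varepsilon\cdot\nabla)v\cdot V_\varepsilon-\varepsilon^2\int_\Omega\bigl(\partial_tw+(u_\varepsilon\cdot\nabla)w-\Delta w\bigr)W_\varepsilon.
\end{align*}

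Set $\mathcal E:=\|V_\varepsilon\|_2^2+\varepsilon^2\|W_\varepsilon\|_2^2$ and $\mathcal D:=\|\nabla V_\varepsilon\|_2^2+\varepsilon^2\|\nabla W_\varepsilon\|_2^2$; the target is $\frac{d}{dt}\mathcal E+\mathcal D\le C(t)\mathcal E+\varepsilon^2 g(t)$ with $\int_0^\infty(C+g)\,dt<\infty$. The first term on the right splits as $\int(V_\varepsilon\cdot\nabla_H)v\cdot V_\varepsilon+\int W_\varepsilon\,\partial_zv\cdot V_\varepsilon$; the first piece is routine, bounded by $\tfrac18\mathcal D+C\|\nabla_Hv\|_2^4\,\mathcal E$ via H\"older, $\|V_\varepsilon\|_4^2\le C\|V_\varepsilon\|_2^{1/2}\|\nabla V_\varepsilon\|_2^{3/2}$, and Young. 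The essential point is that every remaining occurrence of $W_\varepsilon$ --- in $\int W_\varepsilon\,\partial_zv\cdot V_\varepsilon$ and in the $\varepsilon^2$-error --- must be eliminated using the hydrostatic relation: $W_\varepsilon$ is odd in $z$, so incompressibility gives $W_\varepsilon(x,y,z)=-\int_0^z\nabla_H\cdot V_\varepsilon(x,y,z')\,dz'$, whence also $\|W_\varepsilon\|_2\le C\|\nabla_HV_\varepsilon\|_2$. The term $\int W_\varepsilon\,\partial_zv\cdot V_\varepsilon$ is then treated by the anisotropic (Cao--Titi-type) estimate --- controlling $\int_{-1}^1|\nabla_HV_\varepsilon|\,dz$ in $L^2(M)$ and $\int_{-1}^1|\partial_zv|\,|V_\varepsilon|\,dz$ through a Cauchy--Schwarz factorization in $z$ and the two-dimensional Ladyzhenskaya inequality on $M$ --- giving a bound $\tfrac18\mathcal D+C(\|v\|_{H^1}^4+\|v\|_{H^1}^2\|v\|_{H^2}^2)\,\mathcal E$. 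The $\varepsilon^2$-error is handled analogously: integrate the Laplacian by parts ($\varepsilon^2\!\int\Delta w\,W_\varepsilon=-\varepsilon^2\!\int\nabla w:\nabla W_\varepsilon$), integrate the transport term by parts ($\varepsilon^2\!\int(u_\varepsilon\cdot\nabla)w\,W_\varepsilon=-\varepsilon^2\!\int(u_\varepsilon\cdot\nabla)W_\varepsilon\,w$, using $\nabla\cdot u_\varepsilon=0$), again expressing $w,W_\varepsilon$ through vertical averages of horizontal derivatives of $v,V_\varepsilon$, and treat the $\partial_tw=-\int_0^z\nabla_H\cdot\partial_tv$ term in the horizontal $H^{-1}$--$H^1$ duality (which needs only $\partial_tv\in L^2(0,\infty;L^2)$); all the resulting $\varepsilon^2$-prefactored quantities are either norms of $v$ (and its first derivatives) of the strong solution, or Navier--Stokes energy quantities ($\|v_\varepsilon\|_2$, $\|\nabla v_\varepsilon\|_2$, $\varepsilon\|\nabla w_\varepsilon\|_2$, and $\|w_\varepsilon\|_2\le C\|\nabla v_\varepsilon\|_2$) controlled, via Definition~\ref{def}(ii), by $\|v_0\|_2^2+\varepsilon^2\|w_0\|_2^2$.

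With this in hand, the finiteness of $\int_0^\infty C(t)\,dt$ and $\int_0^\infty g(t)\,dt$ rests on the global-in-time regularity of the strong solution of (PEs): $v\in L^\infty(0,\infty;H^1)\cap L^2(0,\infty;H^2)$, $\partial_tv\in L^2(0,\infty;L^2)$, together with the Poincar\'e-driven energy decay (which, with $\|v(t)\|_2\le\|v_0\|_2$, yields e.g. $\|\nabla_Hv\|_2^4\le\|v_0\|_2^2\|\Delta v\|_2^2\in L^1(0,\infty)$). One then gets $\int_0^\infty g(t)\,dt\le C(\|v_0\|_{H^1},L_1,L_2)(\|v_0\|_2^2+\varepsilon^2\|w_0\|_2^2+1)^2$, and Gr\"onwall's inequality, started from $\mathcal E(0)=0$, gives
\begin{align*}
\sup_{0\le t<\infty}\mathcal E(t)+\int_0^\infty\mathcal D(t)\,dt\le C\varepsilon^2\bigl(\|v_0\|_2^2+\varepsilon^2\|w_0\|_2^2+1\bigr)^2,\qquad C=C(\|v_0\|_{H^1},L_1,L_2),
\end{align*}
which is the claimed a priori estimate. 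The stated convergences follow immediately, since $\|v_\varepsilon-v\|_{L^\infty(0,\infty;L^2)}=\|V_\varepsilon\|_{L^\infty(0,\infty;L^2)}\le C\varepsilon$, $\|\varepsilon w_\varepsilon\|_{L^\infty(0,\infty;L^2)}\le\|\varepsilon W_\varepsilon\|_{L^\infty(0,\infty;L^2)}+\varepsilon\sup_t\|w\|_2\le C\varepsilon$, $\|\nabla(v_\varepsilon-v)\|_{L^2(0,\infty;L^2)}\le C\varepsilon$, $\|\varepsilon\nabla w_\varepsilon\|_{L^2(0,\infty;L^2)}\le\|\varepsilon\nabla W_\varepsilon\|_{L^2}+\varepsilon\|\nabla w\|_{L^2}\le C\varepsilon$, and $\|w_\varepsilon-w\|_{L^2(0,\infty;L^2)}=\|W_\varepsilon\|_{L^2(0,\infty;L^2)}\le C\|\nabla_HV_\varepsilon\|_{L^2(0,\infty;L^2)}\le C\varepsilon$.

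\textbf{Main obstacle.} The crux is twofold and intertwined: (i) the vertical velocity difference $W_\varepsilon$ is controlled only with the $\varepsilon$-degenerate weights $\varepsilon^2\|W_\varepsilon\|_2^2,\ \varepsilon^2\|\nabla W_\varepsilon\|_2^2$, so every nonlinear and error contribution carrying a $W_\varepsilon$ must be recast, via $W_\varepsilon=-\int_0^z\nabla_H\cdot V_\varepsilon$, into one that the anisotropic primitive-equations machinery can absorb using only horizontal regularity of $v$; and (ii) extracting the bound \emph{uniformly on the whole time half-line} forces the Gr\"onwall coefficient $C(t)$ and the error density $g(t)$ to be integrable on $(0,\infty)$ --- exactly what the global well-posedness and decay theory for strong solutions of (PEs), unavailable when only the weak convergence was proved in \cite{AZGU}, supplies.
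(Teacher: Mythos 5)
Your proposal follows essentially the same route as the paper: a Serrin-type weak--strong relative energy argument (testing the weak formulation of (SNS) against the strong solution of (PEs), testing (PEs) against $v_\varepsilon$, and combining with the two energy laws), elimination of every $W_\varepsilon$ via $W_\varepsilon=-\int_0^z\nabla_H\cdot V_\varepsilon\,dz'$, the anisotropic Ladyzhenskaya estimates of Lemma \ref{LADTYPE}, the global-in-time bounds of Corollary \ref{cor}, and Gr\"onwall from $\mathcal E(0)=0$; this matches Propositions \ref{prop0}--\ref{prop}. The only caveat is presentational: since $(v_\varepsilon,w_\varepsilon)$ is merely Leray--Hopf, the relative energy relation holds as an integral inequality for a.e.\ $t$ rather than in the differential form you wrote, so Gr\"onwall must be run in integral form (as the paper does via the auxiliary function $F(t)$), but this does not affect the validity of your argument.
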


If we moreover suppose that $v_0\in H^2(\Omega)$, then $u_0=(v_0,w_0)\in H^1(\Omega)$,
with $w_0$ given by (\ref{ne00}). Then, by the same arguments as for the standard Navier-Stokes equations, see, e.g., \cite{TEMAMNS,CONFO}, one can obtain the unique local (in time) strong solution $(v_\varepsilon, w_\varepsilon)$ to (SNS), subject to
(\ref{bc})--(\ref{sc}). For this case, we have the following theorem concerning the strong convergence, in which the convergence
is stronger than
that in Theorem \ref{thm0}:

\begin{theorem}
  \label{thm}
Given a periodic function $v_0\in H^2(\Omega)$, such that
$$
\nabla_H\cdot\left(\int_{-1}^1v_0(x,y,z)dz\right)=0,\quad\int_\Omega v_0(x,y,z) dxdydz=0.
$$
Let $(v_\varepsilon,
w_\varepsilon)$ and $(v,w)$, respectively, be the unique local (in time) strong solution to (SNS) and the unique global strong
solution to (PEs), subject to (\ref{bc})--(\ref{sc}). Denote
$$
(V_\varepsilon, W_\varepsilon)=(v_\varepsilon-v, w_\varepsilon-w).
$$

Then, there is a positive constant $\varepsilon_0$ depending only on the initial norm $\|v_0\|_{H^2}$, $L_1$ and $L_2$, such that, for any
$\varepsilon\in(0,\varepsilon_0)$, the strong solution $(v_\varepsilon, w_\varepsilon)$ of (SNS) exists globally in time, and the following estimate holds
$$
\sup_{0\leq t<\infty}\|(V_\varepsilon,\varepsilon W_\varepsilon)\|_{H^1}^2+
\int_0^\infty\|\nabla(V_\varepsilon,\varepsilon W_\varepsilon) \|_{H^1}^2dt\leq C\varepsilon^2,
$$
for a constant $C$ depending only on $\|v_0\|_{H^2}$, $L_1$ and $L_2$.
As a consequence, the following strong convergences hold
\begin{eqnarray*}
  &(v_\varepsilon,\varepsilon w_\varepsilon)\rightarrow (v,0),\mbox{ in }L^\infty(0,\infty; H^1(\Omega)),\\
  &(\nabla v_\varepsilon, \varepsilon\nabla w_\varepsilon,w_\varepsilon)\rightarrow(\nabla v,0,w),\mbox{ in }L^2(0,\infty;H^1(\Omega)),\\
  &w_\varepsilon\rightarrow w,\mbox{ in }L^\infty(0,\infty;L^2(\Omega)),
\end{eqnarray*}
and the convergence rate is of the order $O(\varepsilon)$.
\end{theorem}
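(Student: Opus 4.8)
The plan is to regard (SNS) as a perturbation of (PEs) and to propagate $\varepsilon$-uniform energy estimates — first in $L^2$, then in $H^1$ — on the difference $(V_\varepsilon,W_\varepsilon,P_\varepsilon):=(v_\varepsilon-v,\,w_\varepsilon-w,\,p_\varepsilon-p)$; once the $H^1$ bound is in hand it rules out finite-time blow-up of the local strong solution of (SNS), hence gives its global existence. Subtracting (PEs) from (SNS) and writing $U_\varepsilon=(V_\varepsilon,W_\varepsilon)$, the difference solves
\begin{align*}
\partial_tV_\varepsilon+(u_\varepsilon\cdot\nabla)V_\varepsilon+(U_\varepsilon\cdot\nabla)v-\Delta V_\varepsilon+\nabla_HP_\varepsilon&=0,\\
\varepsilon^2\big(\partial_tW_\varepsilon+(u_\varepsilon\cdot\nabla)W_\varepsilon+(U_\varepsilon\cdot\nabla)w-\Delta W_\varepsilon\big)+\partial_zP_\varepsilon&=-\varepsilon^2\big(\partial_tw+(u\cdot\nabla)w-\Delta w\big),\\
\nabla_H\cdot V_\varepsilon+\partial_zW_\varepsilon&=0,
\end{align*}
with the same periodicity and symmetry and, crucially, with \emph{vanishing} initial datum $(V_\varepsilon,W_\varepsilon)|_{t=0}=0$, since (SNS) and (PEs) start from the same $(v_0,w_0)$. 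Only the vertical momentum equations differ, the discrepancy being the $O(\varepsilon^2)$ forcing $\varepsilon^2\mathcal R:=-\varepsilon^2(\partial_tw+(u\cdot\nabla)w-\Delta w)$; using the PE momentum equation to substitute for $\partial_tv$ (and $\partial_zw=-\nabla_H\cdot v$) one rewrites $\mathcal R$ so that $\|\mathcal R\|_2$ and $\|\nabla\mathcal R\|_2$ are bounded by PE norms. The only external inputs are the global well-posedness of the PE strong solution for such $v_0\in H^2$ (Cao--Titi, Kukavica--Ziane), together with the a priori bounds $\sup_{t\ge0}\|v\|_{H^2}\le C$, $\int_0^\infty(\|v\|_{H^3}^2+\|\partial_tv\|_{H^1}^2)\,dt\le C$, and the exponential $L^2$-decay of $v$ (which follows from $\int_\Omega v_0=0$, since $\int_\Omega v(t)=0$ is preserved and Poincar\'e applies); these ensure that the Gr\"onwall coefficients built from PE norms below are integrable on $(0,\infty)$.

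The $L^2$ estimate is precisely Theorem~\ref{thm0}: the local strong solution of (SNS), restricted to any $[0,T]$ inside its maximal existence interval $[0,T_{\max})$, satisfies the energy equality and hence is a Leray--Hopf solution, and since $v_0\in H^2\hookrightarrow H^1$ Theorem~\ref{thm0} yields
$$
\sup_{0\le t<T_{\max}}\|(V_\varepsilon,\varepsilon W_\varepsilon)\|_2^2+\int_0^{T_{\max}}\|\nabla(V_\varepsilon,\varepsilon W_\varepsilon)\|_2^2\,dt\le C\varepsilon^2 ,
$$
with $C=C(\|v_0\|_{H^2},L_1,L_2)$. Together with $\partial_zW_\varepsilon=-\nabla_H\cdot V_\varepsilon$ and the Poincar\'e inequality in $z$ (valid since $W_\varepsilon$ is odd in $z$, so its $z$-average vanishes) this already gives $\|W_\varepsilon\|_2\le C\|\nabla V_\varepsilon\|_2$.

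For the $H^1$ estimate I apply $\nabla=(\nabla_H,\partial_z)$ to the difference system, test the two momentum equations against $\nabla V_\varepsilon$ and $\nabla W_\varepsilon$ respectively, and add; the combined pressure contribution is $-\int\nabla P_\varepsilon\cdot\nabla(\nabla_H\cdot V_\varepsilon+\partial_zW_\varepsilon)=0$, leaving an identity of the form
$$
\tfrac12\tfrac{d}{dt}\big(\|\nabla V_\varepsilon\|_2^2+\varepsilon^2\|\nabla W_\varepsilon\|_2^2\big)+\|\nabla^2V_\varepsilon\|_2^2+\varepsilon^2\|\nabla^2W_\varepsilon\|_2^2=\mathcal N+\varepsilon^2\!\int\nabla\mathcal R\cdot\nabla W_\varepsilon ,
$$
where $\mathcal N$ collects the commutators produced by $(u_\varepsilon\cdot\nabla)V_\varepsilon$, $(U_\varepsilon\cdot\nabla)v$, and their $W$-analogues. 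Splitting $u_\varepsilon=U_\varepsilon+u$: the terms linear in $U_\varepsilon$ are estimated, using the anisotropic Ladyzhenskaya/Sobolev (Cao--Titi-type) inequalities and integration by parts in $z$ whenever a factor $W_\varepsilon$ meets a $z$-derivative (so as to replace $\partial_zW_\varepsilon$ by $-\nabla_H\cdot V_\varepsilon$), by $\tfrac18(\|\nabla^2V_\varepsilon\|_2^2+\varepsilon^2\|\nabla^2W_\varepsilon\|_2^2)+g(t)(\|\nabla V_\varepsilon\|_2^2+\varepsilon^2\|\nabla W_\varepsilon\|_2^2)$ with $g\in L^1(0,\infty)$ depending only on PE norms; the terms quadratic and cubic in $U_\varepsilon$ are, by the same inequalities combined with the $L^2$-estimate and the a priori smallness $\|\nabla(V_\varepsilon,\varepsilon W_\varepsilon)\|_2\le\delta$ assumed in a bootstrap, dominated by $C\delta(\|\nabla^2V_\varepsilon\|_2^2+\varepsilon^2\|\nabla^2W_\varepsilon\|_2^2)$ plus remainders that are $O(\varepsilon^2)$ after time integration; and $\varepsilon^2\int\nabla\mathcal R\cdot\nabla W_\varepsilon\le\tfrac18\varepsilon^2\|\nabla^2W_\varepsilon\|_2^2+C\varepsilon^2(\|\mathcal R\|_2^2+\|\nabla_H\mathcal R\|_2^2)$, the last factor being time-integrable by the PE bounds. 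Choosing $\delta$ a fixed small constant, absorbing the dissipative terms, and applying Gr\"onwall from the vanishing initial datum gives $\sup_{[0,T)}\|\nabla(V_\varepsilon,\varepsilon W_\varepsilon)\|_2^2+\int_0^{T}\|\nabla^2(V_\varepsilon,\varepsilon W_\varepsilon)\|_2^2\,dt\le C\varepsilon^2$. This is run as a continuity argument: $V_\varepsilon\in C([0,T_{\max});H^1)$ with $V_\varepsilon(0)=0$, so the set of $T$ with $\sup_{[0,T]}\|\nabla(V_\varepsilon,\varepsilon W_\varepsilon)\|_2^2\le\delta$ is a nonempty, relatively open and relatively closed subinterval of $[0,T_{\max})$, while the estimate just derived forces its value to be $\le C\varepsilon^2<\delta$ for $\varepsilon<\varepsilon_0(\|v_0\|_{H^2},L_1,L_2)$; hence the bound holds on all of $[0,T_{\max})$. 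Combined with the $L^2$-estimate this is the asserted control of $\|(V_\varepsilon,\varepsilon W_\varepsilon)\|_{H^1}^2+\int\|\nabla(V_\varepsilon,\varepsilon W_\varepsilon)\|_{H^1}^2$; since then $u_\varepsilon=U_\varepsilon+u$ stays bounded in $L^\infty(0,T_{\max};H^1)$, the strong-solution blow-up criterion for (SNS) yields $T_{\max}=\infty$. Reading off the norms of $(V_\varepsilon,\varepsilon W_\varepsilon)$, and using $\varepsilon w_\varepsilon=\varepsilon W_\varepsilon+\varepsilon w$, $w_\varepsilon-w=W_\varepsilon$, and Poincar\'e in $z$, delivers all the stated convergences with rate $O(\varepsilon)$.

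I expect the main obstacle to be the control of $\mathcal N$, and within it the terms in which a vertical velocity ($W_\varepsilon$, $w_\varepsilon$, or $w$) multiplies a vertical derivative — such as $\int(\partial_zu_\varepsilon\cdot\nabla)V_\varepsilon\cdot\partial_zV_\varepsilon$, $\int W_\varepsilon\,\partial_z\nabla v\cdot\nabla V_\varepsilon$, and their $\varepsilon^2$-weighted counterparts in the vertical equation — because $W_\varepsilon$ enjoys no direct $H^1$ bound and is accessible only through $\partial_zW_\varepsilon=-\nabla_H\cdot V_\varepsilon$ (equivalently $W_\varepsilon=-\int_0^z\nabla_H\cdot V_\varepsilon\,dz'$). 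These must be handled by integrating by parts in $z$ and then carefully distributing the $L^2$ norms between the horizontal and vertical directions via the Cao--Titi-type inequalities, so that the genuinely nonlinear (cubic) contributions in $(V_\varepsilon,W_\varepsilon)$ get absorbed by the dissipation thanks to the smallness already supplied by the $L^2$-estimate/bootstrap; this is exactly where the global well-posedness (and decay) of (PEs) is indispensable, and is why the weak-convergence argument of \cite{AZGU} produced no rate. A further, essentially bookkeeping, difficulty is tracking the powers of $\varepsilon$ so that every forcing and nonlinear remainder — each carrying either the prefactor $\varepsilon^2$ inherited from the vertical momentum equation or a factor $\|\nabla(V_\varepsilon,\varepsilon W_\varepsilon)\|_2\lesssim\varepsilon$ from the $L^2$-estimate — contributes an $O(\varepsilon^2)$ amount rather than $O(1)$.
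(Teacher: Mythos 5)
Your proposal is correct and follows essentially the same route as the paper: the $L^2$ difference bound inherited from Theorem \ref{thm0} (Proposition \ref{prop}), an $H^1$ energy estimate on the difference system (\ref{DIFF-1})--(\ref{DIFF-3}) in which $W_\varepsilon$ is accessed only through $W_\varepsilon=-\int_0^z\nabla_H\cdot V_\varepsilon\,dz'$ and the anisotropic Ladyzhenskaya inequalities, a smallness bootstrap to absorb the cubic terms for $\varepsilon<\varepsilon_0$, and the blow-up criterion to conclude global existence (your testing of the $\nabla$-differentiated equations against $\nabla V_\varepsilon,\nabla W_\varepsilon$ is the same, in the periodic setting, as the paper's testing against $-\Delta V_\varepsilon,-\Delta W_\varepsilon$). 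One small caution: estimate the forcing term as $\varepsilon^2\int\nabla\mathcal R:\nabla W_\varepsilon\,dxdydz=-\varepsilon^2\int\mathcal R\,\Delta W_\varepsilon\,dxdydz\le\frac18\varepsilon^2\|\Delta W_\varepsilon\|_2^2+C\varepsilon^2\|\mathcal R\|_2^2$ rather than through $\|\nabla_H\mathcal R\|_2$, since the latter involves $\|\nabla_H\partial_tw\|_2\lesssim\|\partial_tv\|_{H^2}$ and $\|\nabla_H\Delta w\|_2\lesssim\|v\|_{H^4}$, which are not controlled by the available bounds $\partial_tv\in L^2(0,\infty;H^1(\Omega))$ and $\nabla v\in L^2(0,\infty;H^2(\Omega))$ from Corollary \ref{cor}.
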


\begin{remark}
(i) Theorems \ref{thm0} and \ref{thm} show that the strong convergence of solutions of (SNS) to
the corresponding ones of (PEs) is global and uniform in time, and the convergence rate is of the
same order to the aspect ratio parameter $\varepsilon$. Moreover, the smoother the initial data is,
the stronger the norms, in which the convergence takes place. This
validates mathematically the accuracy of the hydrostatic approximation.

(ii) The assumption $\int_\Omega v_0dxdydz=0$
is imposed only for the simplicity of the proof, and the same result
still holds for the general case. One can follow the proof presented
in this paper, and establish the relevant a priori
estimates on $(v_\varepsilon-\bar v_{0\Omega})$ and $(v-\bar v_{0\Omega})$, instead of on $v_\varepsilon$ and $v$ themselves,
where $\bar v_{0\Omega}=\int_\Omega v_0dxdydz$.

(iii) Generally, if $v_0\in H^k$, with $k\geq2$, then one can show that
  $$
  \sup_{0\leq t<\infty}\|(V_\varepsilon,\varepsilon W_\varepsilon)\|_{H^{k-1}}^2(t)+
  \int_0^\infty\|\nabla(V_\varepsilon,\varepsilon W_\varepsilon) \|_{H^{k-1}}^2(t)dt\leq C\varepsilon^2,
  $$
for some positive constant $C$ depending only on $\|v_0\|_{H^k}$, $L_1$ and $L_2$. This can
be done by carrying out higher energy estimates to the difference system (\ref{DIFF-1})--(\ref{DIFF-3}), below.

(iv) Observing the smoothing effects of the (SNS) and (PEs) to the unique strong solutions, one can also
show, in Theorem \ref{thm} (but not in Theorem \ref{thm0}), the strong convergence in stronger norms,
away from the initial time,
in particular, $(v_\varepsilon, w_\varepsilon)\rightarrow(v,w)$, in $C^k(\overline\Omega
\times(T,\infty))$, for any given positive time $T$ and nonnegative integer $k$.
\end{remark}

The proofs of Theorems \ref{thm0} and \ref{thm} consist of two main ingredients:
the \textit{a priori} estimates
on the global strong solution $(v,w)$ to
(PEs), and the \textit{a priori} estimates on the difference $U_\varepsilon=(V_\varepsilon, W_\varepsilon):
=(v_\varepsilon, w_\varepsilon)-(v, w)$.
Since the convergences stated in the theorems are global and uniform in time, the desired \textit{a priori} estimates mentioned above should be global and uniform
in time. To this end, as it
has been used in several works before, see, e.g., \cite{CAOTITI07,CAOTITI12, CAOLITITI1,CAOLITITI2,CAOLITITI3,CAOLITITI4,CAOLITITI5}, we use
anisotropic treatments for (PEs) to get the \textit{a priori} estimates: we successively do
the basic energy estimate, the $L^\infty(0,\infty; L^4(\Omega))$
estimate on $v$, the $L^\infty(0,\infty; L^2(\Omega))$ estimate
on $\partial_zv$, $\nabla v$ and $\Delta v$, respectively,
where the hydrostatic approximation plays an essential role for obtaining the
the $L^\infty(0,\infty; L^4(\Omega))$ estimate on $v$, and
the Ladyzhenskaya type inequality (see Lemma \ref{LADTYPE}, below) is
frequently used throughout the whole proof. For the case of Theorem \ref{thm0},
the \textit{a priori} estimates up to $L^\infty(0,\infty; L^2(\Omega))$ of $\nabla v$ are enough, while
for Theorem \ref{thm}, we need one order higher estimates, that is $L^\infty(0,\infty; L^2(\Omega))$ of $\Delta v$.

The treatments on the estimates of the difference function $U_\varepsilon$
are different in the proofs of
Theorem \ref{thm0} and Theorem \ref{thm}. For the case of Theorem \ref{thm0}, since $(v_\varepsilon, w_\varepsilon)$
is only a Leray-Hopf weak solution, one can not do the subtraction of (SNS) and (PEs) and preform the
energy estimates to the system of difference, as it is usually done
for the strong solutions.
Instead, one can only perform the energy estimates in the framework of
the weak solutions. To this end, we adopt the idea, which was introduced
in Serrin \cite{SERRIN} (see also Bardos et al.\,\cite{BLNNT} and the
reference therein) to prove the weak-strong uniqueness
of the Navier-Stokes equations; however, the difference in our case is that,
the role of ``strong solutions" is now played by the solutions of the
(PEs), while the role of ``weak solutions" is now played by those of
(SNS), or intuitively, we are somehow doing the weak-strong uniqueness
between two different systems.
Precisely, we will: (i) use
$(v,w)$ as the testing functions for (SNS);
(ii) test (PEs) by $v_\varepsilon$; (iii) perform the basic energy identity of (PEs); (iv)
use the energy inequality for (SNS). Noticing that we end up with (i)--(iv) only one inequality but three equalities,
by manipulating these four formulas in a suitable way, we get the desired a priori
estimates for $U_\varepsilon$. We remark that this argument
can be viewed as the translation, to the language of weak solutions, of the approach of
performing energy estimates (for the strong solution) to the system of $U_\varepsilon$, below.

For the case of Theorem \ref{thm}, since the solutions considered are strong ones, one can
get the desired global in time estimates on $U_\varepsilon$ by using standard energy approach to
the system governing $(V_\varepsilon,W_\varepsilon)$, which reads as
 \begin{eqnarray*}
  &\partial_tV_\varepsilon+(U_\varepsilon\cdot\nabla)V_\varepsilon -\Delta V_\varepsilon+\nabla_HP_\varepsilon+( u\cdot\nabla)V_\varepsilon+(U_\varepsilon\cdot\nabla) v=0,\\
  &\nabla_H\cdot V_\varepsilon+\partial_zW_\varepsilon=0,\\
  &\varepsilon^2(\partial_tW_\varepsilon+U_\varepsilon\cdot\nabla W_\varepsilon-\Delta W_\varepsilon+U_\varepsilon\cdot\nabla w+ u\cdot\nabla W_\varepsilon)+\partial_zP_\varepsilon\\
  &=-\varepsilon^2(\partial_tw+u\cdot\nabla w-\Delta  w).
  \end{eqnarray*}
However, we will have to introduce some new ideas described in the
following steps. First, since the initial value of $(V_\varepsilon, W_\varepsilon)$
vanishes,
and there is a small coefficient $\varepsilon^2$ in the front of
the ``external forcing" terms in right-hand side of the above system,
one can perform the energy
approach and take advantage of the smallness argument to
get the desired a priori estimate on $(V_\varepsilon,
W_\varepsilon)$, and as a result,
the strong solution $(v_\varepsilon, w_\varepsilon)$ can be extended to be a global
one, for small $\varepsilon$. Second, one has to observe that, when
performing the energy estimates to the horizontal momentum equations for $V_\varepsilon$, no more information
about $W_\varepsilon$ can be used other than that comes from the
incompressibility condition; in other words, the term
$W_\varepsilon\partial_zV_\varepsilon$, in the horizontal momentum
equations for $V_\varepsilon$, can be only dealt with by expressing $W_\varepsilon$ as
$$
W_\varepsilon(x,y,z,t)=-\int_{0}^z\nabla_H\cdot V_\varepsilon(x,y,z',t)dz'.
$$
This is because the explicit dynamical information of $W_\varepsilon$, which comes from the vertical momentum equation,
is always
tied up with the parameter $\varepsilon$ (see Propositions \ref{prop},
\ref{basicdiff} and \ref{firstdiff}, below), which will finally go to
zero, in other words, the vertical momentum equation for $W_\varepsilon$ provides no $\varepsilon$-independent dynamical information of $W_\varepsilon$.
After achieving the desired \textit{a priori} estimates, the strong convergences follow immediately.

Throughout this paper, we always suppose, as it is stated in Theorem \ref{thm0} and Theorem \ref{thm}, that the initial data $v_0$ satisfies
$$
\int_\Omega v_0(x,y,z)dxdydz=0.
$$
As a result, by integrating the horizontal momentum equations of the (SNS) and (PEs) over $\Omega$, respectively, one has
$$
\int_\Omega v_\varepsilon(x,y,z,t)dxdydz=\int_\Omega v(x,y,z,t)dxdydz=0.
$$
Noticing that
\begin{eqnarray*}
&&w_\varepsilon(x,y,z,t)=-\int _0^z\nabla_H\cdot v_\varepsilon dz',\quad w(x,y,z,t)=-\int _0^z\nabla_H\cdot vdz',
\end{eqnarray*}
for any $(x,y,z)\in\Omega$, we have
$$
\int_\Omega w_\varepsilon(x,y,z,t)dxdydz=\int_\Omega w(x,y,z,t)dxdydz=0.
$$
Therefore, all the velocities encountered in this paper are of average zero. We will always, without any further mentions, recall this fact before using the Poincar\'e inequality, in the rest of this paper.

The rest of this paper is arranged as follows: some preliminary lemmas are collected in the next section, section \ref{sec2}. In
section \ref{sec3}, we carry out the \textit{a priori} estimates on the strong
solutions to (PEs), while the proofs of Theorem \ref{thm0} and Theorem \ref{thm} are given in section \ref{sec4} and section \ref{sec5}, respectively.

Throughout this paper, if not specified, the $C$ denotes a general positive constant depending only on $L_1$ and $L_2$.

\section{Preliminaries}
\label{sec2}
In this section, we state some Ladyzhenskaya-type inequalities for some
kinds of three dimensional integrals, which will be frequently used in the rest of this paper.

\begin{lemma}[see \cite{CAOTITI03}]\label{LADTYPE}
The following inequalities hold true
\begin{align*}
&\int_M\left(\int_{-1}^1f(x,y,z)dz\right)\left(\int_{-1}^1g(x,y,z)h(x,y,z)dz\right)dxdy\\
\leq&C\|f\|_2^{1/2}\left(\|f\|_2^{1/2}+\|\nabla_Hf\|_{2}^{1/2}\right)\|g\|_2\|h\|_2^{1/2}\left(
\|h\|_2^{1/2}+\|\nabla_Hh\|_2^{1/2}\right),
\end{align*}
and
\begin{align*}
&\int_M\left(\int_{-1}^1f(x,y,z)dz\right)\left(\int_{-1}^1g(x,y,z)h(x,y,z)dz\right)dxdy\\
\leq&C\|f\|_2\|g\|_2^{1/2}\left(\|g\|_2^{1/2}+\|\nabla_Hg\|_{2}^{1/2}\right)\|h\|_2^{1/2}\left(
\|h\|_2^{1/2}+\|\nabla_Hh\|_2^{1/2}\right),
\end{align*}
for every $f,g,h$ such that the right-hand sides make sense and are finite, where $C$ is a positive constant depending only on $L_1$ and $L_2$.
\end{lemma}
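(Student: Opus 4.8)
The plan is to reduce both three-dimensional estimates to the classical two-dimensional Ladyzhenskaya inequality on $M$, after performing the integration in the vertical variable $z$. The first step is to bound the integrand by its absolute value and apply the Cauchy--Schwarz inequality in $z$ to the inner product $\int_{-1}^1 gh\,dz$, namely
$$
\left|\int_{-1}^1 gh\,dz\right|\le\left(\int_{-1}^1 g^2\,dz\right)^{1/2}\left(\int_{-1}^1 h^2\,dz\right)^{1/2},
$$
and to set $\phi:=\int_{-1}^1 f\,dz$, $\psi:=\big(\int_{-1}^1 g^2\,dz\big)^{1/2}$ and $\chi:=\big(\int_{-1}^1 h^2\,dz\big)^{1/2}$, which are functions of $(x,y)\in M$. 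Then the left-hand side of either inequality in the lemma is dominated by $\int_M|\phi|\,\psi\,\chi\,dxdy$.

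Next I would split this product by H\"older's inequality on $M$ with exponents $(4,2,4)$: for the first inequality placing $\phi,\chi\in L^4(M)$ and $\psi\in L^2(M)$, and for the second inequality placing $\psi,\chi\in L^4(M)$ and $\phi\in L^2(M)$. To each $L^4(M)$-factor I would then apply the two-dimensional Ladyzhenskaya inequality in the form
$$
\|\zeta\|_{4,M}\le C\|\zeta\|_{2,M}^{1/2}\left(\|\zeta\|_{2,M}^{1/2}+\|\nabla_H\zeta\|_{2,M}^{1/2}\right),
$$
valid for every periodic $\zeta$ on $M$, with $C=C(L_1,L_2)$ (the term $\|\zeta\|_{2,M}$ accounts for the zero Fourier mode, so that no vanishing-mean hypothesis is needed).

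It then remains to convert the $L^2(M)$ and horizontal-gradient norms of $\phi,\psi,\chi$ back into the norms of $f,g,h$ appearing in the statement. For $\phi$ one uses Cauchy--Schwarz in $z$ together with Fubini to get $\|\phi\|_{2,M}\le\sqrt2\,\|f\|_2$ and, since $\nabla_H\phi=\int_{-1}^1\nabla_Hf\,dz$, also $\|\nabla_H\phi\|_{2,M}\le\sqrt2\,\|\nabla_Hf\|_2$; for $\psi$ and $\chi$ one has the exact identities $\|\psi\|_{2,M}=\|g\|_2$ and $\|\chi\|_{2,M}=\|h\|_2$. The only computation that is not pure bookkeeping, and hence the ``hard'' point of this otherwise routine lemma, is the horizontal gradient of the square roots: from
$$
\nabla_H\chi=\left(\int_{-1}^1 h^2\,dz\right)^{-1/2}\int_{-1}^1 h\,\nabla_Hh\,dz
$$
and Cauchy--Schwarz in $z$ one obtains the pointwise bound $|\nabla_H\chi|\le\big(\int_{-1}^1|\nabla_Hh|^2\,dz\big)^{1/2}$ (trivially valid, with $\nabla_H\chi=0$ a.e., on the set $\{\chi=0\}$), whence $\|\nabla_H\chi\|_{2,M}\le\|\nabla_Hh\|_2$, and likewise $\|\nabla_H\psi\|_{2,M}\le\|\nabla_Hg\|_2$. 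Substituting these four elementary bounds into the H\"older--Ladyzhenskaya estimate above yields precisely the two claimed inequalities, the constant $C$ depending only on $L_1$ and $L_2$. The only genuinely delicate points are the interchange of $\nabla_H$ with the $z$-integration and the differentiation of the square roots; alternatively, one may simply invoke Cao--Titi \cite{CAOTITI03}.
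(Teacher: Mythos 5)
Your proof is correct, and since the paper itself gives no proof of this lemma (it simply cites Cao--Titi \cite{CAOTITI03}), there is no in-paper argument to compare against; your reduction to the two-dimensional Ladyzhenskaya inequality via Cauchy--Schwarz in $z$ followed by H\"older with exponents $(4,2,4)$ on $M$ is exactly the standard route. The only step deserving extra care is the one you yourself flag: to apply the 2D inequality to $\chi=\big(\int_{-1}^1h^2\,dz\big)^{1/2}$ you must first know that $\chi\in H^1(M)$, and the pointwise formula for $\nabla_H\chi$ degenerates on the set $\{\chi=0\}$. This is handled cleanly by working with $\chi_\delta=\big(\int_{-1}^1h^2\,dz+\delta\big)^{1/2}$, for which $|\nabla_H\chi_\delta|\le\big(\int_{-1}^1|\nabla_Hh|^2\,dz\big)^{1/2}$ uniformly in $\delta$, and letting $\delta\to0$. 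Alternatively, one can bypass the square root altogether: write $\|\chi\|_{4,M}^2=\big\|\int_{-1}^1h^2\,dz\big\|_{2,M}$, apply Minkowski's integral inequality to bound this by $\int_{-1}^1\|h(\cdot,\cdot,z)\|_{4,M}^2\,dz$, use the 2D Ladyzhenskaya inequality slice by slice in $z$, and then integrate in $z$ with Cauchy--Schwarz; this yields the same bound $\|\chi\|_{4,M}\le C\|h\|_2^{1/2}\big(\|h\|_2^{1/2}+\|\nabla_Hh\|_2^{1/2}\big)$ with no differentiation of square roots. With either fix, the two inequalities follow exactly as you describe.
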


As a corollary, we prove the following:

\begin{lemma}\label{ladlem}
Let $\varphi=(\varphi_1, \varphi_2, \varphi_3), \phi$ and $\psi$ be periodic functions with basic domain $\Omega$. Suppose that $\varphi\in H^1(\Omega)$,
with $\nabla\cdot \varphi=0$ in $\Omega$, $\int_\Omega \varphi dxdydz=0$,
and $\varphi_3|_{z=0}=0$,
$\nabla \phi\in H^1(\Omega)$ and $\psi\in L^2(\Omega)$. Denote by
$\varphi_H=(\varphi_1, \varphi_2)$ the horizontal components of the function $\varphi$.
Then, we have the following estimate
$$
  \left|\int_\Omega (\varphi\cdot\nabla \phi)\psi dxdydz\right|\leq C\|\nabla \varphi_H\|_2^{\frac12}\|\Delta \varphi_H\|_2^{\frac12}\|\nabla \phi\|_2^{\frac12}\|\Delta \phi\|_2^{\frac12}\|\psi\|_2,
$$
  where $C$ is a positive constant depending only on $L_1$ and $L_2$.
\end{lemma}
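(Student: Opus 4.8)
The plan is to estimate the integral $\int_\Omega(\varphi\cdot\nabla\phi)\psi\,dxdydz$ by splitting $\varphi\cdot\nabla\phi = \varphi_H\cdot\nabla_H\phi + \varphi_3\partial_z\phi$ and treating the two resulting integrals separately, in each case reducing a genuinely three-dimensional estimate to a one-dimensional integration in $z$ followed by an application of the Ladyzhenskaya-type inequality of Lemma \ref{LADTYPE}. The key structural observation is that, because $\varphi_3|_{z=0}=0$, we may write $\varphi_3(x,y,z) = \int_0^z \partial_z\varphi_3(x,y,z')\,dz' = -\int_0^z \nabla_H\cdot\varphi_H(x,y,z')\,dz'$, using the incompressibility $\nabla\cdot\varphi=0$; this converts the vertical component $\varphi_3$ into a vertical average of horizontal derivatives of $\varphi_H$, which is exactly the shape to which Lemma \ref{LADTYPE} applies.

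First I would handle the term $\int_\Omega \varphi_3\,\partial_z\phi\,\psi\,dxdydz$. Substituting the formula for $\varphi_3$ and bounding pointwise in $(x,y)$,
\[
\left|\int_\Omega \varphi_3\,\partial_z\phi\,\psi\,dxdydz\right|
\le \int_M \left(\int_{-1}^1 |\nabla_H\cdot\varphi_H|\,dz\right)\left(\int_{-1}^1 |\partial_z\phi|\,|\psi|\,dz\right)dxdy.
\]
Now apply the first inequality of Lemma \ref{LADTYPE} with $f = |\nabla_H\varphi_H|$ (more precisely with $f$ a component of $\nabla_H\varphi_H$), $g=|\psi|$, $h=|\partial_z\phi|$. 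This produces a bound of the form $C\|\nabla_H\varphi_H\|_2^{1/2}(\|\nabla_H\varphi_H\|_2^{1/2}+\|\nabla_H^2\varphi_H\|_2^{1/2})\|\psi\|_2\|\partial_z\phi\|_2^{1/2}(\|\partial_z\phi\|_2^{1/2}+\|\nabla_H\partial_z\phi\|_2^{1/2})$. Absorbing $\|\nabla_H\varphi_H\|_2 \le C\|\nabla\varphi_H\|_2^{1/2}\|\Delta\varphi_H\|_2^{1/2}$ via interpolation (using periodicity and zero average, so that the full Laplacian controls second derivatives and Poincaré lets lower-order terms be dominated), and similarly $\|\partial_z\phi\|_2 + \|\nabla_H\partial_z\phi\|_2 \le C\|\nabla\phi\|_2^{1/2}\|\Delta\phi\|_2^{1/2}$ (here one uses $\nabla\phi\in H^1$, so $\partial_z\phi$ and $\nabla_H\partial_z\phi$ are both controlled by $\|\nabla\phi\|_{H^1}$, and then interpolate), one arrives at the claimed bound $C\|\nabla\varphi_H\|_2^{1/2}\|\Delta\varphi_H\|_2^{1/2}\|\nabla\phi\|_2^{1/2}\|\Delta\phi\|_2^{1/2}\|\psi\|_2$.

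For the horizontal term $\int_\Omega \varphi_H\cdot\nabla_H\phi\,\psi\,dxdydz$, I would instead write $\varphi_H(x,y,z) = \overline{\varphi_H}(x,y) + \int_0^z \partial_z\varphi_H(x,y,z')\,dz'$ where $\overline{\varphi_H}$ is the vertical average, or more simply use $|\varphi_H(x,y,z)| \le \frac12\int_{-1}^1|\varphi_H|\,dz + \int_{-1}^1|\partial_z\varphi_H|\,dz$ (a one-dimensional Agmon/averaging bound), and then again bound pointwise in $(x,y)$ to get $\int_M(\int_{-1}^1(|\varphi_H|+|\partial_z\varphi_H|)dz)(\int_{-1}^1|\nabla_H\phi||\psi|dz)dxdy$. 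Applying Lemma \ref{LADTYPE} once more with $f = |\varphi_H| + |\partial_z\varphi_H|$, $g = |\nabla_H\phi|$, $h = |\psi|$ (using the second inequality this time, so that the derivative falls on $g = \nabla_H\phi$, giving $\|\nabla_H\phi\|_2^{1/2}\|\nabla_H^2\phi\|_2^{1/2}$) and using Poincaré to write $\|\varphi_H\|_2 \le C\|\nabla\varphi_H\|_2$ together with the interpolation $\|\nabla\varphi_H\|_2 \le C\|\nabla\varphi_H\|_2^{1/2}\|\Delta\varphi_H\|_2^{1/2}\cdot(\text{Poincaré factor})$, one again collects the same final bound. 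The main obstacle I anticipate is purely bookkeeping: making sure that in each application of Lemma \ref{LADTYPE} the lower-order norms ($\|\varphi_H\|_2$, $\|\nabla_H\varphi_H\|_2$, $\|\partial_z\phi\|_2$, etc.) genuinely get absorbed into the two target quantities $\|\nabla\varphi_H\|_2^{1/2}\|\Delta\varphi_H\|_2^{1/2}$ and $\|\nabla\phi\|_2^{1/2}\|\Delta\phi\|_2^{1/2}$, which requires invoking the zero-average hypotheses $\int_\Omega\varphi\,dxdydz=0$ (for $\varphi_H$) and periodicity (so that $\|\Delta\cdot\|_2$ controls all second-order derivatives) — and checking that $\nabla\phi$ need not itself have zero average, so the interpolation on $\phi$ must be done at the level of $\nabla\phi\in H^1$ rather than $\phi$, exactly as stated in the hypothesis.
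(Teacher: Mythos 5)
Your overall strategy is the same as the paper's: reduce the three\-/dimensional integral to an $xy$-integral of products of $z$-integrals, apply Lemma \ref{LADTYPE}, and then convert all the resulting norms into $\|\nabla\varphi_H\|_2$, $\|\Delta\varphi_H\|_2$, $\|\nabla\phi\|_2$, $\|\Delta\phi\|_2$ via $\varphi_3=-\int_0^z\nabla_H\cdot\varphi_H\,dz'$, Poincar\'e, and periodicity. The only cosmetic difference is that you split $\varphi\cdot\nabla\phi$ into horizontal and vertical parts, whereas the paper keeps the full vector $\varphi$ and uses the single pointwise bound $|\varphi|\leq C\int_{-1}^1(|\varphi|+|\partial_z\varphi|)\,dz$ followed by one application of the first inequality of Lemma \ref{LADTYPE} with $g=\psi$; your treatment of the $\varphi_3\partial_z\phi$ term is exactly this.

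There is, however, one concrete misstep in your horizontal term. You propose to apply the \emph{second} inequality of Lemma \ref{LADTYPE} with $f=|\varphi_H|+|\partial_z\varphi_H|$, $g=|\nabla_H\phi|$, $h=|\psi|$. In the second inequality the only factor that escapes without a horizontal derivative is $f$; both $g$ and $h$ acquire the factor $\|\cdot\|_2^{1/2}(\|\cdot\|_2^{1/2}+\|\nabla_H\cdot\|_2^{1/2})$. With your assignment this produces $\|\nabla_H\psi\|_2^{1/2}$, which is unavailable since $\psi$ is only assumed to lie in $L^2(\Omega)$. The fix is to use the \emph{first} inequality with $g=\psi$ and $h=|\nabla_H\phi|$ (so that $\psi$ is the derivative-free slot), which yields
$$
C\,\|f\|_2^{\frac12}\bigl(\|f\|_2^{\frac12}+\|\nabla_Hf\|_2^{\frac12}\bigr)\,\|\psi\|_2\,\|\nabla_H\phi\|_2^{\frac12}\bigl(\|\nabla_H\phi\|_2^{\frac12}+\|\nabla_H^2\phi\|_2^{\frac12}\bigr),
$$
and then $\|f\|_2\leq C\|\nabla\varphi_H\|_2$ (Poincar\'e, using $\int_\Omega\varphi\,dxdydz=0$) and $\|\nabla_Hf\|_2\leq C\|\Delta\varphi_H\|_2$ close the argument exactly as in your vertical term. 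A second, minor, imprecision: the displayed claim $\|\partial_z\phi\|_2+\|\nabla_H\partial_z\phi\|_2\leq C\|\nabla\phi\|_2^{1/2}\|\Delta\phi\|_2^{1/2}$ is false as written (the second summand is of the order of $\|\Delta\phi\|_2$); what is true, and what you actually need, is the bound on the product $\|\partial_z\phi\|_2^{1/2}(\|\partial_z\phi\|_2^{1/2}+\|\nabla_H\partial_z\phi\|_2^{1/2})\leq C\|\nabla\phi\|_2^{1/2}\|\Delta\phi\|_2^{1/2}$, using that $\nabla\phi$ has zero average by periodicity so that $\|\nabla\phi\|_2\leq C\|\Delta\phi\|_2$. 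With these two corrections your proof is complete and coincides in substance with the paper's.
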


\begin{proof}
  Since $\varphi_3(x,y,0)=0$ and $\nabla\cdot \varphi=0$, one has
  $$
  \varphi_3(x,y,z)=\int_0^z\partial_z\varphi_3(x,y,z')dz'=- \int_0^z\nabla_H\cdot \varphi_h(x,y,z')dz',
  $$
  from which, by the H\"older inequality, we have
  \begin{eqnarray*}
    &&\|\varphi_3\|_2,\|\partial_z\varphi_3\|_2\leq\|\nabla_H \varphi_H\|_2,\quad \|\nabla_H\varphi_3\|_2,\|\nabla_H\partial_z \varphi_3\|_2\leq\|\Delta_H\varphi_H\|_2,
  \end{eqnarray*}
  and thus, recalling that $\int_\Omega \varphi dxdydz=0$, it follows from the Poincar\'e inequality that
  \begin{align}
  \|\varphi\|_2\leq&\|\varphi_H\|_2+\|\varphi_3\|_2\leq \|\varphi_H\|_2+\|\nabla_H\varphi_H\|_2\leq C\|\nabla \varphi_H\|_2,\label{c1}\\
  \|\nabla_H\varphi\|_2\leq&\|\nabla_H\varphi_H \|_2+\|\nabla_H\varphi_3\|_2\nonumber\\
  \leq&\|\nabla_H\varphi_H\|_2+\|\Delta_H\varphi_H\|_2\leq C\|\Delta \varphi_H\|_2,\label{c2}\\
  \|\partial_z\varphi\|_2\leq&\|\partial_z\varphi_H \|_2+\|\partial_z\varphi_3\|_2\leq \|\partial_z\varphi_H\|_2+\|\nabla_H \varphi_H \|_2\leq C\|\nabla \varphi_H\|_2,\label{c3}\\
  \|\partial_z\nabla_H \varphi\|_2\leq&\|\partial_z\nabla_H\varphi_H \|_2+\|\partial_z\nabla_H\varphi_3\|_2\nonumber\\
  \leq& \|\partial_z\nabla_H\varphi_H\|_2+\|\Delta_H \varphi_H\|_2\leq C\|\Delta \varphi_H\|_2. \nonumber
  \end{align}
  Therefore, it follows from Lemma \ref{LADTYPE} and the Poincar\'e inequality that
  \begin{align*}
    &\left|\int_\Omega (\varphi\cdot\nabla \phi)\psi dxdydz\right|\\
    \leq&\int_M\left(\int_{-1}^1(|\varphi|+|\partial_z\varphi|) dz\right)\left(\int_{-1}^1|\nabla \phi||\psi|dz\right)dxdy\\
    \leq&C\left[\|\varphi\|_2^{\frac12}(\|\varphi\|_2+\|\nabla_H\varphi \|_2)^{\frac12}+ \|\partial_z\varphi\|_2^{\frac12}(\|\partial_z\varphi \|_2+\|\nabla_H\partial_z\varphi\|_2)^{\frac12}\right]\\
    &\times\|\nabla \phi\|_2^{\frac12}(\|\nabla \phi\|_2+\|\nabla_H\nabla \phi\|_2)^{\frac12}\|\psi\|_2\\
    \leq&C\|\nabla \varphi_H\|_2^{\frac12}\|\Delta \varphi_H\|_2^{\frac12}\|\nabla \phi\|_2^{\frac12}\|\Delta \phi\|_2^{\frac12}\|\psi\|_2,
  \end{align*}
  proving the conclusion.
\end{proof}

\section{A priori estimates on the primitive equations}
\label{sec3}
As it was mentioned in the introduction, the global well-posedness
(more precisely, the \textit{a priori} estimates) of strong solutions
to (PEs) plays a fundamental role in the proof of the strong convergences of the small aspect ratio limit of the Navier-Stoke equations to the primitive equations. In this section, we carry out the \textit{a priori} estimates on the strong solutions to the primitive equations.

We rewrite the primitive equations (\ref{PE}) as
\begin{eqnarray}
  &&\partial_tv+(v\cdot\nabla_H)v+w\partial_zv-\Delta v+\nabla_Hp(x,y,t)=0,\label{PE-1}\\
  &&\nabla_H\cdot v+\partial_zw=0.\label{PE-2}
\end{eqnarray}
Note that, we have used here the fact that, due to the identity $\partial_zp=0$ in (\ref{PE}),
the pressure depends only on two spatial variables $x$ and $y$.

By the $H^1$ theory of the primitive equations, see \cite{CAOTITI07}, for any $H^1$ initial data $v_0$, such that
$$
\nabla_H\cdot\left(\int_{-1}^1 v_0(x,y,z)dz\right)=0,\quad\mbox{for all }(x,y)\in M,
$$
there is a unique global strong solution $v$ to the primitive equations (\ref{PE-1})--(\ref{PE-2}), subject to (\ref{bc})--(\ref{sc}), such that $v\in C([0,\infty); H^1(\Omega))\cap L^2_{\text{loc}}([0,\infty); H^2(\Omega))$ and
$\partial_tv\in L^2_{\text{loc}}([0,\infty); L^2(\Omega))$. Generally, if the initial data $v_0$ has more regularities, then the solution $v$ will have the corresponding higher regularities, see, e.g., Petcu--Wirosoetisno \cite{PEWI05}. Moreover, due to the smoothing effect of the primitive equations to the strong solutions, one can show that $v$ is smooth away from the initial time, see Corollary 3.1 in Li--Titi \cite{LITITIUNIQ}. This fact guarantees the validity of our arguments in the following proofs.

We are going to do several a priori estimates on $v$, the unique global strong solution to the primitive equations (\ref{PE-1})--(\ref{PE-2}), subject to the boundary and initial conditions (\ref{bc})--(\ref{sc}), with initial data $v_0$.

Let's start with the following basic energy estimate.

\begin{proposition}[Basic energy estimate]
  \label{pebasic}
  Suppose that $v_0\in H^1(\Omega)$. Then, we have
  $$
  \|v(t)\|_2^2+2\int_0^t\|\nabla v\|_2^2ds=\|v_0\|_2^2,\quad\mbox{and }\quad
  \|v(t)\|_2^2\leq e^{-2\lambda_1t}\|v_0\|_2^2,
  $$
  for any $t\in[0,\infty)$, where $\lambda_1>0$ is the first eigenvalue of the following eigenvalue problem
  \begin{equation*}
    -\Delta\phi=\lambda\phi,\quad\int_\Omega\phi\, dxdydz=0,\quad\phi\mbox{ is periodic.}
  \end{equation*}
\end{proposition}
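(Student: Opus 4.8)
The plan is to establish the energy identity by the classical method: take the $L^2(\Omega)$ inner product of the horizontal momentum equation (\ref{PE-1}) with $v$ and integrate over $\Omega$. Three contributions must be analyzed: the transport term, the pressure term, and the viscous term. The first two vanish and the third produces $\|\nabla v\|_2^2$, so that, after noting $\langle\partial_tv,v\rangle=\tfrac12\frac{d}{dt}\|v\|_2^2$, one arrives at $\frac{d}{dt}\|v\|_2^2+2\|\nabla v\|_2^2=0$, which integrates to the first identity.

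For the transport term, observe that $(v\cdot\nabla_H)v+w\partial_zv=u\cdot\nabla v$ with $u=(v,w)$, and $\nabla\cdot u=\nabla_H\cdot v+\partial_zw=0$ by (\ref{PE-2}). Using that all functions are periodic in $x,y,z$, so that integration by parts produces no boundary terms,
$$
\int_\Omega (u\cdot\nabla v)\cdot v\,dxdydz=\frac12\int_\Omega u\cdot\nabla|v|^2\,dxdydz=-\frac12\int_\Omega (\nabla\cdot u)|v|^2\,dxdydz=0.
$$
For the pressure term, since $p=p(x,y,t)$ is independent of $z$,
$$
\int_\Omega \nabla_Hp\cdot v\,dxdydz=\int_M\nabla_Hp\cdot\Big(\int_{-1}^1v\,dz\Big)dxdy=-\int_M p\,\nabla_H\cdot\Big(\int_{-1}^1v\,dz\Big)dxdy .
$$
Integrating (\ref{PE-2}) in $z$ over $(-1,1)$ and using periodicity of $w$ in $z$ gives $\nabla_H\cdot\big(\int_{-1}^1v\,dz\big)=-\big(w(x,y,1,t)-w(x,y,-1,t)\big)=0$; equivalently, this barotropic divergence-free condition holds at $t=0$ by hypothesis and is propagated by the dynamics. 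Hence the pressure term vanishes. Finally $-\int_\Omega\Delta v\cdot v\,dxdydz=\|\nabla v\|_2^2$ by periodicity, which completes the derivation of the identity.

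For the exponential decay, integrating (\ref{PE-1}) over $\Omega$ and using periodicity (the transport term is a full divergence, $\int_\Omega\nabla_Hp=0$, $\int_\Omega\Delta v=0$) shows $\frac{d}{dt}\int_\Omega v\,dxdydz=0$, so $\int_\Omega v(t)\,dxdydz=\int_\Omega v_0\,dxdydz=0$ for all $t\geq0$. The Poincar\'e inequality then gives $\|\nabla v\|_2^2\geq\lambda_1\|v\|_2^2$, whence $\frac{d}{dt}\|v\|_2^2\leq-2\lambda_1\|v\|_2^2$, and Gr\"onwall's inequality yields $\|v(t)\|_2^2\leq e^{-2\lambda_1t}\|v_0\|_2^2$.

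The only genuine point requiring care is that the above be an identity, not merely an inequality: one must verify that the strong solution is regular enough for the pairing $\langle\partial_tv,v\rangle=\tfrac12\frac{d}{dt}\|v\|_2^2$ and for all the integrations by parts to be legitimate, and in particular that $\int_\Omega(u\cdot\nabla v)\cdot v\,dxdydz$ is absolutely convergent. This is guaranteed by the regularity class $v\in C([0,\infty);H^1(\Omega))\cap L^2_{\mathrm{loc}}([0,\infty);H^2(\Omega))$ with $\partial_tv\in L^2_{\mathrm{loc}}([0,\infty);L^2(\Omega))$, together with the interior smoothing of the primitive equations (so no issue arises for $t>0$) and the continuity $v\in C([0,\infty);H^1)$ at $t=0$; these were recorded above and in \cite{CAOTITI07,LITITIUNIQ}.
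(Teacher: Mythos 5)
Your proof is correct and follows essentially the same route as the paper: test (\ref{PE-1}) with $v$, use incompressibility and the $z$-independence of the pressure to kill the transport and pressure terms, integrate the resulting identity, and then combine the preserved zero-mean condition with the Poincar\'e and Gronwall inequalities for the decay. You simply spell out the integrations by parts and the regularity justification that the paper leaves implicit.
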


\begin{proof}
Taking the $L^2(\Omega)$ inner product to equation (\ref{PE-1})
with $v$, then it follows from integration by parts that
$$
  \frac12\frac{d}{dt}\|v\|_2^2+\|\nabla v\|_2^2=0,
$$
from which, integrating in $t$ yields the first conclusion. By
the Poincar\'e inequality, one has $\|\nabla v\|_2^2\geq\lambda_1\|v\|_2^2$, and thus we
have
  $$
  \frac{d}{dt}\|v\|_2^2+2\lambda_1\|v\|_2^2\leq0,
  $$
from which, by the Gronwall inequality, the second conclusion
follows.
\end{proof}

Since the high order estimates depend on the $L^\infty(0,\infty; L^4(\Omega))$ estimate of $v$, we first prove this estimate in the following lemma.

\begin{proposition}[$L^\infty(0,\infty; L^4(\Omega))$ estimate for $v$]
  \label{pel4}
  Suppose that $v_0\in H^1(\Omega)$. Then, we have the following estimate
  $$
  \sup_{0\leq s\leq t}\|v\|_4^4(s)+2\int_0^t\left\||v|\nabla v|\right\|_2^2(s)ds\leq e^{C(\|v_0\|_2^2+\|v_0\|_2^4)}\|v_0\|_4^4,
  $$
  for any $t\in[0,\infty)$, where $C$ is a positive constant depending only on $L_1$ and $L_2$.
\end{proposition}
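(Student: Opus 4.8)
The plan is to run a weighted $L^4$ energy estimate on the horizontal momentum equation (\ref{PE-1}). First I would take the $L^2(\Omega)$ inner product of (\ref{PE-1}) with $|v|^2v$; this is legitimate since, by the smoothing effect recalled above, $v$ is smooth for $t>0$. The time–derivative term gives $\frac14\frac{d}{dt}\|v\|_4^4$; the viscous term, after integration by parts (no boundary terms, by periodicity), gives $\int_\Omega|v|^2|\nabla v|^2\,dxdydz+\frac12\int_\Omega\bigl|\nabla|v|^2\bigr|^2\,dxdydz$, which is bounded below by $\||v||\nabla v|\|_2^2$; and the convective term vanishes, because $(v\cdot\nabla_H)v\cdot|v|^2v+w\partial_zv\cdot|v|^2v=\frac14(u\cdot\nabla)|v|^4$ with $u=(v,w)$ divergence free (by (\ref{PE-2})) and all fields periodic. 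Thus the whole estimate reduces to controlling the pressure contribution $I_p:=\int_\Omega\nabla_Hp\cdot|v|^2v\,dxdydz$, and this is precisely where the hydrostatic balance $\partial_zp=0$ must be exploited.

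Since $p=p(x,y,t)$, I would write $I_p=\int_M\nabla_Hp\cdot\bigl(\int_{-1}^1|v|^2v\,dz\bigr)dxdy$, so that $|I_p|\le\|\nabla_Hp\|_{2,M}\,\bigl\|\int_{-1}^1|v|^2v\,dz\bigr\|_{2,M}$, and bound the two factors separately. For the first, averaging (\ref{PE-1}) in $z$ (using $\int_{-1}^1\partial_z^2v\,dz=0$ and $w\partial_zv=\partial_z(wv)+(\nabla_H\cdot v)v$) yields an equation for the barotropic mode $\bar v:=\frac12\int_{-1}^1v\,dz$ whose only pressure term is $\nabla_Hp$; taking $\nabla_H\cdot$ of that equation and using $\nabla_H\cdot\bar v=0$ annihilates $\partial_t\bar v$ and $\Delta_H\bar v$ and leaves $\Delta_Hp=-\nabla_H\cdot\overline{(v\cdot\nabla_H)v+(\nabla_H\cdot v)v}$, the overline denoting the vertical average $\frac12\int_{-1}^1\cdot\,dz$. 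Hence $\nabla_Hp$ is the Helmholtz--Leray projection of $-\overline{(v\cdot\nabla_H)v+(\nabla_H\cdot v)v}$ onto horizontal gradient fields; since that projection is bounded by $1$ on $L^2(M)$, and since $\bigl(\frac12\int_{-1}^1|v||\nabla v|\,dz\bigr)^2\le\frac12\int_{-1}^1|v|^2|\nabla v|^2\,dz$, one gets $\|\nabla_Hp\|_{2,M}\le C\bigl\|\overline{|v||\nabla v|}\bigr\|_{2,M}\le C\||v||\nabla v|\|_2$. For the second factor I would invoke the two–dimensional Sobolev embedding $W^{1,1}(M)\hookrightarrow L^2(M)$, together with $|\nabla_H(|v|^2v)|\le C|v|^2|\nabla v|$, $\int_\Omega|v|^2|\nabla v|\,dxdydz\le\|v\|_4^2\|\nabla v\|_2$ and the interpolation $\|v\|_3^3\le\|v\|_2\|v\|_4^2$, giving $\bigl\|\int_{-1}^1|v|^2v\,dz\bigr\|_{2,M}\le C\|v\|_4^2(\|\nabla v\|_2+\|v\|_2)$. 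Inserting both bounds and applying Young's inequality then yields $|I_p|\le\frac12\||v||\nabla v|\|_2^2+C(\|\nabla v\|_2^2+\|v\|_2^2)\|v\|_4^4$.

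Collecting everything produces the differential inequality $\frac{d}{dt}\|v\|_4^4+2\||v||\nabla v|\|_2^2\le C(\|\nabla v\|_2^2+\|v\|_2^2)\|v\|_4^4$, and I would close by Gronwall. By Proposition \ref{pebasic} one has $\int_0^\infty\|\nabla v\|_2^2\,ds\le\frac12\|v_0\|_2^2$, and from the exponential decay $\|v(s)\|_2^2\le e^{-2\lambda_1 s}\|v_0\|_2^2$ there also $\int_0^\infty\|v\|_2^2\,ds\le\frac1{2\lambda_1}\|v_0\|_2^2$; hence the integrating factor is bounded by $e^{C(\|v_0\|_2^2+\|v_0\|_2^4)}$, which gives the stated $L^\infty$-in-time bound for $\|v\|_4^4$, and reinserting that bound into the time–integrated inequality gives the stated bound for $\int_0^t\||v||\nabla v|\|_2^2\,ds$.

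The main obstacle is the pressure term. It is tempting to integrate by parts and move a derivative onto $|v|^2v$, but that route seems to demand $L^4$-control of $\nabla v$, or of $\nabla^2 v$, which is not available at this stage. The point of the argument sketched above is instead to keep $\nabla_Hp$ intact, use the hydrostatic balance and the barotropic equation to realize $\nabla_Hp$ as a Calder\'on--Zygmund transform of a quadratic expression in $v$ — so that the derivative it carries is exactly the dissipative quantity $\||v||\nabla v|\|_2$ — and to pay only a cheap horizontal $W^{1,1}\hookrightarrow L^2$ estimate on the remaining, genuinely three–dimensional factor $\int_{-1}^1|v|^2v\,dz$.
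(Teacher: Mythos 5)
Your proof is correct and follows essentially the same route as the paper: the $L^4$ energy identity with the convective term vanishing, the reduction of $\nabla_Hp$ via the hydrostatic balance and the vertically averaged equation to the elliptic bound $\|\nabla_Hp\|_{2,M}\leq C\||v|\nabla_Hv\|_2$, a two-dimensional Ladyzhenskaya-type control of the vertically integrated cubic term, and Gronwall combined with the basic energy estimate. The only (harmless) difference is that where the paper invokes Lemma \ref{LADTYPE} to estimate $\int_M\bigl(\int_{-1}^1|v|^3dz\bigr)|\nabla_Hp|\,dxdy$, you use Cauchy--Schwarz on $M$ together with the critical embedding $W^{1,1}(M)\hookrightarrow L^2(M)$, which is the same two-dimensional interpolation in a slightly different packaging and in fact yields the marginally sharper exponent $e^{C\|v_0\|_2^2}$.
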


\begin{proof}
  Multiplying equation (\ref{PE-1}) by $|v|^2v$, integrating the resultant over $\Omega$, then it follows from integration by parts that
  \begin{align}
  \frac14\frac{d}{dt}\|v\|_4^4&+\int_\Omega|v|^2(|\nabla v|^2+2|\nabla|v||^2)dxdydz\nonumber\\
  =&-\int_\Omega|v|^2v\cdot\nabla_Hp(x,y,t)dxdydz.\label{pe1}
  \end{align}
  By Lemma \ref{LADTYPE}, and using the Poincar\'e inequality, we have
  \begin{align}
    &-\int_\Omega|v|^2v\cdot\nabla_Hp(x,y,t)dxdydz\nonumber\\
    \leq&\int_M \left(\int_{-1}^1|v|^3dz\right)|\nabla_Hp(x,y,t)|dxdy\nonumber\\
    \leq&C\|\nabla_Hp\|_{2,M}\left\||v|^2\right\|_2^{\frac12}\left( \left\||v|^2\right\|_2+\left\|\nabla_H|v|^2\right\|_2\right)^{\frac12} \|v\|_2^{\frac12}(\|v\|_2+\|\nabla v\|_2)^{\frac12}\nonumber\\
    \leq&C\|\nabla_Hp\|_{2,M}\left\|v\right\|_4\left( \left\|v\right\|_4^2+\left\|\nabla_H|v|^2\right\|_2\right)^{\frac12} \|v\|_2^{\frac12}\|\nabla v\|_2^{\frac12}.\label{pe1-1}
  \end{align}
  Applying the operator $\int_{-1}^1\text{div}_H(\cdot)\,dz$ to equation (\ref{PE-1}), one obtains
  $$
  -\Delta_Hp(x,y,t)=\int_{-1}^1\nabla_H\cdot\Big(\nabla_H\cdot(v(x,y,z,t)\otimes v(x,y,z,t))\Big)dz.
  $$
  Note that $p$ can be uniquely determined by requiring $\int_\Omega p dxdy=0$, and thus,
  by the elliptic estimates, we have
  $$
  \|\nabla_Hp\|_{2,M}\leq C\left\|\int_{-1}^1\nabla_H\cdot\Big(\nabla_H\cdot(v\otimes v)\Big)dz\right\|_{2,M}\leq C\left\||v|\nabla_Hv\right\|_2.
  $$
  Thanks to the above estimate, it follows from (\ref{pe1-1}) and the Young inequality that
  \begin{align*}
  &-\int_\Omega|v|^2v\cdot\nabla_Hp(x,y,t)dxdydz\nonumber\\
    \leq&C\left\||v|\nabla_Hv\right\|_2\left\|v\right\|_4\left( \left\|v\right\|_4^2+\left\|\nabla_H|v|^2\right\|_2\right)^{\frac12} \|v\|_2^{\frac12}\|\nabla v\|_2^{\frac12}\\
    \leq&C(\|v\|_4^2\left\||v|\nabla_Hv\right\|_2 +\|v\|_4\left\||v|\nabla_Hv\right\|_2^{\frac32})\|v\|_2^{\frac12}\|\nabla v\|_2^{\frac12}\\
    \leq&\frac12\left\||v|\nabla v\right\|_2^2+C(\|v\|_2\|\nabla v\|_2+\|v\|_2^2\|\nabla v\|_2^2)\|v\|_4^4,
    \end{align*}
    which, substituted into (\ref{pe1}), gives
    \begin{align*}
      &\frac{d}{dt}\|v\|_4^4+2\left\||v|\nabla v\right\|_2^2\leq  C(\|v\|_2\|\nabla v\|_2+\|v\|_2^2\|\nabla v\|_2^2)\|v\|_4^4.
    \end{align*}
    Applying the Gronwall inequality to the above inequality, it follows from the H\"older inequality and Proposition \ref{pebasic} that
    \begin{align*}
      \sup_{0\leq s\leq t}\|v\|_4^4(s)+2\int_0^t\left\||v|\nabla v\right\|_2^2ds
      \leq&e^{C\int_0^t(\|v\|_2\|\nabla v\|_2+\|v\|_2^2\|\nabla v\|_2^2)ds}\|v_0\|_4^4\\
      \leq&e^{C\left[\left(\int_0^t\|v\|_2^2ds\right)^{1/2} \left(\int_0^t\|\nabla v\|_2^2ds\right)^{1/2}+\int_0^t \|v\|_2^2\|\nabla v\|_2^2)ds\right]}\|v_0\|_4^4\\
      \leq&\exp\{C(t^{1/2}e^{-\lambda_1t}\|v_0\|_2^2+\|v_0\|_2^4)\}\|v_0\|_4^4\\
      \leq&\exp\{C(\|v_0\|_2^2+\|v_0\|_2^4)\}\|v_0\|_4^4,
    \end{align*}
    proving the conclusion.
\end{proof}

Next, we work on the $L^\infty(0,\infty; L^2(\Omega))$ estimate for $\partial_zv$.

\begin{proposition}[$L^\infty(0,\infty; L^2(\Omega))$ estimate on $\partial_zv$]
  \label{pezv}
  Suppose that $v_0\in H^1(\Omega)$. Then, we have the following estimate
  \begin{align*}
    \sup_{0\leq s\leq t}\|\partial_zv\|_2^2(s)+\int_0^t\|\nabla\partial_zv\|_2^2ds
     \leq\|\partial_zv_0\|_2^2+C\|v_0\|_2^2\|v_0\|_4^8 e^{C(\|v_0\|_2^2+\|v_0\|_2^4)} ,
  \end{align*}
  for any $t\in[0,\infty)$, where $C$ is a positive constant depending only on $L_1$ and $L_2$.
\end{proposition}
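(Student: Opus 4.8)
The plan is to obtain the $L^\infty_t L^2_x$ bound on $\partial_z v$ by differentiating the horizontal momentum equation \eqref{PE-1} with respect to $z$, testing against $\partial_z v$ in $L^2(\Omega)$, and exploiting the fact that the pressure is $z$-independent, so that $\partial_z \nabla_H p = 0$ and the pressure term drops out entirely. Concretely, I would apply $\partial_z$ to \eqref{PE-1} to get
\[
\partial_t(\partial_z v) + (v\cdot\nabla_H)\partial_z v + (\partial_z v\cdot\nabla_H)v + w\,\partial_z^2 v + (\partial_z w)\partial_z v - \Delta\partial_z v = 0,
\]
and then take the $L^2$ inner product with $\partial_z v$. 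Integration by parts gives $\tfrac12\tfrac{d}{dt}\|\partial_z v\|_2^2 + \|\nabla\partial_z v\|_2^2 = -\int_\Omega (\partial_z v\cdot\nabla_H)v\cdot\partial_z v - \int_\Omega \big(w\,\partial_z^2 v + (\partial_z w)\partial_z v\big)\cdot\partial_z v$. Using $\partial_z w = -\nabla_H\cdot v$ and integrating by parts in $z$ in the transport term (noting $\int_\Omega (v\cdot\nabla_H)\partial_z v\cdot\partial_z v = 0$ by incompressibility of $v$ in the horizontal variables together with periodicity), the genuinely dangerous terms reduce to integrals of the schematic form $\int_\Omega |\nabla_H v|\,|\partial_z v|^2$ and $\int_\Omega |w|\,|\partial_z v|\,|\nabla\partial_z v|$.

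Next I would estimate these two terms using the anisotropic Ladyzhenskaya-type inequality of Lemma \ref{LADTYPE}, which is exactly the tool the authors flag as being used throughout. For the first term, I write $\int_\Omega |\nabla_H v||\partial_z v|^2 \le \int_M\big(\int_{-1}^1|\nabla_H v|\,dz\big)\big(\int_{-1}^1|\partial_z v|^2\,dz\big)dxdy$ — no, more carefully one splits so that one factor carries a full integral in $z$ — and applies Lemma \ref{LADTYPE} with $f = |\nabla_H v|$ (or better, a factor involving $\partial_z v$ whose vertical integral one controls via $\partial_z v$ and $\partial_z^2 v$), $g,h$ built from $\partial_z v$, landing a bound of the form $C\|\nabla v\|_2^{1/2}\|\nabla_H\nabla v\|_2^{1/2}\|\partial_z v\|_2\|\nabla\partial_z v\|_2$ or similar, after which Young's inequality absorbs $\|\nabla\partial_z v\|_2^2$ into the left-hand side. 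For the term with $w$, I use $w = -\int_0^z\nabla_H\cdot v\,dz'$ so that $\|w\|$-type quantities are controlled by horizontal derivatives of $v$; again Lemma \ref{LADTYPE} converts $\int_\Omega |w||\partial_z v||\nabla\partial_z v|$ into a product that, after Young, leaves a remainder of the form $C\,h(t)\|\partial_z v\|_2^2$ plus a small multiple of $\|\nabla\partial_z v\|_2^2$, where $h(t)$ is a combination of $\|v\|_2,\|\nabla v\|_2,\|v\|_4$ and their products. The point of having Propositions \ref{pebasic} and \ref{pel4} available is that all these quantities are either uniformly bounded or time-integrable: $\int_0^\infty\|\nabla v\|_2^2\,ds \le \tfrac12\|v_0\|_2^2$, $\sup_s\|v\|_4^4 \le e^{C(\|v_0\|_2^2+\|v_0\|_2^4)}\|v_0\|_4^4$, and $\int_0^\infty\||v|\nabla v\|_2^2\,ds$ is likewise controlled.

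Having reached a differential inequality $\tfrac{d}{dt}\|\partial_z v\|_2^2 + \|\nabla\partial_z v\|_2^2 \le h(t)\|\partial_z v\|_2^2 + g(t)$ with $\int_0^\infty h\,ds \le C(\|v_0\|_2^2+\|v_0\|_2^4)$-type bound and $\int_0^\infty g\,ds$ similarly bounded by $C\|v_0\|_2^2\|v_0\|_4^8 e^{C(\|v_0\|_2^2+\|v_0\|_2^4)}$ (using, e.g., Gagliardo--Nirenberg to write $\|v\|_4^4$ or $\||v|\nabla v\|_2^2$ in terms of the controlled quantities), I close by Gronwall's inequality, which yields precisely the asserted bound with the factor $e^{C(\|v_0\|_2^2+\|v_0\|_2^4)}$ coming out in front. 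The main obstacle I anticipate is the bookkeeping in the nonlinear estimates: one must be careful to split each cubic integral so that Lemma \ref{LADTYPE} applies with the right function playing the role of $f$ (the one whose vertical integral is taken), to keep the highest-order factor $\|\nabla\partial_z v\|_2$ appearing to a power strictly less than $2$ so Young's inequality can absorb it, and to verify that the resulting coefficient functions are genuinely time-integrable over $(0,\infty)$ rather than merely locally integrable — this last point is what makes the estimate \emph{global and uniform} in time rather than just local, and it relies essentially on the exponential decay $\|v(t)\|_2^2 \le e^{-2\lambda_1 t}\|v_0\|_2^2$ from Proposition \ref{pebasic} together with the time-integrated quantities. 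A secondary subtlety is justifying the formal manipulations (differentiating the equation in $z$, integrating by parts) at the regularity level of strong solutions; here one invokes the smoothing of (PEs) away from $t=0$ and a density/approximation argument, as the authors note after citing \cite{LITITIUNIQ}.
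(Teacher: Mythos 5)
Your overall strategy (differentiate \eqref{PE-1} in $z$, test with $\partial_z v$, use that $\partial_z\nabla_H p=0$, and close globally using Propositions \ref{pebasic} and \ref{pel4}) is the right one and matches the paper's in spirit, but the specific nonlinear estimates you write down do not close, and this is where the real content of the proposition lies. First, the bound you propose for the cubic terms, ``$C\|\nabla v\|_2^{1/2}\|\nabla_H\nabla v\|_2^{1/2}\|\partial_z v\|_2\|\nabla\partial_z v\|_2$ or similar,'' contains $\|\nabla_H\nabla v\|_2$, a \emph{second}-order derivative of $v$. At this stage of the bootstrap only Propositions \ref{pebasic} and \ref{pel4} are available; the $L^2_t$ control of $\|\Delta v\|_2$ comes from Proposition \ref{pefirst}, which itself uses the present proposition. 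After Young's inequality your coefficient $h(t)$ would contain $\|\nabla v\|_2\|\nabla_H\nabla v\|_2$, whose time-integrability is exactly what you do not yet have, so the Gronwall step is circular. Second, your list of ``genuinely dangerous terms'' retains $\int_\Omega |w|\,|\partial_z v|\,|\nabla\partial_z v|$. This term cannot be closed with only first-order information on $v$: writing $w=-\int_0^z\nabla_H\cdot v\,dz'$ and applying either form of Lemma \ref{LADTYPE} forces either $\|\nabla_H^2 v\|_2$ (unavailable) or a derivative of $\nabla\partial_z v$ into the estimate. The term must not survive at all: one uses $\int_\Omega w\,\partial_z^2 v\cdot\partial_z v = -\tfrac12\int_\Omega \partial_z w\,|\partial_z v|^2$, which combines with the $(\partial_z w)\partial_z v$ and $(v\cdot\nabla_H)\partial_z v$ contributions (note your claim that $\int_\Omega (v\cdot\nabla_H)\partial_z v\cdot\partial_z v=0$ is false on its own, since $\nabla_H\cdot v\neq 0$; only the combination with the $w$-terms cancels) so that everything reduces to the single schematic term $\int_\Omega|\nabla_H v|\,|\partial_z v|^2$, i.e.\ the paper's $\int_\Omega[(\partial_z v\cdot\nabla_H)v-(\nabla_H\cdot v)\partial_z v]\cdot\partial_z v$.

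For that remaining term the paper does not use Lemma \ref{LADTYPE} at all: after one integration by parts it bounds $4\int_\Omega|\partial_z v||\nabla_H\partial_z v||v|\le 4\|\partial_z v\|_4\|v\|_4\|\nabla_H\partial_z v\|_2\le C\|v\|_4\|\partial_z v\|_2^{1/4}\|\nabla\partial_z v\|_2^{7/4}$ by the isotropic Sobolev/Gagliardo--Nirenberg inequality, absorbing via Young to get $\tfrac{d}{dt}\|\partial_z v\|_2^2+\|\nabla\partial_z v\|_2^2\le C\|v\|_4^8\|\partial_z v\|_2^2$. Note also that the coefficient $\|v\|_4^8$ is only bounded in time, not integrable, so the paper does \emph{not} apply Gronwall as you propose; it integrates directly and uses $\int_0^t\|\partial_z v\|_2^2\,ds\le\int_0^t\|\nabla v\|_2^2\,ds\le\tfrac12\|v_0\|_2^2$ from Proposition \ref{pebasic}, which is what produces the additive form $\|\partial_z v_0\|_2^2+C\|v_0\|_2^2\|v_0\|_4^8e^{C(\|v_0\|_2^2+\|v_0\|_2^4)}$ of the stated bound. (A correct anisotropic alternative does exist: applying the first inequality of Lemma \ref{LADTYPE} to $\int|\nabla_H v||\partial_z v|^2$ with $f=\partial_z v$, $g=\nabla_H v$, $h=\partial_z v$ gives $C\|\partial_z v\|_2\|\nabla_H v\|_2\|\nabla\partial_z v\|_2$, hence $h(t)=C\|\nabla_H v\|_2^2$ which \emph{is} time-integrable, and then your Gronwall plan goes through — but it yields a bound of the multiplicative form $e^{C\|v_0\|_2^2}\|\partial_z v_0\|_2^2$ rather than the one stated.) As written, your proposal would stall at the estimate of the cubic terms, so the key step needs to be redone along one of these two lines.
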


\begin{proof}
  Taking the $L^2(\Omega)$ inner product of equation (\ref{PE-1}) with $-\partial_z^2v$, it follows from integration by parts that
  \begin{align*}
    \frac12\frac{d}{dt}\|\partial_zv\|_2^2+\|\nabla\partial_zv\|_2^2 =&-\int_\Omega[(\partial_zv\cdot\nabla_H) v-\nabla_H\cdot v\partial_zv]\cdot\partial_zvdxdydz\\
    \leq&4\int_\Omega|\partial_zv||\nabla_H\partial_zv||v|dxdydz.
  \end{align*}
  By the H\"older, Sobolev, Poincar\'e and Young inequalities, we have
  \begin{align*}
    &4\int_\Omega|\partial_zv||\nabla_H\partial_zv||v|dxdydz\leq 4\|\partial_zv\|_4\|v\|_4\|\nabla_H\partial_zv\|_2\\
    \leq& C\|v\|_4\|\partial_z v\|_2^{\frac14}\|\nabla\partial_zv\|_2^{\frac74}\leq \frac12\|\nabla\partial_zv\|_2^2+C\|v\|_4^8\|\partial_zv\|_2^2.
  \end{align*}
  Therefore, one obtains
  $$
  \frac{d}{dt}\|\partial_zv\|_2^2+\|\nabla\partial_zv\|_2^2\leq C\|v\|_4^8\|\partial_zv\|_2^2,
  $$
  from which, integrating in $t$ and using Propositions \ref{pebasic}--\ref{pel4}, we have
  \begin{align*}
    \sup_{0\leq s\leq t}\|\partial_zv\|_2^2(s)+\int_0^t\|\nabla\partial_zv\|_2^2ds \leq&\|\partial_zv_0\|_2^2+C\sup_{0\leq s\leq t}\|v\|_4^8\int_0^t\|\partial_zv\|_2^2ds\\
    \leq&\|\partial_zv_0\|_2^2+C\|v_0\|_2^2\|v_0\|_4^8 e^{C(\|v_0\|_2^2+\|v_0\|_2^4)},
  \end{align*}
  proving the conclusion.
\end{proof}

Then, we can establish the $L^\infty(0,\infty; H^1(\Omega))$ estimate on $v$.

\begin{proposition}[First order energy estimate]
  \label{pefirst}
  Suppose that $v_0\in H^1(\Omega)$. Then, we have the following estimate
  \begin{align*}
    &\sup_{0\leq s\leq t}\|\nabla v\|_2^2(s)+\frac12\int_0^t(\|\Delta v\|_2^2+\|\partial_tv\|_2^2)ds\\
    \leq& \|\nabla v_0\|_2^2 \exp\left\{C\left(\|v_0\|_2^4+\|\partial_zv_0\|_2^4+\|v_0\|_2^4\|v_0\|_4^{16} e^{C(\|v_0\|_2^2+\|v_0\|_2^4)}\right)\right\},
  \end{align*}
  for any $t\in[0,\infty)$, where $C$ is a positive constant depending only on $L_1$ and $L_2$.
\end{proposition}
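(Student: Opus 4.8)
The plan is to obtain the first-order energy estimate by testing the momentum equation \eqref{PE-1} against $-\Delta v$ and then closing a Gronwall-type inequality using the lower-order bounds already established in Propositions \ref{pebasic}--\ref{pezv}. Taking the $L^2(\Omega)$ inner product of \eqref{PE-1} with $-\Delta v$ and integrating by parts (the pressure term drops because $\nabla_H\cdot\int_{-1}^1 v\,dz=0$ is preserved, so $\int_\Omega \nabla_H p\cdot\Delta v=0$ after integrating in $z$; alternatively one writes $\Delta v = \Delta_H v + \partial_z^2 v$ and handles the $\nabla_H p \cdot \Delta_H v$ term via integration by parts in the horizontal variables together with the divergence-free condition for $\bar v$), I arrive at
\begin{equation*}
\frac12\frac{d}{dt}\|\nabla v\|_2^2+\|\Delta v\|_2^2 = \int_\Omega (v\cdot\nabla_H)v\cdot\Delta v\,dxdydz + \int_\Omega w\,\partial_z v\cdot\Delta v\,dxdydz.
\end{equation*}
The whole game is to bound these two trilinear terms by $\frac12\|\Delta v\|_2^2$ plus something of the form $g(t)\|\nabla v\|_2^2$ with $\int_0^\infty g\,dt$ controlled by the data.

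For the first term, the natural route is the anisotropic Ladyzhenskaya-type inequality of Lemma \ref{LADTYPE}: write $|v||\nabla_H v|$ and split each factor using the vertical averaging trick, i.e.\ estimate $\int_M(\int_{-1}^1|v||\nabla_H v|\,dz)(\int_{-1}^1|\Delta v|\,dz)$, which after applying Lemma \ref{LADTYPE} and Young's inequality gives a bound like $\frac18\|\Delta v\|_2^2 + C(\|v\|_4^2+\dots)\|\nabla v\|_2(\|\nabla v\|_2+\|\partial_z\nabla v\|_2)$; here the $L^\infty(0,\infty;L^4)$ bound from Proposition \ref{pel4} and the $L^2_t(\dot H^1)$ bound on $\partial_z v$ from Proposition \ref{pezv} make the time integral of the coefficient finite. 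For the second term I would write $w = -\int_0^z\nabla_H\cdot v\,dz'$ so that $w$ is controlled by $\nabla_H v$, and again apply Lemma \ref{LADTYPE} to $\int_M(\int_{-1}^1|\nabla_H v|\,dz)(\int_{-1}^1|\partial_z v||\Delta v|\,dz)$, producing $\frac18\|\Delta v\|_2^2$ plus a term involving $\|\nabla v\|_2$, $\|\Delta v\|_2^{1/2}$ (absorbed), $\|\partial_z v\|_2$ and $\|\nabla\partial_z v\|_2$ — once more the bad factors are exactly those integrable in time by Propositions \ref{pezv} and the basic energy estimate. Collecting the absorbed terms leaves $\frac12\|\Delta v\|_2^2$ on the left, and since $\partial_t v = \Delta v - (v\cdot\nabla_H)v - w\partial_z v - \nabla_H p$, the estimates just derived for the nonlinear terms plus the elliptic bound $\|\nabla_H p\|_2\le C\||v|\nabla_H v\|_2$ from the proof of Proposition \ref{pel4} also yield control of $\int_0^t\|\partial_t v\|_2^2\,ds$ by the same right-hand side, which is why the statement can include the $\frac12\int_0^t\|\partial_t v\|_2^2$ term.

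This produces a differential inequality $\frac{d}{dt}\|\nabla v\|_2^2 + \|\Delta v\|_2^2+\|\partial_t v\|_2^2 \le g(t)\|\nabla v\|_2^2$ with
\begin{equation*}
\int_0^\infty g(t)\,dt \le C\Big(\|v_0\|_2^4+\|\partial_z v_0\|_2^4+\|v_0\|_2^4\|v_0\|_4^{16}e^{C(\|v_0\|_2^2+\|v_0\|_2^4)}\Big),
\end{equation*}
so Gronwall gives exactly the claimed bound. The main obstacle is the careful bookkeeping of the trilinear terms: one must split each with the anisotropic inequality in precisely the way that, after Young's inequality, leaves the full coefficient of $\|\Delta v\|_2^2$ (and $\|\nabla\partial_z v\|_2^2$, which is controlled by Proposition \ref{pezv} in its integrated form) strictly below $1$, while arranging that every surviving factor multiplying $\|\nabla v\|_2^2$ is already known to be in $L^1_t$ uniformly in time. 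The uniformity (rather than merely local-in-time control) hinges on the exponential decay $\|v(t)\|_2^2\le e^{-2\lambda_1 t}\|v_0\|_2^2$ and the global $L^2_t$ bounds from Propositions \ref{pebasic}--\ref{pel4}, which prevent the Gronwall exponent from growing with $t$.
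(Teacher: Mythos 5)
Your overall architecture matches the paper's: an $H^1$-level energy estimate, the anisotropic inequality of Lemma \ref{LADTYPE} for the two trilinear terms, absorption of the Laplacian by Young's inequality, and a Gronwall closure whose coefficient is made integrable in time by Propositions \ref{pebasic}--\ref{pezv}. However, your proposed majorization of the first trilinear term fails as written:
$$\int_\Omega |v|\,|\nabla_H v|\,|\Delta v|\,dxdydz\ \not\le\ \int_M\Big(\int_{-1}^1|v|\,|\nabla_H v|\,dz\Big)\Big(\int_{-1}^1|\Delta v|\,dz\Big)dxdy,$$
since $\int_{-1}^1 FG\,dz\le\big(\int_{-1}^1F\,dz\big)\big(\int_{-1}^1G\,dz\big)$ is false for general nonnegative $F,G$. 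The vertical-sup trick $\sup_z|a|\le C\int_{-1}^1(|a|+|\partial_z a|)\,dz$ must be applied to a \emph{single} factor, and the only affordable choice is $|v|$ (applying it to $|\Delta v|$ or to the product $|v||\nabla_H v|$ brings in derivatives, $\partial_z\Delta v$ or $\nabla_H^2\partial_z v$ after Lemma \ref{LADTYPE}, that are not controlled). This forces the splitting the paper uses, $f=|v|+|\partial_z v|$ and $gh=|\nabla_H v|\cdot|\Delta v|$, which after Young's inequality yields the Gronwall coefficient $C(\|v\|_2^2\|\nabla v\|_2^2+\|\partial_z v\|_2^2\|\nabla\partial_z v\|_2^2)$; note that no $L^4$ norm of $v$ enters at this stage --- the $\|v_0\|_4^{16}$ in the statement arrives only through Proposition \ref{pezv}'s bound on $\int_0^\infty\|\partial_zv\|_2^2\|\nabla\partial_zv\|_2^2\,dt$. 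Your treatment of the $w\partial_zv$ term (writing $w=-\int_0^z\nabla_H\cdot v\,dz'$ and pairing $|\partial_zv|$ with $|\Delta v|$) is the correct one.

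A second, smaller issue: recovering $\int_0^t\|\partial_tv\|_2^2\,ds$ a posteriori from the equation requires genuine $L^2(\Omega)$-in-space bounds on $(v\cdot\nabla_H)v$, $w\partial_zv$ and $\nabla_Hp$ --- not the trilinear estimates you derived --- and would leave additive contributions (e.g.\ $\int_0^t\||v|\nabla v\|_2^2\,ds$ from Proposition \ref{pel4}) that do not fit the purely multiplicative form $\|\nabla v_0\|_2^2\exp\{\cdots\}$ claimed in the statement. The paper sidesteps this by testing \eqref{PE-1} with $\partial_tv-\Delta v$, so that $\|\partial_tv\|_2^2$ sits on the left-hand side from the start and the nonlinearities are only ever paired with $|\Delta v|+|\partial_tv|$, which is absorbed exactly as $|\Delta v|$ alone would be; I recommend adopting that test function.
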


\begin{proof}
  Taking the $L^2(\Omega)$ inner product to equation (\ref{PE-1}) with $\partial_tv-\Delta v$, then it follows from integration by parts that
  \begin{align}
    &\frac{d}{dt}\|\nabla v\|_2^2+\|\Delta v\|_2^2+\|\partial_tv\|_2^2=\int_\Omega[(v\cdot\nabla_H)v+w\partial_zv]\cdot (\Delta v-\partial_tv)dxdydz. \label{pe2}
  \end{align}
  By Lemma \ref{LADTYPE}, it follows from the Poincar\'e and Young inequalities that
  \begin{align}
    &\int_\Omega(v\cdot\nabla_H)v\cdot(\Delta v-\partial_tv)dxdydz\nonumber\\
    \leq&\int_M\left(\int_{-1}^1(|v|+|\partial_zv|)dz\right)\left(\int_{-1}^1|\nabla_H v|(|\Delta v|+|\partial_tv|)dz\right)dxdy\nonumber\\
    \leq&C\left[\|v\|_2^{\frac12}(\|v\|_2+\|\nabla_Hv\|_2)^{\frac12}+\|\partial_z v\|_2^{\frac12}(\|\partial_zv\|_2+\|\nabla_H\partial_zv\|_2)^{\frac12}\right]
    \nonumber\\
    &\times\|\nabla_Hv\|_2^{\frac12}(\|\nabla_Hv\|_2+\|\nabla_H^2v\|_2)^{\frac12} (\|\Delta v\|_2+\|\partial_tv\|_2)\nonumber\\
    \leq&C(\|v\|_2^{\frac12}\|\nabla v\|_2^{\frac12}+\|\partial_zv\|_2^{\frac12}\|\nabla\partial_zv\|_2^{\frac12}) \|\nabla_Hv\|_2\|\Delta v\|_2^{\frac12}(\|\Delta v\|_2+\|\partial_tv\|_2)\nonumber\\
    \leq&\frac14(\|\Delta v\|_2^2+\|\partial_tv\|_2^2)+C(\|v\|_2^2\|\nabla v\|_2^2+\|\partial_z v\|_2^2\|\nabla\partial_zv\|_2^2)\|\nabla v\|_2^2.\label{pe2-1}
  \end{align}
  Since $w$ is odd in $z$, it has $w|_{z=0}=0$, and thus
  $$
  w(x,y,z,t)=\int_0^z\partial_zw(x,y,z',t)dz'=-\int_0^z\nabla_H\cdot v(x,y,z',t)dz'.
  $$
  Thanks to this, it follows from Lemma \ref{LADTYPE}, the Poincar\'e and Young inequalities that
  \begin{align}
    &\int_\Omega w\partial_zv\cdot(\Delta v-\partial_tv)dxdydz\nonumber\\
    \leq&\int_M\left(\int_{-1}^1|\nabla_H\cdot v|dz\right)\left(\int_{-1}^1|\partial_zv| (|\Delta v|+|\partial_tv|)dz\right)dxdy\nonumber\\
    \leq&C\|\nabla_Hv\|_2^{\frac12}(\|\nabla_Hv\|_2+\|\nabla_H^2v\|_2)^{\frac12} \|\partial_zv\|_2^{\frac12}\nonumber\\
    &\times(\|\partial_zv\|_2+\|\nabla_H\partial_zv\|_2) ^{\frac12}(\|\Delta v\|_2+\|\partial_tv\|_2)\nonumber\\
    \leq&C\|\nabla_Hv\|_2^{\frac12}\|\Delta_Hv\|_2^{\frac12}\|\partial_zv\|_2 ^{\frac12}\|\nabla\partial_zv\|_2^{\frac12}(\|\Delta v\|_2+\|\partial_tv\|_2)\nonumber\\
    \leq&\frac14(\|\Delta v\|_2^2+\|\partial_tv\|_2^2)+C\|\partial_zv\|_2^2\|\nabla\partial_zv\|_2^2\|\nabla v\|_2^2.\label{pe2-2}
  \end{align}
  Substituting (\ref{pe2-1})--(\ref{pe2-2}) into (\ref{pe2}) yields
  \begin{align*}
    \frac{d}{dt}\|\nabla v\|_2^2+\frac12(\|\Delta v\|_2^2+\|\partial_tv\|_2^2)\leq C(\|v\|_2^2\|\nabla v\|_2^2+\|\partial_zv\|_2^2\|\nabla\partial_zv\|_2^2)\|\nabla v\|_2^2,
  \end{align*}
  from which, by the Gronwall inequality, it follows from Propositions \ref{pebasic} and \ref{pezv} that
  \begin{align*}
    &\sup_{0\leq s\leq t}\|\nabla v\|_2^2(s)+\frac12\int_0^t(\|\Delta v\|_2^2+\|\partial_tv\|_2^2)ds\\
    \leq&\exp\left\{C\int_0^t(\|v\|_2^2\|\nabla v\|_2^2+\|\partial_zv\|_2^2\|\nabla\partial_zv\|_2^2)ds\right\}\|\nabla v_0\|_2^2\\
    \leq&\|\nabla v_0\|_2^2 \exp\left\{C\left(\|v_0\|_2^4+\|\partial_zv_0\|_2^4+\|v_0\|_2^4\|v_0\|_4^{16} e^{C(\|v_0\|_2^2+\|v_0\|_2^4)}\right)\right\},
  \end{align*}
  proving the conclusion.
\end{proof}

And finally, in case that $v_0\in H^2(\Omega)$, we can obtain the second order energy estimate on $v$.

\begin{proposition}[Second order energy estimate]
  \label{pesecond}
  Suppose that $v_0\in H^2(\Omega)$. Then, we have
  \begin{align*}
    &\sup_{0\leq s\leq t}\|\Delta v\|_2^2(s)+\frac12\int_0^t(\|\nabla\Delta v\|_2^2+\|\nabla\partial_tv\|_2^2)ds\\
    \leq&\exp\left\{C\|\nabla v_0\|_2^4e^{C\left(\|v_0\|_2^4+\|\partial_z v_0\|_2^4+\|v_0\|_2^4\|v_0\|_4^{16}e^{C(\|v_0\|_2^2+\|v_0\|_2^4)}\right) }\right\}\|\Delta v_0\|_2^2,
  \end{align*}
  for any $t\in[0,\infty)$, where $C$ is a positive constant depending only on $L_1$ and $L_2$.
\end{proposition}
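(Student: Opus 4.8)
The plan is to carry the anisotropic energy hierarchy of Propositions~\ref{pebasic}--\ref{pefirst} one order higher: I would take the $L^2(\Omega)$ inner product of equation (\ref{PE-1}) with $\Delta^2v-\Delta\partial_tv$, which is exactly the analogue, at the next level of regularity, of testing (\ref{PE-1}) with $\partial_tv-\Delta v$ in Proposition~\ref{pefirst}. Since $v_0\in H^2(\Omega)$ and the strong solution is smooth for $t>0$ by the smoothing effect of (PEs) recalled above, this computation is legitimate, and the resulting estimate extends down to $t=0$ by continuity (formally one first argues for a smooth approximating sequence and passes to the limit). Integration by parts and spatial periodicity give
$$
\big\langle\partial_tv-\Delta v,\;\Delta^2v-\Delta\partial_tv\big\rangle=\frac{d}{dt}\|\Delta v\|_2^2+\|\nabla\Delta v\|_2^2+\|\nabla\partial_tv\|_2^2,
$$
and, exactly as in Propositions~\ref{pebasic}, \ref{pezv} and \ref{pefirst}, the pressure term drops out: since $p=p(x,y,t)$ is independent of $z$, $\int_\Omega\nabla_Hp\cdot\Delta^2v\,dxdydz=\int_M\nabla_Hp\cdot\big(\int_{-1}^1\Delta^2v\,dz\big)dxdy$, and by $z$-periodicity $\int_{-1}^1\Delta^2v\,dz=\Delta_H^2\bar v$ with $\bar v:=\int_{-1}^1v\,dz$, so this equals $-\int_M(\nabla_H\cdot\Delta_H^2\bar v)\,p\,dxdy=0$ because $\nabla_H\cdot\bar v=0$ is preserved by the dynamics; the $\Delta\partial_tv$ term vanishes for the same reason. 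Hence one arrives at
$$
\frac{d}{dt}\|\Delta v\|_2^2+\|\nabla\Delta v\|_2^2+\|\nabla\partial_tv\|_2^2=\int_\Omega\nabla\!\big[(v\cdot\nabla_H)v+w\partial_zv\big]:\big(\nabla\Delta v-\nabla\partial_tv\big)\,dxdydz.
$$

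Next I would estimate the right-hand side. Expanding the gradient of the nonlinearity produces terms of the schematic types $\nabla v\,\nabla v$, $v\,\nabla^2v$, $(\nabla_Hw)\,\partial_zv$ and $w\,\nabla\partial_zv$; as in the proofs of Propositions~\ref{pezv} and \ref{pefirst}, I would replace everywhere
$$
w(x,y,z,t)=-\int_0^z\nabla_H\cdot v\,dz',\qquad\nabla_Hw(x,y,z,t)=-\int_0^z\nabla_H(\nabla_H\cdot v)\,dz',
$$
so that each occurrence of the vertical velocity is traded for a vertical integral of horizontal derivatives of $v$. Each resulting trilinear integral is then bounded by the anisotropic Ladyzhenskaya-type inequality of Lemma~\ref{LADTYPE}, together with the Sobolev, Poincar\'e and Young inequalities, the aim being to absorb a small multiple of $\|\nabla\Delta v\|_2^2+\|\nabla\partial_tv\|_2^2$ into the left-hand side and to be left with a differential inequality
$$
\frac{d}{dt}\|\Delta v\|_2^2+\tfrac12\big(\|\nabla\Delta v\|_2^2+\|\nabla\partial_tv\|_2^2\big)\le C\,\alpha(t)\,\|\Delta v\|_2^2,
$$
in which $\alpha$ is a combination of quantities already controlled by Propositions~\ref{pebasic}--\ref{pefirst}; the point is that $\alpha\in L^1(0,\infty)$, either because it is a product of a uniformly bounded factor ($\|v\|_2$, $\|v\|_4$, $\|\nabla v\|_2$ or $\|\partial_zv\|_2$) with a time-integrable factor ($\|\Delta v\|_2^2$ or $\|\nabla\partial_zv\|_2^2$), or because it is a product of two factors each in $L^2(0,\infty)$, so that a Cauchy--Schwarz in time gives a finite integral. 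Once this is in hand, the Gronwall inequality, together with $\|\Delta v\|_2^2(0)=\|\Delta v_0\|_2^2$ and the explicit bounds on $\int_0^\infty\alpha\,dt$ supplied by Propositions~\ref{pebasic}--\ref{pefirst}, yields the asserted estimate, including the finiteness of $\int_0^\infty(\|\nabla\Delta v\|_2^2+\|\nabla\partial_tv\|_2^2)\,dt$.

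I expect the only real difficulty to lie in the nonlinear estimate of the second step. The subtlety is not simply to dominate the top-order factor in each trilinear term by the dissipation $\|\nabla\Delta v\|_2^2+\|\nabla\partial_tv\|_2^2$, but to arrange that the \emph{remaining} coefficient in front of $\|\Delta v\|_2^2$ is \emph{time-integrable} and not merely bounded: a naive splitting of the derivatives produces a coefficient such as $\|v\|_4^{8/3}$, which is finite by Proposition~\ref{pel4} but lies only in $L^\infty(0,\infty)$, and would give a bound growing exponentially in $t$ rather than the claimed uniform-in-time one. The contributions containing $w$ are the most delicate, because $w$ already carries one horizontal derivative of $v$ and $\nabla_Hw$ carries two, so they must be treated through the anisotropic structure of Lemma~\ref{LADTYPE}, distributing the $z$-integrations as in Propositions~\ref{pezv} and \ref{pefirst}, rather than through isotropic three-dimensional Sobolev embeddings. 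The verification that the pressure term vanishes and that the fourth-order computation is meaningful for $t>0$ are routine and need only be mentioned.
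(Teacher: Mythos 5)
Your proposal follows the paper's proof essentially verbatim: the paper tests (\ref{PE-1}) with $\Delta(\Delta v-\partial_tv)=\Delta^2v-\Delta\partial_tv$, obtains the same energy identity, and bounds the trilinear terms via Lemma~\ref{ladlem} (which is precisely Lemma~\ref{LADTYPE} combined with the representation $w=-\int_0^z\nabla_H\cdot v\,dz'$ that you propose to invoke directly), arriving at $\frac{d}{dt}\|\Delta v\|_2^2+\frac12\big(\|\nabla\Delta v\|_2^2+\|\nabla\partial_tv\|_2^2\big)\le C\|\nabla v\|_2^2\|\Delta v\|_2^4$ and concluding by Gronwall with the coefficient $\|\nabla v\|_2^2\|\Delta v\|_2^2$, which is in $L^1(0,\infty)$ by Proposition~\ref{pefirst}. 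The difficulty you flag is resolved exactly as you anticipate: the leftover coefficient is a product of a uniformly bounded factor and a time-integrable one.
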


\begin{proof}
  Taking the $L^2(\Omega)$ inner product to equation (\ref{PE-1}) with $\Delta(\Delta v-\partial_tv)$ and integration by parts, then it follows from Lemma \ref{ladlem} and the Young inequality that
  \begin{align*}
    &\frac{d}{dt}\|\Delta v\|_2^2+\|\nabla\Delta v\|_2^2+\|\nabla\partial_t v\|_2^2\\
    =&\int_\Omega\nabla[(u\cdot\nabla)v]:\nabla(\Delta v-\partial_tv)dxdydz\\
    =&\int_\Omega[(\partial_iu\cdot\nabla)v+(u\cdot\partial_i\nabla) v]\cdot \partial_i(\Delta v-\partial_tv)dxdydz\\
    \leq&C(\|\partial_i\nabla v\|_2^{\frac12}\|\partial_i\Delta v\|_2^{\frac12}\|\nabla v\|_2^{\frac12}\|\Delta v\|_2^{\frac12}+\|\nabla v\|_2^{\frac12}\|\Delta v\|_2^{\frac12}\\
    &\times\|\partial_i\nabla v\|_2^{\frac12}\|\partial_i\Delta v\|_2^{\frac12})(\|\partial_i\Delta v\|_2+\|\partial_i\partial_tv\|_2)\\
    \leq&\frac{1}{2}(\|\nabla\Delta v\|_2^2+\|\nabla\partial_tv\|_2^2)+C\|\nabla v\|_2^2\|\Delta v\|_2^4,
  \end{align*}
  and thus
  $$
  \frac{d}{dt}\|\Delta v\|_2^2+\frac12(\|\nabla\Delta v\|_2^2+\|\nabla\partial_t v\|_2^2)\leq C\|\nabla v\|_2^2\|\Delta v\|_2^4.
  $$
  Applying the Gronwall inequality to the above inequality, it follows from Proposition \ref{pefirst} that
  \begin{align*}
    &\sup_{0\leq s\leq t}\|\Delta v\|_2^2(s)+\frac12\int_0^t(\|\nabla\Delta v\|_2^2+\|\nabla\partial_tv\|_2^2)ds\\
    \leq&\exp\left\{C\int_0^t\|\nabla v\|_2^2\|\Delta v\|_2^2ds\right\}\|\Delta v_0\|_2^2\\
    \leq&\exp\left\{C\|\nabla v_0\|_2^4e^{C\left(\|v_0\|_2^4+\|\partial_z v_0\|_2^4+\|v_0\|_2^4\|v_0\|_4^{16}e^{C(\|v_0\|_2^2+\|v_0\|_2^4)}\right) }\right\}\|\Delta v_0\|_2^2,
  \end{align*}
  proving the conclusion.
\end{proof}

As a direct corollary of Propositions \ref{pebasic}--\ref{pesecond}, we have the following:

\begin{corollary}
  \label{cor}
Suppose that $v_0\in H^m$, with $m=1$ or $m=2$. Let $(v,w)$ be the unique global strong solution to the primitive equations (\ref{PE-1})--(\ref{PE-2}), subject to (\ref{bc})--(\ref{sc}). Then, we have the following:

(i) If $v_0\in H^1(\Omega)$, then we have the estimate
$$
\sup_{0\leq t<\infty}\|v\|_{H^1}^2(t)+\int_0^\infty(\|\nabla v\|_{H^1}^2+
\|\partial_tv\|_2^2)dt\leq C(\|v_0\|_{H^1}, L_1, L_2);
$$
(ii) If $v_0\in H^2(\Omega)$, then we have the estimate
$$
\sup_{0\leq t<\infty}\|v\|_{H^2}^2(t)+\int_0^\infty(\|\nabla v\|_{H^2}^2+
\|\partial_tv\|_{H^1}^2)dt\leq C(\|v_0\|_{H^2}, L_1, L_2).
$$
\end{corollary}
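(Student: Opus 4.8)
The plan is to assemble the estimates of Propositions~\ref{pebasic}--\ref{pesecond} after two routine observations. First, since $\Omega$ is a bounded three-dimensional domain, $H^1(\Omega)\hookrightarrow L^4(\Omega)$, so $\|v_0\|_4\leq C\|v_0\|_{H^1}$; together with the trivial bounds $\|v_0\|_2,\|\partial_zv_0\|_2,\|\nabla v_0\|_2\leq\|v_0\|_{H^1}$ and, in the case $v_0\in H^2(\Omega)$, also $\|\Delta v_0\|_2\leq\|v_0\|_{H^2}$, this shows that every quantity appearing on the right-hand sides of Propositions~\ref{pebasic}--\ref{pesecond} is bounded by a continuous function of $\|v_0\|_{H^1}$ (resp.\ $\|v_0\|_{H^2}$), $L_1$ and $L_2$. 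Second, since all functions are spatially periodic, comparison of Fourier coefficients gives the identities $\|\nabla^2\phi\|_2=\|\Delta\phi\|_2$ and $\|\nabla^3\phi\|_2=\|\nabla\Delta\phi\|_2$ for any periodic $\phi$ (with the convention $\|\nabla^k\phi\|_2^2=\sum_{|\alpha|=k}\|\partial^\alpha\phi\|_2^2$); hence control of $\|\Delta v\|_2$ and $\|\nabla\Delta v\|_2$ is equivalent to control of the homogeneous $H^2$ and $H^3$ seminorms of $v$.

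For part (i), Proposition~\ref{pebasic} gives $\sup_{t\geq0}\|v\|_2^2\leq\|v_0\|_2^2$ and $\int_0^\infty\|\nabla v\|_2^2\,dt\leq\tfrac12\|v_0\|_2^2$, while Proposition~\ref{pefirst}, after the first observation and letting $t\to\infty$, gives $\sup_{t\geq0}\|\nabla v\|_2^2+\int_0^\infty(\|\Delta v\|_2^2+\|\partial_tv\|_2^2)\,dt\leq C(\|v_0\|_{H^1},L_1,L_2)$. Since $\|v\|_{H^1}^2=\|v\|_2^2+\|\nabla v\|_2^2$ and $\|\nabla v\|_{H^1}^2=\|\nabla v\|_2^2+\|\Delta v\|_2^2$ by the second observation, summing these bounds yields the estimate in (i). For part (ii) I would additionally invoke Proposition~\ref{pesecond}, which after the same reductions gives $\sup_{t\geq0}\|\Delta v\|_2^2+\int_0^\infty(\|\nabla\Delta v\|_2^2+\|\nabla\partial_tv\|_2^2)\,dt\leq C(\|v_0\|_{H^2},L_1,L_2)$; combining this with the bounds from (i) and using $\|v\|_{H^2}^2=\|v\|_2^2+\|\nabla v\|_2^2+\|\Delta v\|_2^2$, $\|\nabla v\|_{H^2}^2=\|\nabla v\|_2^2+\|\Delta v\|_2^2+\|\nabla\Delta v\|_2^2$, and $\|\partial_tv\|_{H^1}^2=\|\partial_tv\|_2^2+\|\nabla\partial_tv\|_2^2$ gives the estimate in (ii).

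There is no genuine obstacle here: the corollary is a bookkeeping consequence of the preceding propositions. The only two points deserving a word of care are the reduction of their rather unwieldy right-hand sides to functions of $\|v_0\|_{H^m}$ alone, which uses only the Sobolev embedding above and the monotonicity of the nested exponentials appearing in those bounds, and the passage from the Laplacian-based norms to full Sobolev norms, which on the periodic box is an exact Fourier-space identity rather than a genuine elliptic-regularity estimate.
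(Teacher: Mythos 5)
Your proposal is correct and matches the paper's intent exactly: the paper offers no written proof, stating only that the corollary is "a direct corollary of Propositions \ref{pebasic}--\ref{pesecond}", and your argument supplies precisely the intended bookkeeping (Sobolev embedding $H^1(\Omega)\hookrightarrow L^4(\Omega)$ to absorb $\|v_0\|_4$ into $\|v_0\|_{H^1}$, and the periodic Fourier identities equating $\|\Delta v\|_2$ and $\|\nabla\Delta v\|_2$ with the corresponding homogeneous Sobolev seminorms). No issues.
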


\section{Strong convergence I: the $H^1$ initial data case}
\label{sec4}
This section is devoted to the strong convergence of (SNS) to (PEs), with initial data $v_0\in H^1(\Omega)$, in other words, we give the proof of Theorem \ref{thm0}.

Let the initial data $v_0\in H^1(\Omega)$, and assume
\begin{equation}\label{E1}
\nabla_H\cdot\left(\int_{-1}^1 v_0(x,y,z)dz\right)=0,\quad\mbox{for all }(x,y)\in M,
\end{equation}
by the $H^1$ theory of the primitive equations, see \cite{CAOTITI07}, there is a unique global strong solution $(v,w)$ to (PEs), subject to the boundary and initial conditions (\ref{bc})--(\ref{sc}), such that
\begin{eqnarray}
&v\in C([0,\infty); H^1(\Omega))\cap L^2_{\text{loc}}([0,\infty); H^2(\Omega)),\quad\partial_tv\in L^2_{\text{loc}}([0,\infty); L^2(\Omega)).\label{ne0}
\end{eqnarray}
Then, using the boundary condition (\ref{bc}) and the symmetry condition (\ref{sc}), the vertical component $w$ of the velocity can be uniquely determined as
\begin{equation}\label{*}
w(x,y,z,t)=-\int_0^z\nabla_H\cdot v(x,y,z',t)dz'.
\end{equation}

Set $u_0=(v_0, w_0)$, with $w_0$ given by (\ref{ne00}).
Then, it is obviously that $u_0\in L^2(\Omega)$
and $\nabla\cdot u_0=0$. Therefore, following the same arguments as those for the
standard Navier-Stokes equations, see, e.g., \cite{TEMAMNS,CONFO},
one can prove that there is a global weak solution, denoted by $u_\varepsilon=(v_\varepsilon, w_\varepsilon)$, to the scaled Navier-Stokes equations (\ref{SNS}), subject to the boundary and initial conditions (\ref{bc})--(\ref{sc}).

We are going to estimate the difference between $(v_\varepsilon, w_\varepsilon)$ and $(v,w)$. As a preparation, we need the following proposition, which, as it will be shown in the proof, is essentially obtained by testing the (SNS) against $(v,w)$.

\begin{proposition}\label{prop0}
Let $(v_\varepsilon, w_\varepsilon)$ and $(v,w)$ be the solutions of (SNS) and (PEs), with initial data $(v_0, w_0)$, $v_0\in H^1(\Omega)$ satisfying (\ref{E1}) and
$$
w_0(x,y,z)=-\int_0^z\nabla_H\cdot v_0(x,y,z') dz'.
$$
Then, the following integral equality holds
\begin{align}
  &-\frac{\varepsilon^2}{2}\|w(t)\|_2^2+\left(\int_\Omega v_\varepsilon\cdot v+\varepsilon^2w_\varepsilon w) dxdydz\right)(t)\nonumber\\
  &+\int_{Q_{t}}(-v_\varepsilon\cdot\partial_tv+\nabla v_\varepsilon:\nabla v +\varepsilon^2\nabla w_\varepsilon\cdot\nabla w) dxdydzds\nonumber\\
  =&\frac{\varepsilon^2}{2}\|w_0\|_2^2+\|v_0\|_2^2+
  \varepsilon^2\int_{Q_{t}} \left(\int_0^z \partial_tvdz'\right) \cdot\nabla_HW_\varepsilon dxdydzds\nonumber\\
  &-\int_{Q_{t}} [(u_\varepsilon\cdot\nabla)v_\varepsilon\cdot v+\varepsilon^2 u_\varepsilon\cdot\nabla w_\varepsilon w] dxdydzds,
  \label{ne4}
\end{align}
for any $t\in[0,\infty)$, where $Q_t=\Omega\times(0,t)$.
\end{proposition}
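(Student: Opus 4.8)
The identity (\ref{ne4}) is, as the authors anticipate, nothing but the statement obtained by using the global strong solution $(v,w)$ of (PEs) as a test function in the Leray--Hopf weak formulation of (SNS), after the term $\varepsilon^2\int_{Q_t}w_\varepsilon\partial_tw$ has been rewritten by means of the hydrostatic relation (\ref{*}). Accordingly, the plan splits into three parts: (i) show that $(v,w)$ is an admissible test function in Definition \ref{def}(iii) on a finite time window; (ii) read off the resulting integral identity; (iii) transform the $w_\varepsilon\partial_tw$ term.

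\emph{Step (i).} The pair $(v,w)$ is spatially periodic and is divergence-free in the full three-dimensional sense, because $\nabla\cdot(v,w)=\nabla_H\cdot v+\partial_zw=0$ by (\ref{PE-2}); moreover it carries the regularity of the $H^1$-strong solution of (PEs), namely $v\in C([0,\infty);H^1(\Omega))\cap L^2_{\mathrm{loc}}([0,\infty);H^2(\Omega))$ and $\partial_tv\in L^2_{\mathrm{loc}}([0,\infty);L^2(\Omega))$, with $w$, $\nabla w$ and $\partial_tw$ controlled through (\ref{*}); and by the smoothing effect of (PEs), $v$ is in fact smooth for $t>0$. Thus $(v,w)$ is not literally in $C_0^\infty(\overline\Omega\times[0,\infty))$, but it can be reached as a limit of such functions: mollify in the spatial variables (preserving periodicity and the divergence-free constraint), multiply by a smooth temporal cut-off equal to $1$ on $[0,t-\delta]$ and vanishing past $t$, insert into the identity of Definition \ref{def}(iii), and pass to the limit using the energy bound of Definition \ref{def}(ii), the weak-in-time continuity $u_\varepsilon\in C_w([0,\infty);L^2_\sigma(\Omega))$, and $u_\varepsilon\in L^2_{\mathrm{loc}}([0,\infty);H^1(\Omega))$. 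This is exactly the Serrin-type ``weak--strong'' testing device alluded to in the introduction, and it is the only genuinely delicate point.

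\emph{Step (ii).} Carrying this out, and using $v(\cdot,0)=v_0$, $w(\cdot,0)=w_0$, one obtains, for a.e.\ $t$ and hence (by the continuity properties just recalled) for every $t\in[0,\infty)$,
\[
\int_\Omega(v_\varepsilon\cdot v+\varepsilon^2w_\varepsilon w)(t)+\int_{Q_t}\bigl(-v_\varepsilon\cdot\partial_tv+\nabla v_\varepsilon:\nabla v+\varepsilon^2\nabla w_\varepsilon\cdot\nabla w\bigr)=\|v_0\|_2^2+\varepsilon^2\|w_0\|_2^2+\varepsilon^2\int_{Q_t}w_\varepsilon\partial_tw-\int_{Q_t}\bigl((u_\varepsilon\cdot\nabla)v_\varepsilon\cdot v+\varepsilon^2u_\varepsilon\cdot\nabla w_\varepsilon\,w\bigr).
\]
\emph{Step (iii).} Differentiating (\ref{*}) in $t$ gives $\partial_tw=-\nabla_H\cdot\bigl(\int_0^z\partial_tv\,dz'\bigr)$, so that, integrating by parts in the periodic horizontal variables and writing $w_\varepsilon=W_\varepsilon+w$,
\[
\int_{Q_t}w_\varepsilon\partial_tw=\int_{Q_t}\Bigl(\int_0^z\partial_tv\,dz'\Bigr)\cdot\nabla_Hw_\varepsilon=\int_{Q_t}\Bigl(\int_0^z\partial_tv\,dz'\Bigr)\cdot\nabla_HW_\varepsilon+\int_{Q_t}w\,\partial_tw,
\]
where in the last equality the horizontal integration by parts was undone on the piece involving $\nabla_Hw$. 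Since $\int_{Q_t}w\,\partial_tw=\tfrac12\|w(t)\|_2^2-\tfrac12\|w_0\|_2^2$ (justified by the regularity of $w$ through (\ref{*}), or by a Lions--Magenes-type lemma), substituting into Step (ii), moving $-\tfrac{\varepsilon^2}{2}\|w(t)\|_2^2$ to the left, and combining $\varepsilon^2\|w_0\|_2^2-\tfrac{\varepsilon^2}{2}\|w_0\|_2^2=\tfrac{\varepsilon^2}{2}\|w_0\|_2^2$, one arrives at precisely (\ref{ne4}).

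The main obstacle is Step (i): rigorously legitimizing the use of a non-smooth, non-compactly-in-time-supported strong solution of (PEs) as a test function against a Leray--Hopf solution of (SNS), which has no regularity to spare. Apart from that, one should only record that every integral appearing in (\ref{ne4}) is finite: $u_\varepsilon=(v_\varepsilon,w_\varepsilon)\in L^\infty(0,\infty;L^2(\Omega))\cap L^2_{\mathrm{loc}}(0,\infty;H^1(\Omega))$ controls $\nabla w_\varepsilon$ and hence $\nabla_HW_\varepsilon$; the function $\int_0^z\partial_tv\,dz'$ lies in $L^2_{\mathrm{loc}}(0,\infty;L^2(\Omega))$ because $\partial_tv\in L^2_{\mathrm{loc}}(0,\infty;L^2(\Omega))$; and the nonlinear terms are integrable by the Sobolev embeddings standard in Leray--Hopf theory together with the bounds of Corollary \ref{cor}. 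Everything beyond Step (i) is bookkeeping and routine integration by parts.
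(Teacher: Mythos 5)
Your route is the paper's route: use the Serrin weak--strong device, i.e.\ insert $(v,w)\chi_\delta$ into the Leray--Hopf formulation of (SNS), let the temporal cut-off $\chi_\delta$ converge to the indicator of $[0,t]$ using the weak $L^2$-continuity of $u_\varepsilon$ and the strong continuity of $v$, and then convert the pairing $\varepsilon^2\langle\partial_tw,w_\varepsilon\rangle$ by writing $\partial_tw=-\nabla_H\cdot\bigl(\int_0^z\partial_tv\,dz'\bigr)$, splitting $w_\varepsilon=W_\varepsilon+w$, and applying the Lions--Magenes lemma to $\langle\partial_tw,w\rangle$. Your Step~(iii) bookkeeping reproduces (\ref{ne4}) exactly, and your treatment of the $\partial_t w$ term as an $H^{-1}\times H^1$ pairing is the same as the paper's.

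The one assertion that does not close as written is that ``the nonlinear terms are integrable by the Sobolev embeddings standard in Leray--Hopf theory together with the bounds of Corollary \ref{cor}.'' This is fine for $(u_\varepsilon\cdot\nabla)v_\varepsilon\cdot v$ (there $u_\varepsilon\in L^{10/3}(Q_T)$, $\nabla v_\varepsilon\in L^2(Q_T)$, $v\in L^\infty(0,T;L^6)$, and $\tfrac{3}{10}+\tfrac12+\tfrac16<1$), but it fails for $\varepsilon^2\,u_\varepsilon\cdot\nabla w_\varepsilon\,w$: since $\nabla_Hw$ costs two horizontal derivatives of $v$, the relation (\ref{*}) only gives $w\in L^\infty(0,T;L^2)\cap L^2(0,T;H^1)\hookrightarrow L^{10/3}(Q_T)$, and with $u_\varepsilon\in L^{10/3}(Q_T)$, $\nabla w_\varepsilon\in L^2(Q_T)$ the H\"older exponents add up to $\tfrac{3}{10}+\tfrac12+\tfrac{3}{10}=\tfrac{11}{10}>1$, so isotropic embeddings do not give $L^1(Q_T)$. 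The paper handles precisely this term by substituting $w=-\int_0^z\nabla_H\cdot v\,dz'$ and invoking the anisotropic Ladyzhenskaya inequality of Lemma \ref{LADTYPE}, which yields the bound $C\|u_\varepsilon\|_2^{1/2}\|\nabla u_\varepsilon\|_2^{1/2}\|\nabla w_\varepsilon\|_2\|\nabla_Hv\|_2^{1/2}\|\Delta v\|_2^{1/2}$, integrable in time by H\"older since the exponents on the $L^2_t$ quantities sum to $2$. You need this (or an equivalent anisotropic estimate exploiting that $w$ is a vertical antiderivative of $\nabla_H\cdot v$) in your Step~(i) to legitimize $(v,w)$ as a test function; with that inserted, the rest of your argument is sound and coincides with the paper's.
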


\begin{proof}
We will follow the argument of Serrin \cite{SERRIN} (see also \cite{BLNNT} and the references therein). Recalling the definition of weak solutions to (SNS), the following integral identity holds
\begin{align*}
\int_Q&[-(v_\varepsilon\cdot\partial_t\varphi_H +\varepsilon^2w_\varepsilon\partial_t\varphi_3)+((u_\varepsilon\cdot\nabla) v_\varepsilon \cdot\varphi_H+\varepsilon^2u_\varepsilon\cdot\nabla w_\varepsilon\varphi_3)+\nabla v_\varepsilon:\nabla\varphi_H\\
&+ \varepsilon^2\nabla w_\varepsilon\cdot\nabla\varphi_3]dxdydzdt= \int_\Omega(v_0\cdot\varphi_H(\cdot,0) +\varepsilon^2w_0\varphi_3(\cdot,0))dxdydz,
\end{align*}
for any periodic function $\varphi=(\varphi_H,\varphi_3)$, with $\varphi_H=(\varphi_1,\varphi_2)$, such that $\nabla\cdot\varphi=0$ and $\varphi\in C_0^\infty(\overline\Omega\times[0,\infty))$, where $Q:=\Omega\times(0,\infty)$.

Let $\chi\in C_0^\infty([0,\infty))$, with $0\leq\chi\leq1$ and $\chi(0)=1$, and set $\varphi=(v,w)\chi(t)$. We remark that, by the density argument, we can choose $\varphi$ as the testing function in the above integral identity, with modifying the term
$\int_Qw\partial_t\varphi_3dxdydzdt$ as
\begin{align*}
\int_Qw_\varepsilon \partial_t(w\chi)dxdydzdt=&\int_0^\infty\langle\partial_t(w\chi), w_\varepsilon\rangle_{H^{-1}\times H^1}dt.
\end{align*}
This is valid, because, recalling the regularities of $v$, and using (\ref{*}), we only have the regularity that
$\partial_tw\in L^2_{\text{loc}}([0,\infty); H^{-1}(\Omega))$. The validity
of the integrals involving the terms $v_\varepsilon\cdot\partial_t(v\chi), \nabla v_\varepsilon:\nabla(v\chi), \nabla w_\varepsilon\cdot\nabla (w\chi)$ is obviously guaranteed by the regularities of $u_\varepsilon$
and $(v,w)$, stated in the definition of the weak solutions and
(\ref{ne0}), respectively. The validity of the integral of the term
$(u_\varepsilon\cdot\nabla)v_\varepsilon\cdot v\chi$ follows from
utilizing the H\"older inequality and noticing that
$u_\varepsilon\in L^{\frac{10}{3}}_{\text{loc}}(\overline\Omega\times[0,\infty))$
and $v\in L^\infty_{\text{loc}}([0,\infty);L^6(\Omega))$, which
are easily verified by the interpolation and the embedding inequalities.
While the validity of the integral of the term $u_\varepsilon\cdot\nabla w_\varepsilon w\chi$ follows from the following calculation: denoting by $[0,T]$ the support set of $\chi$, and recalling (\ref{*}), it follows from Lemma \ref{LADTYPE} and the H\"older inequality that
\begin{align*}
  \int_Q|u_\varepsilon||\nabla w_\varepsilon| |w|\chi
  dxdydz\leq& \int_0^T\int_\Omega|u_\varepsilon||\nabla w_\varepsilon| \left|\int_0^z\nabla_H\cdot vdz'\right|dxdydzdt\\
  \leq&\int_0^T\int_M\int_{-1}^1|u_\varepsilon||\nabla w_\varepsilon|dz\int_{-1}^1|\nabla_Hv|dzdxdydt\\
  \leq&C\int_0^T\|u_\varepsilon\|_2^{\frac12}\|\nabla u_\varepsilon\|_2^{\frac12}\|\nabla w_\varepsilon\|_2\|\nabla_Hv\|_2^{\frac12}\|\Delta v\|_2^{\frac12}dt\\
  \leq&C\left(\sup_{0\leq t\leq T}\|u_\varepsilon\|_2\|\nabla v\|_2\right)^{\frac12}\left(\int_0^T\|\nabla u_\varepsilon\|_2^2dt\right)^{\frac12}\\
  &\times\left(\int_0^T\|\nabla w_\varepsilon\|_2^2dt\right)^{\frac12}
  \left(\int_0^T\|\Delta v\|_2^2dt\right)^{\frac12},
\end{align*}
where the Poincar\"e inequality has been used.

Combining the statements in the above paragraph, by taking $\varphi=(v,w)\chi$ as a testing function, we get the following integral identity
\begin{align*}
  \int_Q&[(-v_\varepsilon\cdot\partial_tv+\nabla v_\varepsilon:\nabla v +\varepsilon^2\nabla w_\varepsilon\cdot\nabla w)\chi-v_\varepsilon\cdot v\chi']dxdydzdt\\
  &-\varepsilon^2\int_0^\infty\langle\partial_t(w\chi), w_\varepsilon\rangle_{H^{-1}\times H^1}dt\\
  =&-\int_Q[(u_\varepsilon\cdot\nabla)v_\varepsilon\cdot v+\varepsilon^2 u_\varepsilon\cdot\nabla w_\varepsilon]\chi dxdydzdt+\|v_0\|_2^2+\varepsilon^2\|w_0\|_2^2.
\end{align*}
Let's rewrite the term $\int_0^\infty\langle\partial_t(w\chi), w_\varepsilon\rangle_{H^{-1}\times H^1}dt$ as
\begin{align*}
  \int_0^\infty\langle\partial_t(w\chi), w_\varepsilon\rangle_{H^{-1}\times H^1}dt
  =\int_0^\infty\langle\partial_tw,w_\varepsilon\rangle_{H^{-1}\times H^1}\chi dt+\int_Qw  w_\varepsilon\chi'dxdydzdt,
\end{align*}
which, substituted in the previous identity, gives
\begin{align}
  \int_Q&(-v_\varepsilon\cdot\partial_tv+\nabla v_\varepsilon:\nabla v +\varepsilon^2\nabla w_\varepsilon\cdot\nabla w)\chi dxdydzdt\nonumber\\
  &-\varepsilon^2\int_0^\infty\langle\partial_tw,w_\varepsilon \rangle_{H^{-1}\times H^1}\chi dt-\int_Q(v_\varepsilon\cdot v+\varepsilon^2w_\varepsilon w)\chi'dxdydzdt\nonumber\\
  =&-\int_Q[(u_\varepsilon\cdot\nabla)v_\varepsilon\cdot v+\varepsilon^2 u_\varepsilon\cdot\nabla w_\varepsilon w]\chi dxdydzdt+\|v_0\|_2^2+\varepsilon^2\|w_0\|_2^2,\label{ne1}
\end{align}
for any $\chi\in C_0^\infty([0,\infty))$, with $0\leq\chi\leq1$ and $\chi(0)=1$.

Given $t_0\in(0,\infty)$, and take a sufficient small positive number $\delta\in(0,t_0)$. Choose $\chi_\delta\in C_0^\infty([0,t_0)$, such that $\chi_\delta\equiv1$ on $[0,t_0-\delta]$, $0\leq\chi_\delta\leq1$ on $[t_0-\delta, t_0)$, and $|\chi_\delta'|\leq\frac2\delta$ on $[0,t_0)$. We claim that, as $\delta\rightarrow0$, we have
\begin{eqnarray}
&&\int_Q(v_\varepsilon\cdot v+\varepsilon^2w_\varepsilon w)\chi_\delta'dxdydzdt\rightarrow-\left(\int_\Omega (v_\varepsilon\cdot v+\varepsilon^2w_\varepsilon w)dxdydz\right)(t_0),\label{ne2}\\
&&\int_0^\infty\langle\partial_tw,w_\varepsilon \rangle_{H^{-1}\times H^1}\chi_\delta dt\rightarrow\int_0^{t_0}\langle\partial_tw,w_\varepsilon \rangle_{H^{-1}\times H^1}dt.\label{ne3}
\end{eqnarray}
The validity of (\ref{ne3}) follows from the dominant convergence theorem for the integrals, thanks to the observation
$$
\langle\partial_tw,w_\varepsilon\rangle=-\left\langle\nabla_H\cdot \left(\int_0^z\partial_tvdz'\right),w_\varepsilon\right\rangle =\int_\Omega \left(\int_0^z\partial_tvdz'\right)\cdot\nabla_Hw_\varepsilon dxdydz,
$$
which implies $\langle\partial_tw,w_\varepsilon\rangle\in L^1((0,t_0))$, here, for simplicity, we have dropped the subscript $H^{-1}\times H^1$.
While for (\ref{ne2}), by defining
$$
f(t):=\left(\int_\Omega(v_\varepsilon\cdot v+\varepsilon^2w_\varepsilon w)dxdydz\right)(t)
$$
it is equivalent to show $\int_{t_0-\delta}^{t_0}f(t)\chi_\delta'(t)dt\rightarrow -f(t_0)$.
Recalling the regularities that $u_\varepsilon\in C_w([0,\infty); L^2(\Omega))$ and
$v\in C([0,\infty); H^1(\Omega))$, hence one has $w\in C([0,\infty); L^2(\Omega))$, and thus $f$
is a continuous function on $[0,\infty)$. For any $\sigma>0$, by the continuity of $f$, there is a positive number $\rho$, such that $|f(t)-f(t_0)|\leq\sigma$, for any $t\in[t_0-\rho,t_0]$. Now, for any $\delta\in(0,\rho)$, recalling that $\chi_\delta\equiv1$ on $[0,t_0-\delta]$, $\chi_\delta(t_0)=0$, and $|\chi_\delta'|\leq\frac2\delta$ on $[0,\infty)$, we deduce
\begin{align*}
\left|\int_{t_0-\delta}^{t_0}f(t)\chi_\delta'(t)dt+f(t_0)\right| =&\left|\int_{t_0-\delta}^{t_0}(f(t)-f(t_0))\chi_\delta'(t)dt\right|\\
\leq&\int_{t_0-\delta}^{t_0}|f(t)-f(t_0)||\chi_\delta'(t)|dt
\leq2\sigma,
\end{align*}
which proves (\ref{ne2}).

Recalling $w=-\int_0^z\nabla_H\cdot vdz'$, and noticing that $w\in L^2_{\text{loc}}([0,\infty); H^1(\Omega))$ and $\partial_tw\in L^2_{\text{loc}}([0,\infty); H^{-1}(\Omega))$, we deduce
\begin{align*}
  \langle\partial_tw,w_\varepsilon\rangle =&\langle\partial_tw,w_\varepsilon-w\rangle +\langle\partial_tw,w\rangle\\
  =&\left\langle-\nabla_H\cdot\left(\int_0^z \partial_tvdz'\right),w_\varepsilon-w\right\rangle +\langle\partial_tw,w\rangle\\
  =&\int_\Omega\left(\int_0^z \partial_tvdz'\right)\cdot\nabla_HW_\varepsilon dxdydz +\frac12\frac{d}{dt}\|w\|_2^2,
\end{align*}
where the Lions--Magenes Lemma (see, e.g., pages 260--261 of \cite{TEMAMNS}) has been used, and thus
$$
\int_0^{t_0}\langle\partial_tw,w_\varepsilon\rangle dt
=\int_{Q_{t_0}} \left(\int_0^z \partial_tvdz'\right) \cdot\nabla_HW_\varepsilon  dxdydzdt
+\frac{1}{2}(\|w(t_0)\|_2^2-\|w_0\|_2^2),
$$
where $Q_{t_0}=\Omega\times(0,t_0)$.
Thanks to the above equality and (\ref{ne2})--(\ref{ne3}), one can choose $\chi=\chi_\delta$ in (\ref{ne1}), as in the previous paragraph, and let $\delta$ goes to zero to get
\begin{align*}
  &-\frac{\varepsilon^2}{2}\|w(t_0)\|_2^2+\left(\int_\Omega v_\varepsilon\cdot v+\varepsilon^2w_\varepsilon w) dxdydz\right)(t_0)\nonumber\\
  &+\int_{Q_{t_0}}(-v_\varepsilon\cdot\partial_tv+\nabla v_\varepsilon:\nabla v +\varepsilon^2\nabla w_\varepsilon\cdot\nabla w) dxdydzdt\nonumber\\
  =&\frac{\varepsilon^2}{2}\|w_0\|_2^2+\|v_0\|_2^2+
  \varepsilon^2\int_{Q_{t_0}} \left(\int_0^z \partial_tvdz'\right) \cdot\nabla_HW_\varepsilon dxdydzdt\nonumber\\
  &-\int_{Q_{t_0}} [(u_\varepsilon\cdot\nabla)v_\varepsilon\cdot v+\varepsilon^2 u_\varepsilon\cdot\nabla w_\varepsilon w] dxdydzdt,
\end{align*}
for any $t_0\in[0,\infty)$. This completes the proof.
\end{proof}

Now, we can estimate the difference between $(v_\varepsilon, w_\varepsilon)$ and $(v,w)$.

\begin{proposition}
  \label{prop}
Under the same assumptions as in Proposition \ref{prop0} and denoting $(V_\varepsilon, W_\varepsilon):=(v_\varepsilon-v, w_\varepsilon-w)$, the following holds
  \begin{align*}
  \sup_{0\leq t<\infty}(\|V_\varepsilon\|_2^2&+\varepsilon^2\|W_\varepsilon\|_2^2)(t) +\int_0^{\infty} (\|\nabla V_\varepsilon\|_2^2+\varepsilon^2\|\nabla W_\varepsilon \|_2^2)ds\nonumber\\
  \leq& C(\|v_0\|_{H^1}, L_1,L_2)\varepsilon^2(\|v_0\|_2^2+\varepsilon^2\|w_0\|_2^2+1)^2,
  \end{align*}
  where $C(\|v_0\|_{H^1}, L_1,L_2)$ denotes a constant depending only on $\|v_0\|_{H^1}$, $L_1$ and $L_2$.
\end{proposition}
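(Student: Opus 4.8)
The plan is to run a weak--strong type energy argument. Since $(v_\varepsilon,w_\varepsilon)$ is merely a Leray--Hopf weak solution of (SNS), one cannot subtract (SNS) and (PEs) and test the difference system directly against $U_\varepsilon=(V_\varepsilon,W_\varepsilon)$; instead I would reconstruct, \emph{as an inequality}, the energy balance that $U_\varepsilon$ would formally satisfy, by combining four relations: (a) the energy inequality for $(v_\varepsilon,w_\varepsilon)$ in Definition \ref{def}(ii); (b) the basic energy identity $\|v(t)\|_2^2+2\int_0^t\|\nabla v\|_2^2\,ds=\|v_0\|_2^2$ of Proposition \ref{pebasic}; (c) the integral identity of Proposition \ref{prop0}, which is precisely ``(SNS) tested against $(v,w)$''; and (d) the identity obtained by taking the $L^2(\Omega)$ inner product of the horizontal momentum equation of (PEs) with $v_\varepsilon$ and integrating over $Q_t$, namely $\int_{Q_t}\partial_tv\cdot v_\varepsilon+\int_{Q_t}(u\cdot\nabla)v\cdot v_\varepsilon+\int_{Q_t}\nabla v:\nabla v_\varepsilon=0$ (the pressure integral vanishing because $\nabla\cdot u_\varepsilon=0$).

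Starting from the algebraic identity $\|V_\varepsilon\|_2^2+\varepsilon^2\|W_\varepsilon\|_2^2=(\|v_\varepsilon\|_2^2+\varepsilon^2\|w_\varepsilon\|_2^2)-2\int_\Omega(v_\varepsilon\cdot v+\varepsilon^2w_\varepsilon w)\,dxdydz+(\|v\|_2^2+\varepsilon^2\|w\|_2^2)$, I would substitute (a)--(d) and collect terms. The contributions $\pm\varepsilon^2\|w_0\|_2^2$, $\pm\varepsilon^2\|w(t)\|_2^2$ and the initial-energy terms cancel; the viscous terms combine via completing squares, $-\|\nabla v_\varepsilon\|_2^2+2\,\nabla v_\varepsilon:\nabla v-\|\nabla v\|_2^2=-\|\nabla V_\varepsilon\|_2^2$ and analogously (up to a favourable sign) for $w$, leaving $-2\int_0^t\|\nabla V_\varepsilon\|_2^2\,ds-\varepsilon^2\int_0^t\|\nabla W_\varepsilon\|_2^2\,ds$; and the convective terms, using $\nabla\cdot U_\varepsilon=0$ and $\int_\Omega(U_\varepsilon\cdot\nabla)v\cdot v=0$, reassemble into $-2\int_{Q_t}(U_\varepsilon\cdot\nabla)v\cdot V_\varepsilon-2\varepsilon^2\int_{Q_t}(U_\varepsilon\cdot\nabla w)W_\varepsilon$. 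The net result is an inequality of the shape
\[
\|(V_\varepsilon,\varepsilon W_\varepsilon)(t)\|_2^2+2\int_0^t\|\nabla V_\varepsilon\|_2^2\,ds+\varepsilon^2\int_0^t\|\nabla W_\varepsilon\|_2^2\,ds\le \mathcal N(t)+\mathcal F(t),
\]
where $\mathcal N(t)=-2\int_{Q_t}(U_\varepsilon\cdot\nabla)v\cdot V_\varepsilon-2\varepsilon^2\int_{Q_t}(U_\varepsilon\cdot\nabla w)W_\varepsilon$ is quadratic in the difference, and $\mathcal F(t)=\varepsilon^2\int_0^t\|\nabla w\|_2^2\,ds-2\varepsilon^2\int_{Q_t}\big(\int_0^z\partial_tv\,dz'\big)\cdot\nabla_HW_\varepsilon-2\varepsilon^2\int_{Q_t}(u\cdot\nabla w)W_\varepsilon$ is a ``forcing'' carrying an explicit $\varepsilon^2$ and involving only $v$ (with $w$ slaved to $v$). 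This term-by-term bookkeeping, performed entirely within the regularity class of a weak solution and with the right pairing of (a)--(d) so that exactly these cancellations occur, is the main obstacle.

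It then remains to estimate $\mathcal N$ and $\mathcal F$. For $\mathcal N$, I write $U_\varepsilon\cdot\nabla v=(V_\varepsilon\cdot\nabla_H)v+W_\varepsilon\partial_zv$ and $U_\varepsilon\cdot\nabla w=(V_\varepsilon\cdot\nabla_H)w+W_\varepsilon\partial_zw$, use $W_\varepsilon=-\int_0^z\nabla_H\cdot V_\varepsilon\,dz'$ and $\partial_zw=-\nabla_H\cdot v$, and invoke Lemma \ref{LADTYPE} (the \emph{second} inequality there, so that one never needs a second-order derivative of $V_\varepsilon$ or $W_\varepsilon$), together with the three-dimensional Gagliardo--Nirenberg inequality, the Poincar\'e inequality (the velocities having zero average), and Young's inequality against the dissipation; this yields $|\mathcal N(t)|\le\tfrac12\int_0^t(\|\nabla V_\varepsilon\|_2^2+\varepsilon^2\|\nabla W_\varepsilon\|_2^2)\,ds+\sigma\int_0^t\varepsilon^2\|W_\varepsilon\|_2^2\,ds+\int_0^tg(s)\|(V_\varepsilon,\varepsilon W_\varepsilon)(s)\|_2^2\,ds$, with $g(s)=C(\|\nabla v\|_2^4+\|\partial_zv\|_2^2\|\nabla\partial_zv\|_2^2+\dots)$, $\sigma$ as small as desired, and $\int_0^\infty g\,ds\le C(\|v_0\|_{H^1},L_1,L_2)$ by Corollary \ref{cor}(i). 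For $\mathcal F$: the first term is $\le C\varepsilon^2\int_0^\infty\|\Delta v\|_2^2\,ds\le C(\|v_0\|_{H^1})\varepsilon^2$ since $\|\nabla w\|_2\le C\|\Delta v\|_2$; the second is $\le\delta\varepsilon^2\int_0^t\|\nabla W_\varepsilon\|_2^2\,ds+C\varepsilon^2\int_0^\infty\|\partial_tv\|_2^2\,ds\le\delta\varepsilon^2\int_0^t\|\nabla W_\varepsilon\|_2^2\,ds+C(\|v_0\|_{H^1})\varepsilon^2$; and the third, after Hölder/Gagliardo--Nirenberg and Young against $\varepsilon^2\|\nabla W_\varepsilon\|_2^2$ (plus Poincar\'e), is $\le\delta\varepsilon^2\int_0^t\|\nabla W_\varepsilon\|_2^2\,ds+\sigma\int_0^t\varepsilon^2\|W_\varepsilon\|_2^2\,ds+C\varepsilon^2\mathcal P$, where $\mathcal P$ is a polynomial in $\|v_0\|_2^2+\varepsilon^2\|w_0\|_2^2$ coming from the energy bound on $(v_\varepsilon,w_\varepsilon)$ and the a priori estimates of Section \ref{sec3}; tracking the constants gives the factor $(\|v_0\|_2^2+\varepsilon^2\|w_0\|_2^2+1)^2$.

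Finally, I choose $\delta,\sigma$ small, absorb the dissipation-type and the $\sigma\int_0^t\varepsilon^2\|W_\varepsilon\|_2^2$ terms into the left-hand side (using $\varepsilon^2\|\nabla W_\varepsilon\|_2^2\ge\lambda_1\varepsilon^2\|W_\varepsilon\|_2^2$), and arrive at
\[
y(t)+\int_0^t\!\big(\|\nabla V_\varepsilon\|_2^2+\varepsilon^2\|\nabla W_\varepsilon\|_2^2\big)\,ds\le\int_0^tg(s)\,y(s)\,ds+C\varepsilon^2\mathcal P,\qquad y:=\|(V_\varepsilon,\varepsilon W_\varepsilon)\|_2^2,\quad y(0)=0.
\]
The integral form of the Gronwall inequality, together with $\int_0^\infty g\,ds\le C(\|v_0\|_{H^1},L_1,L_2)$, gives $\sup_{t\ge0}y(t)\le C\varepsilon^2\mathcal P$, and feeding this back into the displayed inequality bounds $\int_0^\infty(\|\nabla V_\varepsilon\|_2^2+\varepsilon^2\|\nabla W_\varepsilon\|_2^2)$ in the same way, which is the asserted estimate. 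Besides the cancellation bookkeeping of the second paragraph, the only delicate points are (1) choosing the version of the anisotropic Ladyzhenskaya inequality that never calls for more than one spatial derivative of $V_\varepsilon$ or $W_\varepsilon$, and (2) making sure all the time integrals appearing in $g$ and $\mathcal P$ are finite on $(0,\infty)$ --- which is exactly what the global, uniform-in-time a priori estimates of Section \ref{sec3} ($\|v\|_2$ decaying exponentially, and $\|\nabla v\|_2,\|\nabla\partial_zv\|_2,\|\Delta v\|_2,\|\partial_tv\|_2\in L^2(0,\infty)$) furnish.
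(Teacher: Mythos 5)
Your proposal is correct and follows essentially the same route as the paper: you combine the same four relations (the (SNS) energy inequality, the (PEs) energy identity, Proposition \ref{prop0}, and (PEs) tested against $v_\varepsilon$) in the Serrin weak--strong fashion, control the trilinear remainders with Lemma \ref{LADTYPE}, the Poincar\'e inequality and the global bounds of Corollary \ref{cor}, and close with Gronwall. The only (harmless) deviation is your treatment of the term $\varepsilon^2\int u_\varepsilon\cdot\nabla w_\varepsilon\,w$, which you split into a quadratic part plus an $O(\varepsilon^2)$ forcing, whereas the paper estimates it directly against the Leray--Hopf energy bound on $u_\varepsilon$ --- which is exactly where its factor $(\|v_0\|_2^2+\varepsilon^2\|w_0\|_2^2+1)^2$ originates; your version yields a bound at least as strong.
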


\begin{proof}
Multiplying equation (\ref{PE}) by $v_\varepsilon$, integrating the resultant over $Q_{t_0}$, it follows from integration by parts that
\begin{equation}
  \label{ne5}
  \int_{Q_{t_0}}(\partial_tv\cdot v_\varepsilon+\nabla v:\nabla v_\varepsilon)dxdydzdt=-\int_{Q_{t_0}}(u\cdot\nabla)v\cdot v_\varepsilon dxdydzdt,
\end{equation}
for any $t_0\in[0,\infty)$. Multiplying equation (\ref{PE}) by $v$, integrating the resultant over $Q_{t_0}$, the it follows from integration by parts that
\begin{equation}
  \label{ne6}
  \frac12\|v(t_0)\|_2^2+\int_0^{t_0}\|\nabla v\|_2^2dt=\frac12\|v_0\|_2^2,
\end{equation}
for any $t_0\in[0,\infty)$.
By the definition of Leray-Hopf weak solutions, one has
\begin{align}
  \frac12(\|v_\varepsilon(t_0)\|_2^2 +\varepsilon^2\|w_\varepsilon(t_0)\|_2^2)&+\int_0^{t_0} (\|\nabla v_\varepsilon\|_2^2 +\varepsilon^2\|\nabla w_\varepsilon\|_2^2)ds\nonumber\\
  \leq&\frac12(\|v_0\|_2^2+\varepsilon^2\|w_0\|_2^2),\label{ne7}
\end{align}
for a.e. $t_0\in[0,\infty)$.

Summing (\ref{ne6}) and (\ref{ne7}), then subtracting from the resultant (\ref{ne4}) (choose $t=t_0$ there) and (\ref{ne5}) yields
\begin{align}
&\frac12(\|V_\varepsilon\|_2^2+\varepsilon^2\|W_\varepsilon\|_2^2)(t_0)+\int_0^{t_0} (\|\nabla V_\varepsilon\|_2^2+\varepsilon^2\|\nabla W_\varepsilon \|_2^2)dt\nonumber\\
  \leq&-\varepsilon^2\int_{Q_{t_0}} \left[\left(\int_0^z \partial_tvdz'\right) \cdot\nabla_HW_\varepsilon +\nabla w\cdot\nabla W_\varepsilon \right]dxdydzdt\nonumber\\
  &+\int_{Q_{t_0}} [(u_\varepsilon\cdot\nabla)v_\varepsilon\cdot v+(u\cdot\nabla)v\cdot v_\varepsilon]dxdydzdt\nonumber\\
  &+\varepsilon^2\int_{Q_{t_0}} u_\varepsilon\cdot\nabla w_\varepsilon w ~dxdydzdt=:I_1+I_2+I_3,\label{ne8}
\end{align}
for a.e. $t_0\in[0,\infty)$.
It follows from the H\"older and Cauchy-Schwarz inequalities, and using Corollary \ref{cor} that
\begin{align}
I_1\leq& \varepsilon^2(\|\partial_tv\|_{L^2(Q_{t_0})}+\|\nabla w\|_{L^2(Q_{t_0})})\|\nabla W_\varepsilon\|_{L^2(Q_{t_0})}\nonumber\\
\leq&\frac{\varepsilon^2 }{6}\|\nabla W_\varepsilon\|_{L^2(Q_{t_0})}^2 +C(\|v_0\|_{H^1},L_1,L_2)\varepsilon^2.\label{I1}
\end{align}

We are going to estimate the quantities $I_2$ and $I_3$ on the right-hand side of (\ref{ne8}).
Using the incompressibility conditions, it follows from integration by parts that
\begin{align*}
  I_2:=&\int_{Q_{t_0}}[(u_\varepsilon\cdot\nabla)v_\varepsilon\cdot v+(u\cdot\nabla)v\cdot v_\varepsilon ]dxdydzdt\\
  =&\int_{Q_{t_0}}[(u_\varepsilon\cdot\nabla)v_\varepsilon\cdot v-(u\cdot\nabla)v_\varepsilon \cdot v]dxdydzdt\\
  =&\int_{Q_{t_0}}[(u_\varepsilon-u)\cdot\nabla]v_\varepsilon\cdot vdxdydzdt\\
  =&\int_{Q_{t_0}}[(u_\varepsilon-u)\cdot\nabla]V_\varepsilon\cdot vdxdydzdt.
\end{align*}
The quantity $I_2$ will be divided into two parts $I_2'$ and $I_2''$, below. It follows from the H\"older, Sobolev and Young inequalities that
\begin{align*}
  I_{2}':=&\int_{Q_{t_0}}(V_\varepsilon\cdot\nabla_H)V_\varepsilon\cdot vdxdydzdt\\
  \leq&\int_0^{t_0}\|V_\varepsilon\|_3\|\nabla V_\varepsilon\|_2 \|v\|_6dt
  \leq C\int_0^{t_0}\|V_\varepsilon\|_2^{\frac12} \|\nabla V_\varepsilon\|_2^{\frac32}\|\nabla v\|_2dt\\
  \leq&\frac{1}{18}\|\nabla V_\varepsilon\|_{L^2(Q_{t_0})}^2 +C\int_0^{t_0}\|\nabla v\|_2^4\|V_\varepsilon\|_2^2dt.
\end{align*}
Integration by parts yields
\begin{align*}
  I_2'':=&\int_{Q_{t_0}}W_\varepsilon\partial_zV_\varepsilon\cdot vdxdydzdt\\
  =&-\int_{Q_{t_0}}[\partial_zW_\varepsilon V_\varepsilon\cdot v +W_\varepsilon V_\varepsilon\cdot\partial_zv]dxdydzdt\\
  =&\int_{Q_{t_0}}[\nabla_H\cdot V_\varepsilon V_\varepsilon\cdot v-W_\varepsilon V_\varepsilon\cdot\partial_zv]dxdydzdt\\
\end{align*}
For the first term of $I_{2}''$, denoted by $I_{21}''$, the same arguments as for $I_2'$ yield
\begin{align*}
  I_{21}'':=&\int_{Q_{t_0}}(\nabla_H\cdot V_\varepsilon) (V_\varepsilon\cdot v)dxdydzdt\\
  \leq&\frac{1}{18}\|\nabla V_\varepsilon\|_{L^2(Q_{t_0})}^2 +C\int_0^{t_0}\|\nabla v\|_2^4\|V_\varepsilon\|_2^2dt.
\end{align*}
For the second term of $I_{2}''$, denoted by $I_{22}''$, by Lemma \ref{LADTYPE}, it follows from the Poincar\'e and Young inequalities that
\begin{align*}
  I_{22}'':=&-\int_{Q_{t_0}}W_\varepsilon V_\varepsilon\cdot\partial_zv dxdydzdt\\
  =&\int_0^t\int_\Omega\left(\int_0^z\nabla_H\cdot V_\varepsilon dz'\right)( V_\varepsilon\cdot\partial_zv) dxdydzdt\\
  \leq&\int_0^{t_0}\int_M\left(\int_{-1}^1|\nabla_H V_\varepsilon|dz\right)\left(\int_{-1}^1| V_\varepsilon||\partial_zv|dz\right) dxdydt\\
  \leq&C\int_0^{t_0}\|\nabla V_\varepsilon \|_2^{\frac32}\|V_\varepsilon\|_2^{\frac12}\|\nabla v\|_2^{\frac12}
  \|\Delta v\|_2^{\frac12}dt\\
  \leq&\frac{1}{18}\|\nabla V_\varepsilon\|_{L^2(Q_{t_0})}^2 +C\int_0^{t_0}\|\nabla v\|_2^2\|\Delta v\|_2^2\|V_\varepsilon\|_2^2dt.
\end{align*}
Thanks to the estimates for $I_2'$, $I_{21}''$ and $I_{22}''$, we can bound $I_2$ as
\begin{equation}
  I_2 \leq \frac{1}{6}\|\nabla V_\varepsilon \|_{L^2(Q_{t_0})}^2 +C\int_0^{t_0}\|\nabla v\|_2^2\|\Delta v\|_2^2\|V_\varepsilon\|_2^2dt,\label{I2}
\end{equation}
note that the Poincar\'e inequality has been used.

We still need to estimate the last term $I_3$ in (\ref{ne8}). Using the incompressibility conditions, we deduce that
\begin{align*}
  I_3:=&\varepsilon^2\int_{Q_{t_0}} u_\varepsilon\cdot\nabla w_\varepsilon w dxdydzdt
  =\varepsilon^2\int_{Q_{t_0}} u_\varepsilon\cdot\nabla W_\varepsilon w dxdydzdt\\
  =&\varepsilon^2\int_{Q_{t_0}} [v_\varepsilon\cdot\nabla_H W_\varepsilon-w_\varepsilon\nabla_H\cdot V_\varepsilon]w dxdydzdt\\
  \leq&\varepsilon^2\int_0^{t_0}\int_M\left(\int_{-1}^1 (|v_\varepsilon||\nabla_H W_\varepsilon~|+|w_\varepsilon||\nabla_H V_\varepsilon|)dz \right)\left(\int_{-1}^1|\nabla_Hv|dz\right)dxdydt.
\end{align*}
Thus, it follows from Lemma \ref{LADTYPE}, the Poincar\'e and Young inequalities that
\begin{align*}
  I_3\leq&C\varepsilon^2\int_0^{t_0}(\|v_\varepsilon\|_2^{\frac12}\|\nabla v_\varepsilon\|_2^{\frac12}\|\nabla W_\varepsilon\|_2\\
  &+ \|w_\varepsilon\|_2^{\frac12}\|\nabla w_\varepsilon\|_2^{\frac12}
  \|\nabla_HV_\varepsilon\|_2)\|\nabla v\|_2^{\frac12}\|\Delta v\|_2^{\frac12}dt\\
  \leq&C\varepsilon^2\int_0^{t_0} (\|v_\varepsilon\|_2^2\|\nabla v_\varepsilon\|_2^2+\|\nabla v\|_2^2\|\Delta v\|_2^2 +\varepsilon^2 \|w_\varepsilon\|_2^2\|\nabla w_\varepsilon\|_2^2)dt\\
  &+\frac16\left(\|\nabla V_\varepsilon\|_{L^2(Q_{t_0})}^2 +\varepsilon^2\|\nabla W_ \varepsilon\|_{L^2(Q_{t_0})}^2 \right),
\end{align*}
from which, recalling (\ref{ne7}) and by Corollary \ref{cor}, we have
\begin{align}
  I_3\leq& \frac16\left(\|\nabla V_\varepsilon\|_{L^2(Q_{t_0})}^2 +\varepsilon^2\|\nabla W_\varepsilon\|_{L^2(Q_{t_0})}^2 \right) \nonumber\\
  &+C\varepsilon^2[(\|v_0\|_2^2+\varepsilon^2\|w_0\|_2^2)^2+C(\|v_0\|_{H^1}, L_1,L_2)]\label{I3}
\end{align}

Substituting (\ref{I1})--(\ref{I3}) into (\ref{ne8}) yields
\begin{align*}
f(t):=(\|V_\varepsilon\|_2^2&+\varepsilon^2\|W_\varepsilon\|_2^2)(t)+\int_0^{t} (\|\nabla V_\varepsilon\|_2^2+\varepsilon^2\|\nabla W_\varepsilon \|_2^2)ds\nonumber\\
  \leq&C\varepsilon^2[(\|v_0\|_2^2+\varepsilon^2\|w_0\|_2^2)^2 +C(\|v_0\|_{H^1}, L_1,L_2)]\nonumber\\
  &+C\int_0^{t}\|\nabla v\|_2^2\|\Delta v\|_2^2\|V_\varepsilon\|_2^2ds=:F(t),
\end{align*}
for a.e. $t\in[0,\infty)$. Thus, we have
\begin{align*}
F'(t)=&C\|\nabla v\|_2^2\|\Delta v\|_2^2\|V_\varepsilon\|_2^2\\
\leq& C\|\nabla v\|_2^2\|\Delta v\|_2^2f(t)\leq C\|\nabla v\|_2^2\|\Delta v\|_2^2F(t),
\end{align*}
from which, by the Gronwall inequality, and using Corollary \ref{cor}, we deduce
\begin{align*}
  f(t)\leq&F(t)\leq e^{C\int_0^t\|\nabla v\|_2^2\|\Delta v\|_2^2ds}F(0)\\
  \leq&\varepsilon^2 C(\|v_0\|_{H^1}, L_1,L_2)(\|v_0\|_2^2+\varepsilon^2\|w_0\|_2^2+1)^2,
\end{align*}
which implies the conclusion.
\end{proof}

With the aid of Proposition \ref{prop}, we can now give the proof of Theorem \ref{thm0}.

\begin{proof}[\textbf{Proof of Theorem \ref{thm0}}]
The estimate in Theorem \ref{thm0} follows from Proposition \ref{prop}, while the convergence is a direct consequence of that estimate.
\end{proof}

\section{Strong convergence II: the $H^2$ initial data case}
\label{sec5}
In this section, we prove the strong convergence of (SNS) to (PEs), with initial data $v_0\in H^2(\Omega)$, as the aspect ration parameter $\varepsilon$ goes to zero. In other words, we give the proof of Theorem \ref{thm}.

Let $v_0\in H^2(\Omega)$, and suppose that
$$
\nabla_H\cdot\left(\int_{-1}^1 v_0(x,y,z)dz\right)=0,\quad\mbox{for all }(x,y)\in M.
$$
Set $u_0=(v_0,w_0)$, with $w_0$ given by (\ref{ne00}), then $u_0\in H^1(\Omega)$ and $\nabla\cdot u_0=0$.
By the same arguments as those for the standard Navier-Stokes equations, see, e.g., Constantin--Foias \cite{CONFO} and Temam \cite{TEMAMNS}, one can prove that, there is a unique local (in time) strong solution $u_\varepsilon=(v_\varepsilon, w_\varepsilon)$ to (SNS), subject to (\ref{bc})--(\ref{sc}). Denote by $T_\varepsilon^*$ the maximal existence time of the strong solution $(v_\varepsilon, w_\varepsilon)$.
Let $u=(v,w)$ be the unique solutions to (PEs), subject to (\ref{bc})--(\ref{sc}).

Denote, as before, $U_\varepsilon$ the difference between
$u_\varepsilon$ and $u$, that is
$$
U_\varepsilon=(V_\varepsilon,W_\varepsilon),\quad V_\varepsilon=v_\varepsilon-v,\quad W_\varepsilon=w_\varepsilon-w.
$$
Then, one can easily verify that $U_\varepsilon=(V_\varepsilon,W_\varepsilon)$ satisfies the following system
\begin{eqnarray}
  &\partial_tV_\varepsilon+(U_\varepsilon\cdot\nabla)V_\varepsilon-\Delta V_\varepsilon+\nabla_HP_\varepsilon+( u\cdot\nabla)V_\varepsilon+(U_\varepsilon\cdot\nabla) v=0,\label{DIFF-1}\\
  &\nabla_H\cdot V_\varepsilon+\partial_zW_\varepsilon=0,\label{DIFF-2}\\
  &\varepsilon^2(\partial_tW_\varepsilon+U_\varepsilon\cdot\nabla W_\varepsilon-\Delta W_\varepsilon+U_\varepsilon\cdot\nabla w+  u\cdot\nabla W_\varepsilon)+\partial_zP_\varepsilon\nonumber\\
  &=-\varepsilon^2(\partial_t w+  u\cdot\nabla  w-\Delta  w),\label{DIFF-3}
\end{eqnarray}
in $\Omega\times(0,T_\varepsilon^*)$. Due to the smoothing effect of (SNS) to the unique strong solutions,
one can show that the strong solution $(v_\varepsilon, w_\varepsilon)$ is smooth in the time interval $(0, T_\varepsilon^*)$, and thus, recalling that $(v,w)$ is smooth away from the initial time, so is $(V_\varepsilon, W_\varepsilon)$. This guarantees the validity of the arguments in the proof below.

We are going to do the a priori estimates on $(V_\varepsilon, W_\varepsilon)$. We start with the basic energy estimate stated in the following proposition.

\begin{proposition}[Basic $L^2$ energy estimate]
  \label{basicdiff}
  The following basic energy estimate holds
  \begin{align*}
  \sup_{0\leq s\leq t}(\|V_\varepsilon\|_2^2&+\varepsilon^2\|W_\varepsilon\|_2^2) +\int_0^t (\|\nabla V_\varepsilon\|_2^2+\varepsilon^2\|\nabla W_\varepsilon \|_2^2)ds\nonumber\\
  \leq& C\varepsilon^2(\|v_0\|_2^2+\varepsilon^2\|w_0\|_2^2+1)^2,
  \end{align*}
for any $t\in[0,T_\varepsilon^*)$, where $C$ is a constant depending only on $\|v_0\|_{H^1}$, $L_1$ and $L_2$.
\end{proposition}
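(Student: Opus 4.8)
The plan is to perform the standard $L^2$ energy estimate directly on the difference system \eqref{DIFF-1}--\eqref{DIFF-3}, exploiting the fact that $(V_\varepsilon,W_\varepsilon)$ vanishes at $t=0$ and that the forcing on the right-hand side of \eqref{DIFF-3} carries a prefactor $\varepsilon^2$. First I would take the $L^2(\Omega)$ inner product of \eqref{DIFF-1} with $V_\varepsilon$ and of \eqref{DIFF-3} with $W_\varepsilon$, add the two, and integrate by parts. The pressure terms drop out by the incompressibility condition \eqref{DIFF-2} and periodicity; the quadratic terms $(U_\varepsilon\cdot\nabla)V_\varepsilon\cdot V_\varepsilon$ and $\varepsilon^2 U_\varepsilon\cdot\nabla W_\varepsilon\, W_\varepsilon$ vanish since $\nabla\cdot U_\varepsilon=0$; and the term $u\cdot\nabla V_\varepsilon\cdot V_\varepsilon$ (and its $\varepsilon^2$-weighted $W_\varepsilon$ analogue) also vanish since $\nabla\cdot u=0$. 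This leaves
\begin{align*}
\frac12\frac{d}{dt}\big(\|V_\varepsilon\|_2^2+\varepsilon^2\|W_\varepsilon\|_2^2\big)+\|\nabla V_\varepsilon\|_2^2+\varepsilon^2\|\nabla W_\varepsilon\|_2^2
=&-\int_\Omega (U_\varepsilon\cdot\nabla)v\cdot V_\varepsilon\,dxdydz\\
&-\varepsilon^2\int_\Omega (U_\varepsilon\cdot\nabla w)\,W_\varepsilon\,dxdydz\\
&-\varepsilon^2\int_\Omega(\partial_t w+u\cdot\nabla w-\Delta w)W_\varepsilon\,dxdydz.
\end{align*}

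Next I would estimate the three right-hand terms. Writing $U_\varepsilon=(V_\varepsilon,W_\varepsilon)$ and splitting $(U_\varepsilon\cdot\nabla)v\cdot V_\varepsilon = (V_\varepsilon\cdot\nabla_H)v\cdot V_\varepsilon + W_\varepsilon\partial_z v\cdot V_\varepsilon$, the horizontal piece is controlled by the H\"older, Sobolev and Young inequalities as in the estimate of $I_2'$ in Proposition \ref{prop}, giving a contribution bounded by $\frac18\|\nabla V_\varepsilon\|_2^2 + C\|\nabla v\|_2^4\|V_\varepsilon\|_2^2$. For the piece involving $W_\varepsilon$ one must, as emphasized in the introduction, express $W_\varepsilon=-\int_0^z\nabla_H\cdot V_\varepsilon\,dz'$ and then apply the Ladyzhenskaya-type inequality of Lemma \ref{LADTYPE} exactly as for $I_{22}''$, obtaining a bound $\frac18\|\nabla V_\varepsilon\|_2^2 + C\|\nabla v\|_2^2\|\Delta v\|_2^2\|V_\varepsilon\|_2^2$. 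The term $\varepsilon^2\int (U_\varepsilon\cdot\nabla w)W_\varepsilon$ is handled similarly to $I_3$ in Proposition \ref{prop}, using Lemma \ref{LADTYPE}, the Poincar\'e inequality and Corollary \ref{cor}; it absorbs into $\frac18(\|\nabla V_\varepsilon\|_2^2+\varepsilon^2\|\nabla W_\varepsilon\|_2^2)$ plus lower-order terms that are either $O(\varepsilon^2)$ times quantities controlled by the energy inequality for (SNS) (namely $\|v_0\|_2^2+\varepsilon^2\|w_0\|_2^2$) or $O(\varepsilon^2)$ times quantities controlled by the primitive-equations a priori bounds of Corollary \ref{cor}. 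The forcing term is estimated by Cauchy--Schwarz and the Poincar\'e inequality: $\varepsilon^2\|\partial_t w+u\cdot\nabla w-\Delta w\|_2\|W_\varepsilon\|_2 \le \frac{\varepsilon^2}{8}\|\nabla W_\varepsilon\|_2^2 + C\varepsilon^2\|\partial_t w+u\cdot\nabla w-\Delta w\|_2^2$, and the time integral of the last factor is finite and controlled by $C(\|v_0\|_{H^1},L_1,L_2)$ thanks to Corollary \ref{cor} (here one uses $\|u\cdot\nabla w\|_2 \le \|u\|_\infty\|\nabla w\|_2$ or a Ladyzhenskaya-type bound, together with the $H^2$ and $\partial_t v$ bounds on $v$).

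Collecting these estimates and absorbing the dissipative terms, I would arrive at a differential inequality of the form
$$
\frac{d}{dt}\big(\|V_\varepsilon\|_2^2+\varepsilon^2\|W_\varepsilon\|_2^2\big)+\|\nabla V_\varepsilon\|_2^2+\varepsilon^2\|\nabla W_\varepsilon\|_2^2
\le C\big(\|\nabla v\|_2^2\|\Delta v\|_2^2+\|\nabla v\|_2^4\big)\|V_\varepsilon\|_2^2 + C\varepsilon^2 g(t),
$$
where $\int_0^\infty g\,dt \le C(\|v_0\|_{H^1}+(\|v_0\|_2^2+\varepsilon^2\|w_0\|_2^2)^2)$ by Corollary \ref{cor} and the (SNS) energy inequality. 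Since $\int_0^\infty(\|\nabla v\|_2^2\|\Delta v\|_2^2+\|\nabla v\|_2^4)\,ds<\infty$ by Corollary \ref{cor}, an application of the Gronwall inequality to the quantity $f(t)$ (defined analogously to the proof of Proposition \ref{prop}, including the time-integrated dissipation) together with $f(0)=0$ yields exactly the claimed bound $C\varepsilon^2(\|v_0\|_2^2+\varepsilon^2\|w_0\|_2^2+1)^2$, uniformly for $t\in[0,T_\varepsilon^*)$. The main technical obstacle is the same one flagged in the introduction: the term $W_\varepsilon\partial_z V_\varepsilon\cdot V_\varepsilon$ (equivalently $W_\varepsilon\partial_z v\cdot V_\varepsilon$ after using that the cubic-in-$V_\varepsilon$ term vanishes), for which no $\varepsilon$-independent dynamical information on $W_\varepsilon$ is available, so one is forced to use only the kinematic relation $W_\varepsilon=-\int_0^z\nabla_H\cdot V_\varepsilon\,dz'$ and the anisotropic Ladyzhenskaya inequality of Lemma \ref{LADTYPE} — this is precisely what keeps the estimate closed with the correct powers and the small parameter in the right places.
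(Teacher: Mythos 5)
Your proof is essentially correct, but it takes a genuinely different route from the paper's. The paper's own proof of Proposition \ref{basicdiff} is a one-line observation: since the strong solution $(v_\varepsilon,w_\varepsilon)$ is in particular a Leray--Hopf weak solution on $[0,T_\varepsilon^*)$, the $L^2$ estimate already established in Proposition \ref{prop} (via the weak-formulation/Serrin-type argument of Proposition \ref{prop0}) applies verbatim, and nothing new needs to be proved. You instead redo the energy estimate directly on the difference system \eqref{DIFF-1}--\eqref{DIFF-3}, which is legitimate precisely because in the $H^2$ setting both solutions are strong (this is the approach the paper explicitly could \emph{not} use in Section \ref{sec4}); your treatment of the convective terms via $W_\varepsilon=-\int_0^z\nabla_H\cdot V_\varepsilon\,dz'$ and Lemma \ref{LADTYPE} mirrors $I_2'$, $I_{22}''$ correctly, and the Gronwall closure is sound. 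Two caveats. First, as written your bound on the forcing term, $\varepsilon^2\|\partial_tw+u\cdot\nabla w-\Delta w\|_2\|W_\varepsilon\|_2$, requires $\|\partial_tw\|_2\le C\|\nabla_H\partial_tv\|_2$ and $\|\Delta w\|_2\le C\|\nabla\Delta v\|_2$, whose time integrals are controlled only by Corollary \ref{cor}(ii); this yields a constant depending on $\|v_0\|_{H^2}$ rather than the stated $\|v_0\|_{H^1}$. That weaker dependence is harmless for Theorem \ref{thm}, but to recover the stated dependence you must integrate by parts and move $\nabla_H$ onto $W_\varepsilon$, estimating $\varepsilon^2\int(\int_0^z\partial_tv\,dz')\cdot\nabla_HW_\varepsilon$ and $\varepsilon^2\int\nabla w\cdot\nabla W_\varepsilon$ using only $\int_0^\infty(\|\partial_tv\|_2^2+\|\Delta v\|_2^2)\,dt$ --- exactly the term $I_1$ in Proposition \ref{prop}. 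Second, your analogy of $\varepsilon^2\int(U_\varepsilon\cdot\nabla w)W_\varepsilon$ with $I_3$ is loose: $I_3$ involves $u_\varepsilon\cdot\nabla w_\varepsilon\,w$ and genuinely needs the (SNS) energy inequality, whereas your term is quadratic in the difference and closes by Gronwall with coefficients like $\|\Delta v\|_2^2$ in $L^1_t$; it works, but the mechanism is different. Net: your argument proves the estimate, at the cost of more work than the paper's citation and, unless amended as above, a constant depending on $\|v_0\|_{H^2}$.
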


\begin{proof}
This is a direction consequence of Proposition \ref{prop}.
\end{proof}

The first order energy estimate is stated in the following proposition.

\begin{proposition}[$H^1$ energy estimates]
  \label{firstdiff}
There exists a positive constant $\delta_0$ depending only on $L_1$ and $L_2$, such that, the following estimate holds
  \begin{align*}
    &\sup_{0\leq s\leq t}(\|\nabla V_\varepsilon\|_2^2+\varepsilon^2\|\nabla W_\varepsilon\|_2^2)+\int_0^t(\|\Delta V_\varepsilon\|_2^2+\varepsilon^2\|\Delta W_\varepsilon \|_2^2) ds\\
    \leq&C\varepsilon^2e^{C(1+\varepsilon^4)\int_0^t\|\Delta  v\|_2^2\|\nabla\Delta  v\|_2^2ds}\int_0^t(1+\|\Delta  v\|_2^2) (\|\nabla\partial_t  v\|_2^2+\|\nabla\Delta  v\|_2^2)ds,
  \end{align*}
  for any $t\in[0,T_\varepsilon^*)$, as long as
  $$
  \sup_{0\leq s\leq t}(\|\nabla V_\varepsilon\|_2^2+\varepsilon^2\|\nabla W_\varepsilon\|_2^2)\leq\delta_0^2,
  $$
  where $C$ is a positive constant depending only on $L_1$ and $L_2$.
\end{proposition}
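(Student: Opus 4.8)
The plan is to perform an $H^1$ energy estimate on the difference system \eqref{DIFF-1}--\eqref{DIFF-3} under the smallness bootstrap hypothesis $\sup_{0\le s\le t}(\|\nabla V_\varepsilon\|_2^2+\varepsilon^2\|\nabla W_\varepsilon\|_2^2)\le\delta_0^2$. First I would test \eqref{DIFF-1} with $-\Delta V_\varepsilon$ and $\varepsilon^{-2}$ times \eqref{DIFF-3} with $-\Delta W_\varepsilon$ (equivalently, take the $L^2$ inner product of the vector system with $-\Delta U_\varepsilon$); after integration by parts, using \eqref{DIFF-2} to eliminate the pressure terms, one obtains
\[
\frac12\frac{d}{dt}\bigl(\|\nabla V_\varepsilon\|_2^2+\varepsilon^2\|\nabla W_\varepsilon\|_2^2\bigr)+\|\Delta V_\varepsilon\|_2^2+\varepsilon^2\|\Delta W_\varepsilon\|_2^2=\sum_k R_k,
\]
where the $R_k$ collect: the nonlinear self-interaction $(U_\varepsilon\cdot\nabla)U_\varepsilon$; the cross terms $(u\cdot\nabla)U_\varepsilon$ and $(U_\varepsilon\cdot\nabla)u$; and the $O(\varepsilon^2)$ forcing $-\varepsilon^2(\partial_tw+u\cdot\nabla w-\Delta w)$ paired against $-\Delta W_\varepsilon$. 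The key structural observation, emphasized in the introduction, is that $W_\varepsilon$ carries no $\varepsilon$-independent dynamical information, so every occurrence of $W_\varepsilon$ (in particular the dangerous $W_\varepsilon\partial_z V_\varepsilon$ term) must be handled by writing $W_\varepsilon(x,y,z,t)=-\int_0^z\nabla_H\cdot V_\varepsilon(x,y,z',t)\,dz'$ and applying the anisotropic Ladyzhenskaya-type inequality, Lemma \ref{LADTYPE}, together with the Poincaré inequality.

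The bulk of the work is estimating the $R_k$. For the cubic term $\int(U_\varepsilon\cdot\nabla)V_\varepsilon\cdot\Delta V_\varepsilon$, splitting $U_\varepsilon=(V_\varepsilon,W_\varepsilon)$ and substituting the vertical-integral formula for $W_\varepsilon$, Lemma \ref{LADTYPE} yields bounds of the form $C\|\nabla V_\varepsilon\|_2\|\Delta V_\varepsilon\|_2^2$ up to lower-order factors; since $\|\nabla V_\varepsilon\|_2\le\delta_0$, choosing $\delta_0$ small absorbs these into the dissipation $\|\Delta V_\varepsilon\|_2^2$. The mixed terms involving $u$ are linear in $U_\varepsilon$, so they produce factors of $\|\nabla v\|_2$, $\|\Delta v\|_2$, $\|\nabla\Delta v\|_2$ (all controlled globally by Corollary \ref{cor}, since $v_0\in H^2$) times the $U_\varepsilon$-quantities; via Cauchy--Schwarz one peels off a small multiple of $\|\Delta V_\varepsilon\|_2^2+\varepsilon^2\|\Delta W_\varepsilon\|_2^2$ and is left with a Gronwall coefficient of the shape $C(1+\varepsilon^4)\|\Delta v\|_2^2\|\nabla\Delta v\|_2^2$ multiplying $\|\nabla V_\varepsilon\|_2^2+\varepsilon^2\|\nabla W_\varepsilon\|_2^2$. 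Finally the forcing term is bounded, after Cauchy--Schwarz, by $\tfrac12\varepsilon^2\|\Delta W_\varepsilon\|_2^2+C\varepsilon^2(1+\|\Delta v\|_2^2)(\|\nabla\partial_t v\|_2^2+\|\nabla\Delta v\|_2^2)$, which is exactly the inhomogeneous term appearing in the claimed estimate; note its time integral is finite by Corollary \ref{cor}. Recalling that $(V_\varepsilon,W_\varepsilon)|_{t=0}=0$, Gronwall's inequality then gives the stated bound.

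The main obstacle is the delicate absorption in the cubic and vertical-transport terms: one must verify that the anisotropic estimates genuinely distribute the derivatives so that only one full $\|\Delta V_\varepsilon\|_2$ (rather than $\|\Delta V_\varepsilon\|_2^{3/2}$ or worse) appears, and that the prefactor is $\|\nabla V_\varepsilon\|_2$ — which the smallness hypothesis controls — rather than something that is merely bounded but not small. A secondary technical point is the pressure: applying $-\Delta$ and integrating by parts, the terms $\int\nabla_H P_\varepsilon\cdot(-\Delta V_\varepsilon)$ and $\varepsilon^{-2}\int\partial_z P_\varepsilon(-\Delta W_\varepsilon)$ must be shown to cancel against each other after an integration by parts using $\nabla_H\cdot V_\varepsilon+\partial_z W_\varepsilon=0$, exactly as in the usual Navier--Stokes $H^1$ estimate; this is routine but should be stated carefully because of the $\varepsilon^2$ weight. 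Once $\delta_0$ is fixed (depending only on $L_1,L_2$ through the constants in Lemma \ref{LADTYPE} and the Poincaré inequality), the remaining computation is a standard Gronwall argument.
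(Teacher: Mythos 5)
Your proposal follows essentially the same route as the paper: test \eqref{DIFF-1} with $-\Delta V_\varepsilon$ and \eqref{DIFF-3} with $-\Delta W_\varepsilon$, handle every occurrence of $W_\varepsilon$ through $W_\varepsilon=-\int_0^z\nabla_H\cdot V_\varepsilon\,dz'$ via the anisotropic Lemma \ref{LADTYPE} (packaged in the paper as Lemma \ref{ladlem}), absorb the cubic contribution $C\|\nabla V_\varepsilon\|_2\|\Delta V_\varepsilon\|_2^2$ using the bootstrap bound $\|\nabla V_\varepsilon\|_2\le\delta_0$ with $\delta_0$ fixed by the constants in those lemmas, and close with Gronwall; your Gronwall coefficient $C(1+\varepsilon^4)\|\Delta v\|_2^2\|\nabla\Delta v\|_2^2$ and your bound on the forcing term match the paper's exactly.

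One correction: you should test \eqref{DIFF-3} \emph{as written} (keeping the $\varepsilon^2$ prefactor) against $-\Delta W_\varepsilon$, not ``$\varepsilon^{-2}$ times \eqref{DIFF-3}''. With your stated normalization the pressure contributions are $\int\nabla_HP_\varepsilon\cdot(-\Delta V_\varepsilon)$ and $\varepsilon^{-2}\int\partial_zP_\varepsilon(-\Delta W_\varepsilon)$, which do \emph{not} cancel: the cancellation rests on $\int_\Omega P_\varepsilon\,\Delta(\nabla_H\cdot V_\varepsilon+\partial_zW_\varepsilon)\,dxdydz=0$ and therefore requires equal weights on the two pressure terms. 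Moreover, dividing by $\varepsilon^{2}$ would make the forcing $-(\partial_tw+u\cdot\nabla w-\Delta w)$ enter at order $O(1)$ rather than $O(\varepsilon^2)$, destroying the smallness that drives the whole estimate. The energy identity you actually display (with the weights $\varepsilon^2\|\nabla W_\varepsilon\|_2^2$, $\varepsilon^2\|\Delta W_\varepsilon\|_2^2$ and the $O(\varepsilon^2)$ forcing) is the one corresponding to the correct normalization, so the remainder of your argument goes through unchanged once this slip is fixed.
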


\begin{proof}
For simplicity of the notations, we drop the subscript index
$\varepsilon$ of $(V_\varepsilon, W_\varepsilon)$ in the following
proof, in other words, we use $(V,W)$ to replace $(V_\varepsilon, W_\varepsilon)$.

Taking the $L^2(\Omega)$ inner products to equations (\ref{DIFF-1}) and (\ref{DIFF-3}) with $-\Delta V$ and $-\Delta W$, respectively, summing the resultants up and integration by parts yield
  \begin{align}
    &\frac12\frac{d}{dt}(\|\nabla V\|_2^2+\|\varepsilon\nabla W\|_2^2)+\|\Delta V\|_2^2+\|\varepsilon\Delta W\|_2^2\nonumber\\
    =&\int_\Omega[(U\cdot\nabla)V+(  u\cdot\nabla)V+(U\cdot\nabla)  v]\cdot\Delta Vdxdydz\nonumber\\
    &+\varepsilon^2\int_\Omega(U\cdot\nabla W+  u\cdot\nabla W+U\cdot\nabla  w)\Delta Wdxdydz\nonumber\\
    &+\varepsilon^2\int_\Omega(\partial_t  w+  u\cdot\nabla  w-\Delta  w)\Delta Wdxdydz. \label{bd2}
  \end{align}

  We are going to estimate the terms on the right-hand side of (\ref{bd2}).
  First, by Lemma \ref{ladlem}, it follows from the Young and Poincar\'e inequalities that
  \begin{align}
    &\int_\Omega[(U\cdot\nabla)V+(  u\cdot\nabla)V+(U\cdot\nabla)  v]\cdot\Delta Vdxdydz\nonumber\\
    \leq&C(\|\nabla V\|_2\|\Delta V\|_2+\|\nabla  v\|_2^{\frac12}\|\Delta  v\|_2^{\frac12}\|\nabla V\|_2^{\frac12}\|\Delta V\|_2^{\frac12})\|\Delta V\|_2\nonumber\\
    \leq&\frac{1}{10}\|\Delta V\|_2^2+C(\|\nabla V\|_2^2\|\Delta V\|_2^2+\|\nabla  v\|_2^2\|\Delta  v\|_2^2\|\nabla V\|_2^2)\nonumber\\
    \leq&\frac{1}{10}\|\Delta V\|_2^2+C(\|\nabla V\|_2^2\|\Delta V\|_2^2+\|\Delta  v\|_2^2\|\nabla\Delta  v\|_2^2\|\nabla V\|_2^2),\label{bd2-1}
  \end{align}
  and
  \begin{align}
    &\varepsilon^2\int_\Omega(U\cdot\nabla W+  u\cdot\nabla W+U\cdot\nabla  w)\Delta Wdxdydz\nonumber\\
    \leq&C\varepsilon^2[(\|\nabla V\|_2^{\frac12}\|\Delta V\|_2^{\frac12}+\|\nabla  v\|_2^{\frac12}\|\Delta  v\|_2^{\frac12})\|\nabla W\|_2^{\frac12}\|\Delta W\|_2^{\frac12}\nonumber\\
    &+\|\nabla V\|_2^{\frac12}\|\Delta V\|_2^{\frac12}\|\nabla  w\|_2^{\frac12}\|\Delta  w\|_2^{\frac12}]\|\Delta W\|_2\nonumber\\
    \leq&\frac{1}{10}(\|\Delta V\|_2^2+\|\varepsilon\Delta W\|_2^2)+C(\|\nabla V\|_2^2\|\Delta V\|_2^2+\|\varepsilon\nabla W\|_2^2\|\varepsilon\Delta W\|_2^2)\nonumber\\
    &+C\|\nabla  v\|_2^2\|\Delta  v\|_2^2\|\varepsilon\nabla W\|_2^2+C\varepsilon^4\|\nabla  w\|_2^2\|\Delta  w\|_2^2\|\nabla V\|_2^2\nonumber\\
    \leq&\frac{1}{10}(\|\Delta V\|_2^2+\|\varepsilon\Delta W\|_2^2)+C (\|\nabla V\|_2^2+\|\varepsilon\nabla W\|_2^2)\nonumber\\
    &\times[\|\Delta V\|_2^2+\|\varepsilon\Delta W\|_2^2+(1+\varepsilon^4)\|\Delta  v\|_2^2\|\nabla\Delta  v\|_2^2],\label{bd2-2}
  \end{align}
  where in the last step we have used the fact that
  $$
  \|\nabla  w\|_2\leq C\|\Delta  v\|_2,\quad\|\Delta  w\|_2\leq C\|\nabla\Delta  v\|_2,
  $$
  which can be easily verified by recalling $  w(x,y,z,t)=-\int_0^z\nabla_H\cdot  v(x,y,z',t)dz'$ and using the Poincar\'e inequality.
  Next, using again Lemma \ref{ladlem}, it follows from the H\"older, Young and Poincar\'e inequalities that
  \begin{align}
    &\varepsilon^2\int_\Omega(\partial_t  w+  u\cdot\nabla  w-\Delta  w)\Delta Wdxdydz\nonumber\\
    \leq&\varepsilon^2(\|\partial_t  w\|_2+\|\Delta  w\|_2)\|\Delta W\|_2+C\varepsilon^2\|\nabla  v\|_2^{\frac12}\|\Delta  v\|_2^{\frac12}\|\nabla  w\|_2^{\frac12}\|\Delta  w\|_2^{\frac12}\|\Delta W\|_2\nonumber\\
    \leq&\frac{1}{5}\|\varepsilon\Delta W\|_2^2+C\varepsilon^2(\|\partial_t  w\|_2^2+\|\Delta  w\|_2^2+\|\nabla  v\|_2^2\|\Delta  v\|_2^2+\|\nabla  w\|_2^2\|\Delta  w\|_2^2)\nonumber\\
    \leq&\frac{1}{5}\|\varepsilon\Delta W\|_2^2+C\varepsilon^2(\|\nabla\partial_t  v\|_2^2+\|\nabla\Delta  v\|_2^2+\|\Delta  v\|_2^2\|\nabla\Delta  v\|_2^2).\label{bd2-3}
  \end{align}

  Substituting the estiates (\ref{bd2-1})--(\ref{bd2-3}) into (\ref{bd2}) yields
  \begin{align*}
    &\frac12\frac{d}{dt}(\|\nabla V\|_2^2+\|\varepsilon\nabla W\|_2^2)+\frac35(\|\Delta V\|_2^2+\|\varepsilon\Delta W\|_2^2)\nonumber\\
    \leq&C_1 (\|\nabla V\|_2^2+\|\varepsilon\nabla W\|_2^2)
     [\|\Delta V\|_2^2+\|\varepsilon\Delta W\|_2^2+(1+\varepsilon^4)\|\Delta  v\|_2^2\|\nabla\Delta  v\|_2^2]\nonumber\\
     &+C_1\varepsilon^2(1+\|\Delta  v\|_2^2)(\|\nabla\partial_t  v\|_2^2+\|\nabla\Delta  v\|_2^2),
  \end{align*}
  for a positive constant $C_1$ depending only on $L_1$ and $L_2$.

  By the assumption $\sup_{0\leq s\leq t}(\|\nabla V\|_2^2+\|\varepsilon\nabla W\|_2^2)\leq\delta_0^2$. Choosing $\delta_0=\sqrt{\frac{1}{10C_1}}$, it follows from the above inequality that
  \begin{align*}
    &\frac{d}{dt}(\|\nabla V\|_2^2+\|\varepsilon\nabla W\|_2^2)+\|\Delta V\|_2^2+\|\varepsilon\Delta W\|_2^2\nonumber\\
    \leq&2C_1 (1+\varepsilon^4)
    \|\Delta  v\|_2^2\|\nabla\Delta  v\|_2^2(\|\nabla V\|_2^2+\|\varepsilon\nabla W\|_2^2)\nonumber\\
     &+2C_1\varepsilon^2(1+\|\Delta  v\|_2^2)(\|\nabla\partial_t  v\|_2^2+\|\nabla\Delta  v\|_2^2),
  \end{align*}
  from which, recalling $(V,W)|_{t=0}=0$, it follows from the Gronwall inequality that
  \begin{align*}
    &\sup_{0\leq s\leq t}(\|\nabla V\|_2^2+\varepsilon^2\|\nabla W\|_2^2)+\int_0^t(\|\Delta V\|_2^2+\varepsilon^2\|\Delta W\|_2^2) ds\\
    \leq&2C_1\varepsilon^2e^{2C_1(1+\varepsilon^4)\int_0^t\|\Delta  v\|_2^2\|\nabla\Delta  v\|_2^2ds}\int_0^t(1+\|\Delta  v\|_2^2) (\|\nabla\partial_t  v\|_2^2+\|\nabla\Delta  v\|_2^2)ds,
  \end{align*}
  proving the conclusion.
\end{proof}

Thanks to Propositions \ref{basicdiff}--\ref{firstdiff}, as well as Corollary \ref{cor}, we can prove the following:

\begin{proposition}
  \label{diffest}
There is a positive constant $\varepsilon_0$ depending only on $\|v_0\|_{H^2}, L_1$ and $L_2$, such that for any $\varepsilon\in(0,\varepsilon_0)$, there is a unique global strong solution $(v_\varepsilon, w_\varepsilon)$ to (SNS), subject to
(\ref{bc})--(\ref{sc}). Moreover, the following estimate holds
$$
\sup_{0\leq t<\infty}(\|V_\varepsilon\|_{H^1}^2+\varepsilon^2 \|W_\varepsilon\|_{H^1}^2) +\int_0^\infty(\|\nabla V_\varepsilon\|_{H^1}^2+\varepsilon^2\|\nabla W_\varepsilon\|_{H^1}^2)dt\leq C\varepsilon^2,
$$
where $C$ is a positive constant depending only on $\|v_0\|_{H^2}$, $L_1$ and $L_2$.
\end{proposition}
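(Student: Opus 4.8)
The plan is to combine the two a priori estimates from Propositions \ref{basicdiff} and \ref{firstdiff} with a standard continuation (bootstrap) argument to upgrade the local strong solution of (SNS) to a global one, for $\varepsilon$ small. First, recall from Corollary \ref{cor}(ii) that, since $v_0\in H^2(\Omega)$, the quantities
$$
\int_0^\infty\|\Delta v\|_2^2\|\nabla\Delta v\|_2^2\,ds,\qquad \int_0^\infty(1+\|\Delta v\|_2^2)(\|\nabla\partial_t v\|_2^2+\|\nabla\Delta v\|_2^2)\,ds
$$
are both finite and bounded by a constant $C$ depending only on $\|v_0\|_{H^2}$, $L_1$, $L_2$; denote these finite bounds by $A$ and $B$ respectively. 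Then the conclusion of Proposition \ref{firstdiff}, valid on any interval $[0,t]\subset[0,T_\varepsilon^*)$ on which the smallness hypothesis $\sup_{0\le s\le t}(\|\nabla V_\varepsilon\|_2^2+\varepsilon^2\|\nabla W_\varepsilon\|_2^2)\le\delta_0^2$ holds, gives
$$
\sup_{0\le s\le t}(\|\nabla V_\varepsilon\|_2^2+\varepsilon^2\|\nabla W_\varepsilon\|_2^2)+\int_0^t(\|\Delta V_\varepsilon\|_2^2+\varepsilon^2\|\Delta W_\varepsilon\|_2^2)\,ds\le C\varepsilon^2 e^{C(1+\varepsilon^4)A}B\le C\varepsilon^2,
$$
for a constant $C=C(\|v_0\|_{H^2},L_1,L_2)$ (taking $\varepsilon\le1$ to absorb $\varepsilon^4$).

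The core of the argument is the continuity/bootstrap step. Define
$$
T_\varepsilon:=\sup\Big\{t\in[0,T_\varepsilon^*)\ :\ \sup_{0\le s\le t}(\|\nabla V_\varepsilon\|_2^2+\varepsilon^2\|\nabla W_\varepsilon\|_2^2)\le\delta_0^2\Big\}.
$$
Since $(V_\varepsilon,W_\varepsilon)|_{t=0}=0$ and $t\mapsto(\|\nabla V_\varepsilon\|_2^2+\varepsilon^2\|\nabla W_\varepsilon\|_2^2)(t)$ is continuous (by the smoothing of (SNS) for $t>0$ and the strong-solution regularity, as noted after \eqref{DIFF-3}), we have $T_\varepsilon>0$. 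On $[0,T_\varepsilon)$ the smallness hypothesis of Proposition \ref{firstdiff} is met, so the displayed $H^1$ estimate yields $\sup_{0\le s<T_\varepsilon}(\|\nabla V_\varepsilon\|_2^2+\varepsilon^2\|\nabla W_\varepsilon\|_2^2)\le C\varepsilon^2$. Now choose
$$
\varepsilon_0:=\min\Big\{1,\ \tfrac{\delta_0}{\sqrt{2C}}\Big\},
$$
depending only on $\|v_0\|_{H^2}$, $L_1$, $L_2$ (via $C$) and on $L_1,L_2$ (via $\delta_0$). For $\varepsilon\in(0,\varepsilon_0)$ we then get $\sup_{0\le s<T_\varepsilon}(\|\nabla V_\varepsilon\|_2^2+\varepsilon^2\|\nabla W_\varepsilon\|_2^2)\le C\varepsilon^2\le\tfrac12\delta_0^2<\delta_0^2$, strictly below the threshold. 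A standard contradiction argument shows this forces $T_\varepsilon=T_\varepsilon^*$: if $T_\varepsilon<T_\varepsilon^*$, then by continuity the quantity would still be $\le\delta_0^2$ slightly beyond $T_\varepsilon$, contradicting maximality. Moreover, since on $[0,T_\varepsilon^*)$ we now control $\|(V_\varepsilon,\varepsilon W_\varepsilon)\|_{H^1}$ (hence $\|(v_\varepsilon,\varepsilon w_\varepsilon)\|_{H^1}$, using Corollary \ref{cor} for the $H^1$ bound on $(v,w)$ and the Poincar\'e inequality) uniformly up to $T_\varepsilon^*$, the $H^1$ norm of $u_\varepsilon$ cannot blow up as $t\uparrow T_\varepsilon^*$; by the blow-up criterion for the local strong solution of (SNS) (same as for Navier--Stokes, cf.\ \cite{TEMAMNS,CONFO}), this means $T_\varepsilon^*=\infty$. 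Thus $(v_\varepsilon,w_\varepsilon)$ is global, and letting $t\to\infty$ in the $H^1$ estimate above—together with the basic $L^2$ estimate of Proposition \ref{basicdiff}, which supplies the $\|(V_\varepsilon,\varepsilon W_\varepsilon)\|_2$ part and, bounded as $C\varepsilon^2(\|v_0\|_2^2+\varepsilon^2\|w_0\|_2^2+1)^2\le C\varepsilon^2$ for $\varepsilon\le1$, is of the same order—gives the stated global bound
$$
\sup_{0\le t<\infty}(\|V_\varepsilon\|_{H^1}^2+\varepsilon^2\|W_\varepsilon\|_{H^1}^2)+\int_0^\infty(\|\nabla V_\varepsilon\|_{H^1}^2+\varepsilon^2\|\nabla W_\varepsilon\|_{H^1}^2)\,dt\le C\varepsilon^2.
$$

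The main obstacle—already resolved by the preceding propositions—is the structural one flagged in the introduction: in Proposition \ref{firstdiff} one must avoid using the vertical momentum equation to control $W_\varepsilon$ independently of $\varepsilon$, and instead handle $W_\varepsilon\partial_z V_\varepsilon$ purely through $W_\varepsilon=-\int_0^z\nabla_H\cdot V_\varepsilon\,dz'$ together with the anisotropic Ladyzhenskaya-type bound of Lemma \ref{ladlem}; this is what makes the right-hand side genuinely $O(\varepsilon^2)$ and linear-in-smallness in $(\|\nabla V_\varepsilon\|_2^2+\varepsilon^2\|\nabla W_\varepsilon\|_2^2)$, so that the smallness of $\varepsilon$ (and the vanishing initial data) closes the estimate. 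Given that, the present proposition is essentially bookkeeping: assemble $A,B$ from Corollary \ref{cor}, fix $\delta_0$ as in Proposition \ref{firstdiff}, fix $\varepsilon_0$ so that $C\varepsilon_0^2\le\tfrac12\delta_0^2$, run the continuity argument to get global existence, and pass to the limit $t\to\infty$.
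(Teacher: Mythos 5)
Your proposal is correct and follows essentially the same route as the paper: the basic $L^2$ bound from Proposition \ref{basicdiff}, the conditional $H^1$ bound from Proposition \ref{firstdiff} combined with Corollary \ref{cor} to make its right-hand side a uniform $C\varepsilon^2$, the choice of $\varepsilon_0$ so that $C\varepsilon_0^2$ falls strictly below the threshold $\delta_0^2$, the continuity/maximality argument forcing the smallness to persist up to $T_\varepsilon^*$, and the extension of the local strong solution to a global one via the resulting uniform $H^1$ control. The only cosmetic difference is that you spell out the blow-up criterion and the continuity of $t\mapsto\|\nabla V_\varepsilon\|_2^2+\varepsilon^2\|\nabla W_\varepsilon\|_2^2$ explicitly, which the paper leaves implicit.
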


\begin{proof}
Recall that $T_\varepsilon^*$ is the maximal existence time of the strong solutions $(v_\varepsilon, w_\varepsilon)$ to (SNS), subject to the boundary and initial conditions (\ref{bc})--(\ref{sc}). By Corollary \ref{cor} and Proposition \ref{basicdiff}, we have the estimate
\begin{equation}
\sup_{0\leq t<T_\varepsilon^*}(\|V_\varepsilon \|_2^2+\varepsilon^2\|W_\varepsilon\|_2^2)+\int_0^{T_\varepsilon^*} (\|\nabla V_\varepsilon\|_2^2+\varepsilon^2\|\nabla W_\varepsilon\|_2^2)dt\leq K_1\varepsilon^2, \label{est1}
\end{equation}
where $K_1$ is a positive constant depending only on $\|v_0\|_{H^1}$, $L_1$ and $L_2$.

Let $\delta_0$ be the constant in Proposition \ref{firstdiff}, which depends only on $L_1$ and $L_2$. Define
$$
t_\varepsilon^*:=\sup\left\{t\in(0,T_\varepsilon^*)~\bigg|~\sup_{0\leq s\leq t}
(\|\nabla V_\varepsilon\|_2^2+\varepsilon^2\|\nabla W_\varepsilon\|_2^2)\leq \delta_0^2\right\}.
$$
By Proposition \ref{firstdiff} and Corollary \ref{cor}, we have the estimate
\begin{equation}\label{est2}
\sup_{0\leq s\leq t}(\|\nabla V_\varepsilon\|_2^2+\varepsilon^2\|\nabla W_\varepsilon\|_2^2)+\int_0^t(\|\Delta V_\varepsilon\|_2^2+\varepsilon^2\|\Delta W_\varepsilon\|_2^2)ds\leq K_2 \varepsilon^2,
\end{equation}
for any $t\in[0,t_\varepsilon^*)$, where $K_2$ is a positive constant depending only on $\|v_0\|_{H^2}$, $L_1$ and $L_2$. Setting $\varepsilon_0=\sqrt{\frac{\delta_0}{2K_2}}$, then the above inequality implies
$$
\sup_{0\leq s\leq t}(\|\nabla V_\varepsilon\|_2^2+\varepsilon^2\|\nabla W_\varepsilon\|_2^2)+\int_0^t(\|\Delta V_\varepsilon\|_2^2+\varepsilon^2\|\Delta W_\varepsilon\|_2^2)ds\leq \frac{\delta_0}{2},
$$
for any $\varepsilon\in(0,\varepsilon_0)$, and for any $t\in[0,t_\varepsilon^*)$, which, in particular, gives
$$
\sup_{0\leq t<t_\varepsilon^*}(\|\nabla V_\varepsilon\|_2^2+\varepsilon^2\|\nabla W_\varepsilon\|_2^2)\leq \frac{\delta_0}{2}.
$$
Thus, by the definition of $t_\varepsilon^*$, we must have
$t_\varepsilon^*=T_\varepsilon^*$. Thanks to this, it is clear that
(\ref{est2}) holds for any $t\in[0, T_\varepsilon^*)$.

We
claim that it must has $T_\varepsilon^*=\infty$, otherwise, if
$T_\varepsilon^*<\infty$, then, recalling that (\ref{est2}) holds for any $t\in[0, T_\varepsilon^*)$,
by the local well-posedness result of the
(SNS), one can extend the strong solution $(v_\varepsilon,
w_\varepsilon)$ beyond $T_\varepsilon^*$, which contradicts to the
definition of $T_\varepsilon^*$. Therefore, the conclusion follows by combining (\ref{est1}) with (\ref{est2}).
\end{proof}

Based on Proposition \ref{diffest}, we can now give the proof of Theorem \ref{thm} as follows:

\begin{proof}[\textbf{Proof of Theorem \ref{thm}}]
Let $\varepsilon_0$ be the constant in Proposition \ref{diffest}, which
depends only on $\|v_0\|_{H^2}$, $L_1$ and $L_2$. Then, by
Proposition \ref{diffest}, for any $\varepsilon\in(0,\varepsilon_0)$,
there is a unique global strong solution $(v_\varepsilon, w_\varepsilon)$ to (SNS), subject to the boundary and initial
conditions (\ref{bc})--(\ref{sc}). Moreover, the following
estimate holds
$$
\sup_{0\leq t<\infty}(\|V_\varepsilon\|_{H^1}^2+\varepsilon^2 \|W_\varepsilon\|_{H^1}^2) +\int_0^\infty(\|\nabla V_\varepsilon\|_{H^1}^2+\varepsilon^2\|\nabla W_\varepsilon\|_{H^1}^2)dt\leq C\varepsilon^2,
$$
where $(V_\varepsilon, W_\varepsilon)=(v_\varepsilon, w_\varepsilon)-(v,w)$, and $C$ is a positive constant depending
only on $\|v_0\|_{H^2}$, $L_1$ and $L_2$. This proves the estimates
stated in the theorem, while the strong convergencs stated there are
just the direct corollaries of this estimate. This completes the proof
of Theorem \ref{thm}.
\end{proof}

\section*{Acknowledgments}
{The work of J.L. is supported in part by the Direct Grant for Research 2016/2017 (Project Code: 4053216) from The Chinese University of Hong Kong. The work of Edriss S.T. is supported in part by the ONR grant N00014-15-1-2333.}
\par

\end{document}